\newcommand{\wve}[1]{\textcolor{dpurple}{Wouter: #1}}
\newcommand{\R}{\mathbb{R}}
\newcommand{\pr}{\mathbb{P}}
\newcommand{\Z}{\mathbb{Z}}
\newtheoremstyle{quoted}     
  {}{}                       
  {\itshape}                 
  {}                         
  {\bfseries}                
  {}                         
  { }                        
  {``#1 #2''\thmnote{ (#3).}} 
\theoremstyle{quoted}
\title{\vspace{-1in}\huge
A Computational Method for Solving the Stochastic Joint Replenishment Problem in High Dimensions}
\author{
\normalsize Barış Ata \vspace{-1mm}\\ 
\scriptsize Booth School of Business, University of Chicago, \href{Baris.Ata@chicagobooth.edu}{Baris.Ata@chicagobooth.edu}   \vspace{.5mm} \\ 
\normalsize Wouter J.E.C. van Eekelen \vspace{-1mm}\\ 
\scriptsize Booth School of Business, University of Chicago, \href{Wouter.vanEekelen@chicagobooth.edu}{Wouter.vanEekelen@chicagobooth.edu}   \vspace{.5mm} \\ 
\normalsize Yuan Zhong \vspace{-1mm}\\ 
\scriptsize Booth School of Business, University of Chicago, \href{Yuan.Zhong@chicagobooth.edu}{Yuan.Zhong@chicagobooth.edu}   \vspace{.5mm} \\ 
} 
\date{\small\today}
\begin{document}


\maketitle






   

\section{Introduction}\label{sec:intro}

The joint replenishment problem (JRP) concerns the management of multi-item inventory systems,
in which a fixed replenishment cost is incurred jointly by items procured together. 
A JRP is called stochastic (SJRP) when demands are random. 
A classical problem in operations management, the JRP has been extensively studied over several decades with wide-ranging applications; 
see, for example, the literature surveys in \cite{goyal1989joint}, \cite{aksoy1988multi}, \cite{khouja2008review} and \cite{creemers2022joint}.
In particular, the SJRP is  relevant to sectors involving large-scale operations 
with highly variable demand and vast product assortments, such as modern retail, 
where joint replenishment can amortize fixed costs such as transportation, and yield 
substantial economies of scale.

While coordinated replenishment offers attractive cost-saving opportunities, 
effective deployment in practice is challenging, especially when the number of distinct items, i.e., the problem dimension, is large. 
A fundamental reason is that item-level inventory decisions become interdependent under nonzero fixed replenishment costs. 
This interdependence creates two difficulties: (i) the computational effort required to solve the associated Markov decision process (MDP) 
scales with the sizes of the inventory state and decision spaces, 
both of which grow exponentially with dimension, leading to the ``curse of dimensionality''; 
and (ii) fixed costs induce large multi-item orders, producing jumps in the inventory state and amplifying dimensionality effects.
As a result, there has been limited progress in the literature on obtaining optimal replenishment policies in the multidimensional setting; 
see, for example, Section~6 of \cite{perera2023survey2} for a review of related literature.
Instead, a substantial body of literature has focused on designing effective heuristics for the SJRP; see, for example, \cite{khouja2008review}.
These heuristics typically propose intuitively appealing classes of {parameterized inventory control policies, 
reducing the control problem to the optimization of the policy parameters.}

In this paper, we revisit the classical SJRP \citep{goyal1989joint}, which is formulated in discrete time,
and develop an effective computational method for the high-dimensional setting.
Toward this end, we consider a closely related continuous-time impulse control problem, 
in which instantaneous jumps in the system state can be {effected} at a fixed cost, 
and whose solution naturally leads to implementable policies for the original inventory control problem. 
Traditional numerical approaches to impulse control problems use grid-based methods, 
such as finite difference and finite element schemes, 
to solve the associated Hamilton-Jacobi-Bellman (HJB) partial differential equations (PDEs), 
but they also suffer from the curse of dimensionality. 
{Compared to the discrete-time SJRP, the continuous-time impulse control formulation has a key advantage: 
It admits an equivalent probabilistic representation of the associated value function \citep{kharroubi2010backward}. 
This representation allows us to employ the deep backward stochastic differential equation (BSDE) method 
developed in the seminal work of \cite{han2018solving} for tackling high-dimensional problems.
Since \cite{han2018solving}, a large literature has used deep neural networks to solve high-dimensional PDEs following the deep BSDE framework.}
We build on and contribute to this literature by developing a computational
method for solving the high-dimensional impulse control problem. More importantly, we leverage that
solution to propose effective control policies for the classical SJRP.

To put our method in perspective, we briefly review the deep BSDE framework, 
highlight our main challenges, and discuss the innovations we introduce to address them.
Deep BSDE is a simulation-based computational approach for solving high-dimensional PDEs using deep neural networks.
It starts from a probabilistic (BSDE) representation of the target function, 
typically a stochastic equality satisfied almost surely {(a.s.)} along sample paths of a {\em reference process}, 
which is a simulatable random process chosen by the algorithm designer.
Based on this representation, one constructs a loss function and trains neural network approximations 
for the target function (and its gradient) by minimizing the loss, using samples generated by simulating the reference process.
When the representation is a stochastic equality, the loss can be taken as the standard mean squared error between the left- and right-hand sides, 
with the function and gradient replaced by their neural network approximations, respectively.
Under appropriate choices of the loss and the reference process, the success of the deep BSDE method then hinges primarily on extensive engineering efforts in hyperparameter tuning and neural network training.

For our impulse control problem, two immediate challenges arise in designing the loss and the reference process within the deep BSDE framework. 
First, while an equivalent probabilistic representation exists, 
it takes the form of a family of almost-sure stochastic inequalities rather than an equality \citep{kharroubi2010backward}. 
To address this more complex case, 
we design a novel, nonstandard loss that 
{dualizes inequality constraints by incorporating 
a loss term for violations of the stochastic inequality and a tunable penalty parameter.}
The penalty parameter introduces considerable complications for neural network training, 
as its tuning schedule must be carefully coupled with the learning rate schedule to ensure training stability. 
Second, since the system state can be instantaneously displaced, 
deriving an effective control policy from the trained neural networks 
demands more than accurate local approximations of the value function and its gradient, 
as is common in prior literature (see, e.g., \cite{han2018solving,ata2023drift,ata2023dynamic}); 
rather, the value function and its gradient must be well approximated across an expansive region of the state space. 
This, in turn, requires the reference process to explore the state space broadly. 
Indiscriminate exploration, however, yields irrelevant samples that hinder training. 
Thus, the reference process must strike an effective balance between exploration and exploitation; 
although this trade-off has been considered in prior work, 
it is substantially more prominent and nuanced in the context of impulse control. Toward this end, 
we design several sophisticated reference processes with carefully designed state-dependent jumps
to ensure that relevant samples are generated for efficient training.

As mentioned earlier, our neural network solution to the impulse control problem naturally yields 
implementable inventory control policies for the original (discrete-time) formulation of the SJRP. 
We demonstrate the effectiveness of these policies through a series of test problems in the original SJRP setting.
For low-dimensional test problems, we compare our policies primarily to the optimal ones derived by solving the MDPs associated with the original inventory control problem. 
{We also compare our policy with the best available heuristics. Across settings, the performance of the proposed policy remains consistently within 1\% of the optimal policy. It also matches or outperforms the best-performing heuristic. Moreover, the policy itself closely tracks the optimal policy (see Figures \ref{fig:mmm_case} and \ref{fig:comparison_pane}). For the high-dimensional test problems, however, solving the underlying MDP is computationally intractable. Thus,} we benchmark our approach against the best available heuristic policies. For the high-dimensional test problems considered, our policy matches or beats the best {performing heuristic}. 

The rest of the paper is structured as follows. Section~\ref{sec:litreview} provides an overview of related literature. Section~\ref{sec:mathmodel} presents our model of the SJRP and its impulse control approximation. Section~\ref{sec:SDE} introduces {the HJB equation}, and {the aforementioned stochastic identity, Eq.~\eqref{eq:keyidentityprop}}. Section~\ref{sec:algorithm} describes our algorithm.
{Section~\ref{sec:testproblems_benchmarks} 
presents our test problems and the benchmark policies.  
Computational results are described in Section~\ref{sec:computational_results}.}
We conclude in Section~\ref{sec:conclusion} with some discussion 
on possible future research directions.
Appendices \ref{app:A}--\ref{app:D} contain further technical details, which include a {formal derivation} of the key stochastic identity, how the hyperparameters of our neural networks are tuned, 
a complete description of the benchmark policies, 
and the validation of our computational approach in a one-dimensional 
inventory model with fixed cost.

\section{Literature Review}\label{sec:litreview}
{We review the following three relevant streams of literature: (i) Joint replenishment problems, (ii) impulse control problems, 
and (iii) deep learning methods for solving high-dimensional PDEs.}

\textbf{Joint replenishment problems.} The JRP is a classical problem 
{in inventory management} that has {received considerable research attention} over the past few decades. Broadly speaking,  the JRP literature can be divided into problems with deterministic demand and those with stochastic demand \citep{goyal1989joint}. 
The deterministic JRP can be viewed as the most basic multi-item extension of the classical economic order quantity (EOQ) model \citep{harris1990many}. It assumes constant demand rates over an infinite planning horizon, with replenishment occurring at regular intervals. The objective is to determine the optimal frequency for replenishing the different items (see, e.g., \citealp{viswanathan1996new,kaspi1991economic,goyal1979note}).
{One class of effective policies is characterized by the power-of-two rule under which orders are placed at power-of-two multiples of a base period.}
\citealp{roundy198598,roundy198698,jackson1985joint} have shown that the best power-of-two policies can perform within {two to six} percent of optimality. {Much of the (deterministic) JRP literature focuses on the following cost structure: There is a group replenishment {fixed} cost, 
{which is incurred every time an order is placed,} 
and there are individual fixed costs for each item.} By contrast, the generalized JRP (GJRP) allows the shared fixed costs to depend on the given collection of items included in the replenishment \citep{federgruen1992joint}. \cite{adelman2005duality} prove the existence of optimal policies for the GJRP with capacity constraints using duality theory. \cite{adelman2012computing} use approximate dynamic programming to determine near-optimal policies in this setting.

Our paper, however, concerns the SJRP; {we now review this literature}. 
For the single-item problem with procurement fixed costs, 
\cite{scarf1960} {introduces the notion of $K$-convexity to show} that the optimal policy has the following simple structure: 
If the inventory level drops {to} or below a fixed level $s$, 
an order is placed to raise it back to another fixed level $S$. 
Efficient algorithms exist to find optimal parameters for such $(s,S)$ policies; 
see, for example, \cite{veinott1965computing,zheng1991finding}. 
{In a single-item variant involving advance demand information, \cite{gallego2001integrating} give a geometric characterization of a convexity notion that generalizes $K$-convexity and simplifies the classical optimality proofs of $(s,S)$ policies and their state-dependent generalizations.}
{Since Scarf's seminal work, there have been several attempts to identify effective structural properties 
of optimal policies for multi-item inventory models. 
We refer the readers to \cite{perera2023survey} for a detailed review of these works, 
and discuss only the most relevant literature. \cite{johnson1967optimality} considers an infinite-horizon, 
$d$-item SJRP in discrete time, with integer-valued demand and the inventory 
process living in {the state space $\Z^d$}. 
The demand in each time period is allowed to depend on the initial inventory levels in that period. 
The cost structure is identical to ours, i.e., there is a single fixed cost and linear, item-specific variable costs, 
whenever an order is placed. He considers both the total discounted cost and long-run average cost objectives and establishes the optimality of a so-called $(\sigma,S)$ policy under both objectives, 
where $\sigma$ is a subset of $\Z^d$, and $S$ is a vector in $\Z^d$. 
Under this policy, if the inventory state $x$ is in $\sigma $ and $x\leq S$ component-wise, 
then one orders up to the levels $S$, and does not order if $x\notin \sigma$. 
Note that the policy is not specified when $x \in \sigma$ and $x \not \leq S$, 
but if the initial inventory state $x\leq S$, then under a $(\sigma,S)$ policy, 
the inventory state always stays below $S$ (component-wise), so a $(\sigma,S)$ policy is optimal in this case. 
Furthermore, for the long-run average cost objective, even if the initial state $x\not \leq S$, 
an optimal policy can simply first wait for the inventory levels to deplete and drop below $S$, 
and then follow a $(\sigma, S)$ policy. \cite{kalin1980optimality} proves the optimality of $(\sigma,S)$ policies 
under conditions that are different from those in \cite{johnson1967optimality}, 
and characterizes the complement of $\sigma$, a set where no order is placed, 
as being increasing under a certain partial ordering. 
His proof makes use of the notion of $(K,\eta)$-quasiconvexity in $\R$, 
a generalization of the concept of quasiconvexity used in \cite{porteus1971optimality} and in \cite{schal1976optimality}, 
for various single-item inventory problems. 
\cite{gallego2005k} recover results in \cite{kalin1980optimality} using generalized notions of $K$-convexity in $\R^d$.
Even though the optimal policy structure is known when the initial inventory state $x\leq S$, 
determining the ordering region $\sigma$ and the order-up-to level vector $S$ is extremely challenging computationally, especially in high dimensions.}

{Due to the computational intractability, research on SJRP often focuses on classes of {parameterized} control policies that are intuitively appealing. 
Once attention is restricted to a particular class of control policies, the problem reduces to {optimizing the policy parameters}. This is often accomplished by numerical evaluations of the policy performances through simulation. See, for example,} \cite{atkins1988periodic,federgruen1984coordinated,ignall1969optimal,silver1965some,silver1981establishing,viswanathan1997note,balintfy1964basic}. For a {comprehensive review} of the various heuristic control policies, we refer to \cite{khouja2008review}.

{\bf Impulse control problems.} 
One year after Scarf established the optimality of the $(s,S)$ policy in discrete time, \cite{beckmann1961inventory} demonstrates that this policy is also optimal in continuous time with Poisson demand. To prove this, \cite{beckmann1961inventory} uses what is known as uniformization. This approach restricts ordering decisions to occur only at the jumps of the Poissonian demand, effectively reducing the problem to its discrete-time counterpart. 
To address more general demand models such as diffusion or mixed-diffusion processes, impulse control problems offer a flexible framework that can effectively capture the lump-sum ordering nature of continuous-time inventory control problems with fixed costs.  
{These stochastic control problems} allow the system controller to intervene through jumps in the underlying state space \citep{bensoussan1984impulse}. 
Several authors have used the associated HJB equations to prove the optimality of $(s,S)$--type policies for various one-dimensional inventory models; see, for example, \linebreak\cite{constantinides1976stochastic,harrison1983impulse, sulem1986solvable, bensoussan2005optimality, helmes2017continuous}. For applications of the one-dimensional impulse control problems in economics, we refer the reader to \cite{stokey2008economics}.

While numerous studies have investigated the continuous-time inventory model with fixed costs for single-item problems, only a handful of papers have addressed multi-item problems 
(\cite{perera2023survey}). A pioneering attempt to solve the multi-item setting is made by {\cite{sulem1986explicit}}, who considers a two-item inventory system with constant, deterministic demand rates. There are individual fixed costs for ordering a single item, and a fixed cost of ordering both items, which is lower than the sum of individual fixed costs. The variable costs are assumed to be zero. Using the associated HJB equation, she establishes the optimality of a $(\gamma, \Gamma)$ policy, where $\gamma$ represents the ordering boundary, and $\Gamma$ denotes the order-up-to level boundary, similar to the $(\sigma, S)$ policy in the discrete-time setting. \cite{li2022optimal} {also consider two-item inventory systems, and} extend {\cite{sulem1986explicit}} in several ways. First, they allow variable costs, and establish the optimality of a $(\gamma, \Gamma)$ policy in the deterministic setting with constant demand rates. 
They provide characterizations of the boundaries $\gamma$ and $\Gamma$ in this case, facilitating their numerical constructions. 
Second, they establish various theoretical properties of a stochastic model with diffusion demand, 
such as the $(K, \eta)$-quasiconvexity of the value function, and used the finite-difference scheme to obtain numerical solutions. Their numerical study suggests that the optimal policy in the stochastic case is also of the $(\gamma, \Gamma)$ type. 
{Note that their finite-difference scheme is 
not computationally feasible in high-dimensional settings, i.e., 
when the number of distinct items is large.}
Other common numerical methods for solving impulse control problems {include} iterated optimal stopping and policy iteration; see, e.g., \cite{azimzadeh2018convergence}, \cite{azimzadeh2016weakly}, and Chapter 12 in \cite{oksendal2019stochastic}. Unfortunately, these methods face the same {computational} tractability issues.

{\bf Deep learning methods for solving high-dimensional PDEs.} Deep learning has gained significant traction as a method for solving high-dimensional PDEs since the seminal papers \cite{han2017deep, han2018solving}.
Their method uses the probabilistic characterization of {semilinear} PDEs as backward stochastic differential equations (BSDEs); see, for example, \cite[Chapter~7]{yong1999stochastic}. The method outlined in \cite{han2018solving} proposes to discretize the BSDE forward in time using a standard Euler scheme, {and approximate} the value function and {its gradient} with neural networks. It then learns the value function and its gradient by optimizing the neural network parameters {through minimization of} the expected squared distance between the known terminal value and the discretized BSDE's terminal value. We refer to \cite{han2020convergence} for a detailed convergence study.
With the advent of high-performance GPU computing, these deep-learning methods have been successfully applied to solve high-dimensional PDEs; see \cite{beck2020overview,weinan2021algorithms,chessari2023numerical} for extensive literature reviews on the subject. 

\cite{ata2023dynamic} and \cite{ata2024singular,ata2023drift} introduced these techniques to the operations management domain.
The former studies a semilinear parabolic PDE linked to a drift control problem arising in dynamic call center scheduling {(also see \citealp{ata2026dynamic})}, while the latter two develop computational methods for solving control problems in stochastic processing networks in the heavy-traffic regime.
{\cite{atazhou2024} solves a sequential vehicle routing problem motivated by an eviction enforcement application using a similar approach. \cite{atalisi2024} studies dynamic control of a make-to-order manufacturing system with throughput time constraints in high dimensions using deep learning.}
\cite{ataxu2025} solves a high-dimensional dynamic stochastic matching problem building on the deep BSDE method. 
The preceding literature predominantly addresses drift-rate control problems
within the canonical deep BSDE setting,
where the value function satisfies an almost-sure equality
and the neural network training relies on a standard mean-squared loss.
In contrast, while our impulse control problem is also tackled within the broad deep BSDE framework,
it differs fundamentally: 
{Our approach is based on the framework of BSDEs with constrained jumps \citep{kharroubi2010backward}, 
which provides a probabilistic representation of the value function of impulse control problems 
as the minimal solution to a constrained BSDE --- a substantive methodological departure from the unconstrained BSDEs 
that underlie the canonical deep BSDE setting. Consequently,} the stochastic identity satisfied by the value function involves
an almost-sure inequality rather than an equality. 
This renders the standard mean-squared loss unsuitable, 
and necessitates a novel loss function penalizing violations of the inequality,   
deliberately crafted reference processes, 
and careful tuning of joint penalty-rate and learning-rate schedules;
see also the discussion in {Section~\ref{sec:algorithm}}. Relatedly,
our deep-learning framework is {inspired in part by the mathematical} finance literature \citep{buehler2019deep, biagini2023neural}, 
where deep learning is employed to identify optimal hedging strategies for pricing financial derivatives, 
problems often modeled by nonlinear PDEs. 
{In particular, the stochastic identity (Eq.~\eqref{eq:keyidentityprop}) on which our approach is based exploits a connection between impulse control problems and the so-called stochastic target problems, the latter of which have been studied extensively in the mathematical finance literature (see, e.g., \citealp[Chapter~7]{touzi2012optimal}).} 
{Finally, \cite{bayraktar2023neural} studies an optimal switching problem --- a problem class related to impulse control \citep{bouchard2009stochastic} --- in energy markets, 
building on the framework of \cite{hure2020deep}. Unlike our setting, theirs involves a finite set of modes and exogenous, 
rather than controller-induced, jumps in the state. 
They further establish error bounds for their approach, whereas we do not pursue such bounds here; 
see also \cite{germain2022approximation} and \cite{han2020convergence} for error analyses of other deep learning methods for solving PDEs.}

\section{Model}\label{sec:mathmodel}
Consider a warehouse that stocks and sells $d$ different items. 
The warehouse manager may place orders for any subset of these items. 
Define the order vector {$y=(y_1,\ldots,y_d)^\top$, where $y_i\geq0$ denotes the number of units of item $i$ ordered 
and the superscript $^\top$ denotes transpose,} 
and let $c(y)$ denote the total ordering cost, where
$$
c(y) = \begin{cases}
c_0 + \sum_{i=1}^d c_i y_i,\  &\text{if }  \sum_{i=1}^d y_i > 0,\\
0, &\text{ otherwise.}  \\
\end{cases}
$$
That is, the warehouse manager incurs a fixed cost $c_0$ for placing the order---for example, $c_0$ may represent the fixed cost of using a shared resource associated with an order such as the transportation cost incurred from using a truck---and $c_iy_i$ denotes the associated variable cost of ordering $y_i$ units of item $i$ for $i=1,\ldots,d$.

We let {$x=(x_1,\ldots,x_d)^\top$} denote a generic system state, where $x_i$ represents the inventory level of item $i$. 
The warehouse manager reviews the system state periodically at times $n=1,2,\ldots$. 
At the beginning of each period, upon observing the system state, but before observing demand, he makes an ordering decision; 
{it is possible that the manager decides not to place an order. 
If an order is placed, it is received immediately, i.e., the delivery lead time is zero.}
{Then, at the end of the time period,} demand is realized, and any unmet demand is backlogged. 
In addition to ordering costs, there are linear holding and backlog costs. 
To be specific, a penalty cost $p_i$ is incurred for each unit of backlogged demand {of item $i$}, 
and a holding cost $h_i$ is charged for each unit of {inventory of item $i$} carried over from
one period to the next. Thus, if we let
\begin{equation}
f_i(x_i)=\begin{cases}
h_ix_i, \ & x_i\geq0, \\
-p_ix_i, \ & x_i<0,
\end{cases}
\label{eq:inv_cost}
\end{equation}
then we have that
$$
f(x) = \sum_{i=1}^d f_i(x_i)
$$
is the per-period state cost {borne} by the system {manager} when the system state is $x$. All costs are incurred at the end of the period.

We now describe the system dynamics mathematically. First, let $X(n)$ denote the system state (i.e., the vector of inventory levels for items $1,\ldots,d$) at the end of period $n$. Second, let $\xi(n)$ denote the $d$--dimensional demand vector in period $n$. 
{We assume that $\{\xi(n)\}_{n \geq 1}$ are independent and identically distributed
nonnegative random vectors with mean vector $\mu \in \mathbb{R}^d$ and 
positive-definite covariance matrix $\Sigma \in \mathbb{R}^{d \times d}$.}
To facilitate future analysis, we define the cumulative demand {during the first $n$ periods as} 
\begin{equation}\label{eq:cumdemand}
D(n) = \sum_{k=1}^n \xi(k), \ n\geq1.
\end{equation}
Third, let $y(n)\geq0$ denote the order vector in period $n$. Then, the evolution of the system state can be expressed as
\begin{equation}\label{eq:inventorydynamics}
X(n) = X(n-1) + y(n)- \xi(n),\ n\geq1,
\end{equation}
where $X(0)$ denotes the initial system state.

Focusing on the infinite-horizon discounted cost criterion, the warehouse manager's problem is to choose a non-anticipating control (policy), denoted by $u=\{y(n), n\geq1\}$, consisting of order vectors for each period. 
The associated expected total discounted cost can be written as follows:
\begin{equation}\label{eq:MDPobjective1}
{\tilde J(x, u)}=\E_x\Big[\sum_{n=1}^{\infty} \gamma^{n-1} \left(f(X(n)))+c(y(n))\right)\Big],    
\end{equation}
where $\gamma\in(0,1)$ is the discount factor, and $\E_x[\cdot]$ denotes the expectation conditional on the initial state $X(0)=x$.

For every initial state $x$, we seek to find the optimal ordering policy that minimizes the total expected discounted cost \eqref{eq:MDPobjective1}. That is, we want to determine
\begin{equation}\label{eq:MDPvaluefunction}
{\tilde V(x)\coloneqq \inf_u \tilde J(x,u)},
\end{equation}
where the infimum is taken over the class of non-anticipating controls.
As mentioned earlier, it is computationally prohibitive to solve \eqref{eq:MDPvaluefunction} because the standard MDP-based techniques suffer from the curse of dimensionality. With this in mind, we next describe a continuous-time approximation of the foregoing problem. 

{\bf Impulse Control Approximation.}
We now formally derive an impulse control approximation to the preceding inventory control problem.
One can rewrite the stochastic dynamics of the inventory state process \eqref{eq:inventorydynamics} as
\begin{equation}\label{eq:discrete_process}
X(N) = x + \sum_{n=1}^N y(n)- D(N),\ N\geq1.
\end{equation}
The key simplification is to view the warehouse manager's problem in continuous time and allow him to place orders at any point in time. With slight abuse of notation, we define a new \emph{continuous-time} process
\begin{equation}\label{eq:continuous_process}
X(t) = x + \sum_{\{j : \tau_j \leq t\}} y^{(j)} - D(t),\ t\geq 0,
\end{equation}
where the time $t$ replaces $N$ in Eq.~\eqref{eq:discrete_process} 
{and $y^{(j)}$ represents the $j$th order placed at time $\tau_j$.}
In addition, we approximate the cumulative demand process $D(\cdot)$ by a Gaussian process. 
To be specific, we assume
\begin{equation}\label{eq:browniandemand}
D(t)= \mu t + \sigma B(t),\ t\geq0,
\end{equation}
where $B(\cdot)$ is a $d$--dimensional standard Brownian motion, $\mu \in \R^d$ is the drift vector, {and $\sigma \in \R^{d\times d}$ is chosen so that $\sigma \sigma^{\top} = \Sigma$, where $\Sigma$ is the covariance matrix of the per-period demand vector $\xi(n)$.} 

{This Gaussian approximation of the original demand process is obtained by matching its first two moments. 
Such Gaussian approximations are ubiquitous in the inventory management literature, e.g., the newsvendor model and its extensions, 
because they enhance tractability. Despite its benefits, this approach introduces a potential approximation error. 
Therefore, once we derive the proposed policy under this approximation, we test its performance in the context of the {discrete-time} formulation 
using the original demand distribution. As the computational results in Section \ref{sec:computational_results} demonstrate, 
our approach yields policies for the original discrete-time problem formulation 
that match or outperform the best available benchmark,
confirming the practical effectiveness of this approximation.}

The warehouse manager's goal is to minimize the total expected discounted cost. 
More precisely, let $0 < \tau_1 < \tau_2 < \ldots$ denote the sequence of strictly increasing, non-anticipating stopping times
at which he places orders. Then, under the control policy $u=\{(y^{(j)},\tau_j),j\geq1\}$, the system state evolves according to the $d$-dimensional stochastic process 
\begin{equation}
X^{u}(t) = x - \mu t - \sigma B(t) + \sum_{\{j : \tau_j \leq t\}} y^{(j)}, {\ t\geq 0.}
\end{equation}

Letting the annual interest rate be $r > 0$, the expected total discounted cost under policy $u$ is given by
\begin{equation}
{J(x, u)} = \E_x\Big[\int_0^\infty e^{-r t} f(X^u(t)) dt + \sum_j e^{-r \tau_j} c(y^{(j)})\Big].
\end{equation}
We then define the optimal value function as follows:
\begin{equation}\label{eq:impulsevaluefunction}
V(x)=\inf_{u\in\mathcal{U}} {J(x,u)}\ \text{for each } x\in\R^d,
\end{equation}
where the infimum is taken over the set $\mathcal{U}$ of {non-anticipating} policies.



\section{The HJB Equation and an Equivalent Stochastic Identity}\label{sec:SDE}


In this section, we {describe} an identity, Eq.~\eqref{eq:keyidentityprop} below, that is similar in spirit to the results used by \cite{han2018solving} to justify their deep BSDE method for solving {semilinear} PDEs. That earlier work provided the motivation for our approach, though the specifics of our approach to solving the impulse control problem differ significantly.

The value function $V$ we seek in \eqref{eq:impulsevaluefunction} can be characterized as the solution to a particular kind of highly nonlinear PDE, namely, the HJB equation associated with the impulse control problem. 
Using the infinitesimal dynamic programming principle, one can show formally that the value function $V$ should satisfy the following HJB equation:
\begin{equation}\label{eq:qvi}
\left(\left(\mathcal{L}+r\right) V - f \right) (x) \vee \left( V(x) - \inf_{y \in \R_+^d} \left(V(x+y) + c(y)\right)\right) = 0, {\ x\in\R^d,}
\end{equation}
{where $\mathcal{L}:=-\frac{1}{2}\cdot \text{Tr}\left(\sigma \sigma^\top \text{Hess}(V) \right) + \mu^\top \nabla V$ is the diffusion operator; $\nabla V$ and $\text{Hess}(V)$ denote the gradient and Hessian of $V$, respectively; and 
$a \vee b\coloneqq\max\{a,b\}$ for scalars $a$ and $b$.}
To elaborate, 
the first term on the left-hand side of Eq.~\eqref{eq:qvi}, {which we will refer to as the ``no action'' term,} represents taking no action, {which intuitively means that} the warehouse manager opts to do nothing when the system state is $x$.  
The second term, {which we will refer to as the ``intervention'' term,} corresponds to placing an order when the system state is $x$, with the warehouse manager determining the order size vector $y^*$ that minimizes future expected discounted costs. 

The HJB equation \eqref{eq:qvi} will not be used directly to learn the value function $V$ in our approach. 
Instead, we rely on a related probabilistic characterization of $V$, to be described shortly in Eq.~\eqref{eq:keyidentityprop}. 
To this end, we begin by specifying what we call a \emph{reference policy} 
that is used to generate sample paths of the inventory process. 
That is, it facilitates our sampling of the paths living in the state space. 
We refer to the state process under the reference policy as the \emph{reference process}, 
denoted by $\tilde{X}(\cdot)$. Roughly speaking, we choose a reference policy so that 
the reference process tends to occupy the parts of the state space 
that we expect the optimal policy to visit frequently, {so as to enable efficient and stable training of the neural networks}.
{Note that the reference process plays the same role as the forward diffusion process in 
standard deep BSDE methods; see, e.g., Eq.~(16) in \cite{han2018solving}.}

{
As mentioned earlier, \cite{johnson1967optimality} characterized the optimal policy for the SJRP as a $(\sigma, S)$ policy. 
Also, for our approximating impulse control problem, a careful examination of the HJB equation \eqref{eq:qvi} 
suggests that whenever an order is placed, the inventory vector is brought up to an order-up-to vector 
that is independent of the current inventory vector, provided the latter does not exceed the order-up-to vector
(more will be said about this {in} Section \ref{sec:algorithm}). 
Thus, we build on a (randomized) order-up-to policy for choosing a reference policy. 
Additionally, we set the order times under the reference policy as the event times of a Poisson process with rate $\lambda$. 
We let $T_j$ denote the $j$th event time of the Poisson process for $j=1,2,\dots$. 
Given these event times, we let $Z_j \in \mathbb{R}^d,$ for $j=1,2,\dots,$ 
be random vectors drawn independently from a common distribution $\Phi$. 
In particular, $Z_j$ will serve as the (random) order-up-to vector for the $j$th order under the reference policy. 
{That is, $(Z_j - \tilde{X}(T_j-))^+$ denotes the order quantity associated with the order-up-to vector $Z_j$; 
here, $(x)^+$ denotes the componentwise positive parts of $x\in \mathbb{R}^d$, 
and $\tilde{X}(T_j-)$ denotes the inventory vector just before time $T_j$.}
}

{
Randomizing the order-up-to vectors {$Z_j$} facilitates greater exploration of the state space and improves the training process. 
{To further enhance state space exploration,}
we let $\left(\zeta_j\right)_{j=1}^\infty$ denote a sequence of i.i.d random vectors in $\mathbb{R}^d$, 
whose components are independent exponential random variables with mean $\alpha_i = \alpha \mu_i/\lambda$ for $i=1, \dots, d$. 
Then, we let 
$$
Y_j = \max\left\{ Z_j - \tilde{X}(T_j-), \zeta_j \right\}
$$
denote the vector of order quantities of various items for the $j$th order, 
where the $\max\{\cdot, \cdot\}$ is applied componentwise. 
This adjustment guarantees positive order quantities even if 
$Z_j \ngeq \tilde{X}(T_j-)$, i.e., if $Z_j$ does not dominate $\tilde{X}(T_j-)$ componentwise, 
{in which case some components of $(Z_j - \tilde{X}(T_j-))^+$ are zero.} 
To ensure the stability of the reference process, we set $\lambda \mathbb{E}[\zeta_j] < \mu_i$ for $i=1,\ldots,d$, i.e., $\alpha < 1$. Further details of the reference process parameters will be described in Section 5. Given $\lambda, \Phi$ and $\alpha$, we let
\begin{equation}\label{eq:uncontrolled_control}
U(t; \alpha, \lambda, \Phi) = \sum_{j: T_j \le t} Y_j, \quad t \ge 0,
\end{equation}
and refer to the process $U(\cdot; \alpha, \lambda, \Phi)$ as the reference
policy. {When the {meaning is clear from the context}, we suppress dependence on $\alpha$, $\lambda$, and $\Phi$, 
and simply write $U(\cdot)$.}
The corresponding reference process is given by
\begin{equation}\label{eq:uncontrolled_reference}
\tilde{X}(t) = x - \mu t - \sigma B(t) + U(t), \quad t \ge 0.
\end{equation}
}





{{\bf Stochastic identity.} We now present the equivalent probabilistic representation of the value function $V$. 
Fix a reference policy $U(\cdot)$. 
{Then, for any $T>0$ and $\tilde{X}(0)=x\in\R^d$, 
$V(x)$ can be formally characterized as follows: 
}
\begin{equation}\label{eq:keyidentityprop}
\begin{aligned}
V(x) = \sup \Big\{ v \in \R \ :\  & v - \int_0^T e^{-r t}   \nabla V(\tilde{X}(t))^\top \sigma dB(t)  \\
&  \leq e^{-r T} V(\tilde{X}(T)) + \int_0^T e^{-r t} f(\tilde{X}(t))dt + \sum_{j : T_j \leq T} c(Y_j)e^{-rT_j},\ \textnormal{a.s.} \Big\}.
\end{aligned}
\end{equation}
{\noindent The right-hand side of Eq.~\eqref{eq:keyidentityprop} defines {a} stochastic target problem, 
which, roughly speaking, is a control problem that aims to steer the system state so as to hit a target set at the terminal time $T$. 
The connection between impulse control and stochastic target problems was first established in \cite{bouchard2009stochastic}, for the case of discrete jump sizes. 
\cite{kharroubi2010backward} later extended this connection for more general impulse control problems. 
{For some intuition on the stochastic identity \eqref{eq:keyidentityprop}, 
note that {under suitable integrability conditions,} the almost-sure inequality implies that 
$V(x)$ satisfies
\[
V(x) \leq \E\left[\int_0^T e^{-r t} f(\tilde{X}(t))dt + \sum_{j : T_j \leq T} c(Y_j)e^{-rT_j} + e^{-r T} V(\tilde{X}(T)) \right].
\]
The right-hand side is the expected discounted cost under a policy that follows the reference policy on $[0,T]$ 
and acts optimally from time $T$ onwards.
Since $V(x)$ is the optimal cost over all policies, any such policy must cost at least $V(x)$.
The surprising part is the exact characterization of $V(x)$ as the largest value for which the almost-sure inequality holds; 
we provide a more substantive formal derivation in Appendix~\ref{app:A}.}

\section{Computational Method}\label{sec:algorithm}

The identity \eqref{eq:keyidentityprop} serves as the basis of our computational method. Seeking an approximate solution to it, we define the discretized loss function displayed in Eq.~\eqref{eq:lossfunction}. Recall the reference process defined in Eq.~\eqref{eq:uncontrolled_reference}. As a preliminary to defining the loss function, we fix a partition $0=t_0<t_1<\ldots<t_N=T$ of the time horizon and simulate $K$ discretized sample paths of the reference process at times $t_0,t_1,\ldots,t_N$; see Subroutine~1. 

{
{\bf Reference policy.} As reviewed in Section 2, decades of research activity has culminated in effective heuristic policies for the SJRP. These include the $(R,S)$, $(Q,S)$, {$(R,Q,S)$}, and can-order policies, which are described in Section 6 and Appendix C.2. We consider their performances as benchmarks for our proposed policy. Crucially, they each have
an order-up-to vector $S$ as a policy parameter. In what follows, we set
\begin{equation}\label{eq:reference-S}
\mathbb{E}[Z_j] = {\psi}S \quad \text{for } j \ge 1,
\end{equation}
where $S$ is derived for one of these three benchmark policies, {and $\psi\geq1$ is a scaling factor}. More specifically, assuming the optimal order-up-to levels
are positive,\footnote{This is a reasonable assumption for our test problems because the backlog costs are considerably larger than the holding cost.} we set $Z_j$ to be a vector of lognormal random variables, where its $i$th component has mean {$\psi S_i$} and variance {$(\nu\psi S_i)^2$}. Here, $\nu > 0$ is a tuning parameter and is used to control the degree of exploration of the state space. 
We introduce the tuning parameter $\psi$ to control the degree of exploration during the training process. By default, we set $\psi=1$, unless the order-up-to vector $S$ is too close to the origin, in which case we consider $\psi>1$ to increase exploration. For virtually all test problems, we use $\psi=1$ and report its value only when it differs from one; see Tables~\ref{tab:hyperparams}--\ref{tab:hyperparams-50d} in Appendix~\ref{app:B_configs}.
Similarly, the parameter $\alpha$ that determines the means of the exponential random vectors $\zeta_j$, for $j \ge 1$, is also viewed as a tuning parameter.
}


\begin{algorithm}[h!]
\floatname{algorithm}{Subroutine}
\caption{Euler-Maruyama discretization scheme.}
\begin{algorithmic}[1]
\Require A reference process $U(\cdot;\alpha,\lambda,\Phi)$, the drift vector $\mu$, the covariance matrix $\sigma\sigma^\top $, the time horizon $T$, the number of intervals $N$, a discretization step size 
{$\Delta t = T/N$, and}
an initial state $x_0$. 
\Ensure Discretized reference process $\tilde{X}(t_n)$ for $n = 1, \ldots, N$, the Brownian increments $\Delta B(t_n)$ and compound Poisson increments $\Delta U(t_n)$ for $n = 0, \ldots, N - 1$.

\Function{Euler-Maruyama}{$T, \Delta t, x$}
    \State {Construct the partition $0=t_0<\ldots<t_N=T$, where $t_{n+1}=t_n+\Delta t$ for $n=0,\ldots, N-1$. }
    \State {Generate $N$ i.i.d.~$d$--dimensional Gaussian random vectors $\Delta B(t_n)$ with mean vector zero and covariance matrix $\Delta t I$ for $n=0,\ldots, N-1$.}
    \State {Generate $N$ i.i.d. $d$--dimensional vectors $Z_n$, using $\Phi$ {whose components have independent lognormal distributions}, for $n=0,\ldots, N-1$.}
    \State {Generate $N$ i.i.d. $d$--dimensional vectors $\zeta_n$ for $n=0,\ldots, N-1$, where each component is drawn independently from an exponential distribution with mean $\alpha_i$.}
    \For{$n = 0$ to $N-1$}
    \State Generate $I_n \sim \operatorname{Bernoulli}(\lambda \Delta t)$
    \State $\Delta U(t_n) \leftarrow I_n \cdot \max\{Z_n - \tilde{X}(t_n),\zeta_j\}$ (with $\max\{\cdot,\cdot\}$ applied componentwise)
            \State $\tilde{X}(t_{n+1}) \gets \tilde{X}(t_n) -\mu \Delta t - \sigma \Delta B(t_n) + \Delta U(t_n)$
    \EndFor
    \State \Return $\tilde{X}(t_n)$ for $n = 0, \ldots, N$, and $\Delta B(t_n)$ and $\Delta U(t_n)$ for $n = 0, \ldots, N-1$.
\EndFunction
\end{algorithmic}
\label{algo:euler}
\end{algorithm}



{\bf Loss function.} Our method, described in Algorithm \ref{algo}, computes the loss function defined in Eq. \eqref{eq:lossfunction} by summing over the sample paths of the reference process and over discrete time steps to approximate the various integrals over $[0,T]$ appearing in the identity \eqref{eq:keyidentityprop}.
We approximate the value function $V(\cdot)$ and its gradient $\nabla V(\cdot)$ by deep neural networks, $H_\theta$ and $G_\vartheta$, respectively, with associated parameter vectors $\theta$ and $\vartheta$. Using the sample paths of the time-discretized reference process, the loss function $L_\beta$ is defined as
\begin{equation}\label{eq:lossfunction}
\begin{aligned}
L_\beta & (\theta,\vartheta):= \frac{1}{K}\sum_{k=1}^{K}\Big[-H_\theta(x^{(k)}_0)+ \beta\cdot 
 l\Big(H_\theta(x^{(k)}_0)  - e^{-r T} H_\theta(\tilde{X}^{(k)}(T))\\
& - \sum_{n=0}^{N-1} e^{-r t_n} G_\vartheta(\tilde{X}^{(k)}(t_n))^\top \sigma\Delta B^{(k)}(t_n)-\sum_{n=0}^{N-1} e^{-r t_n} f(\tilde{X}^{(k)}(t_n)) \Delta t_n - \sum_{n=0}^{N-1} e^{-r t_n} c(\Delta U^{(k)}(t_n))\Big)\Big],
\end{aligned}
\end{equation}
where the tuning parameter $\beta\gg1$ can be viewed as a Lagrange multiplier. The penalty function $l(\cdot)$ is given by
\begin{equation}\label{eq:squaredrectloss}
{l(x)=\left[(x)^+\right]^2.}
\end{equation}
The function $L_\beta$ consists of (the negative of) the value function estimate, $-H_\theta(x_0^{(k)})$, and a penalty term, $\beta l(\cdot)$, averaged over $K$ initial states $x_0^{(k)},\ k=1,\ldots,K$. {As mentioned in Step~6 of Algorithm~2, the process $\tilde{X}(\cdot)$ is simulated continuously, meaning that the terminal state of an iteration becomes the initial state of the next iteration. Loosely speaking, this approach can be seen as an attempt to capture the long-run behavior of the reference process.}
The penalty term penalizes violations of the almost sure inequality that appears in Eq.~\eqref{eq:keyidentityprop}. The higher $\beta$ is, the lower the violation probability is. {Note from Eq.~\eqref{eq:keyidentityprop} that we seek to choose the largest value $v$ while satisfying the inequality there almost surely. Thus, the loss function $L_\beta$ is designed so that as we minimize it,} we increase the value function estimate $H_\theta$, while simultaneously decreasing the violation probability. Also, we average over $K$ initial states to avoid overfitting at any particular state. The tuning parameter $\beta$ seeks to balance these two different objectives.

{\bf The penalty rate and learning rate schedules.} Our computational method uses a stochastic gradient descent (SGD) type method to minimize the loss function. 
This involves choosing a learning rate schedule as done for deep learning algorithms. 
Additionally, we also need to choose the parameter $\beta$ that is used to penalize the violations of the stochastic inequality on the right-hand side of {Eq.~\eqref{eq:keyidentityprop}}. 
As one would expect, the higher the value of $\beta$, the lower the violation probability is. 
However, we observed empirically that the training is less stable with a higher value of $\beta$. 
{To deal with this instability, we adopt the approach of gradually increasing}
the penalty parameter $\beta$ during training, while decreasing the learning rate {as needed}. 
{More specifically, we begin with a relatively low value of the penalty parameter $\beta$, 
at which we observe that a significant fraction of simulated sample paths violate the stochastic inequality. 
At the same time, we set the learning rate parameter relatively large and iterate until the loss appears to converge, 
lowering the learning rate if the training becomes unstable.
Then, we increase the penalty parameter (e.g., by a factor of ten) and repeat the same procedure: 
We iterate until the loss appears to converge, and decrease the learning rate as needed if we observe instability.}
We keep increasing the penalty parameter in this manner until the violation probability is sufficiently low, e.g., less than 1\%. 
For this final value of the penalty parameter, we choose a learning rate schedule manually by observing the evolution of the loss as we iterate, 
and lowering the learning rate as needed. {See Appendix~\ref{app:B_material} for more details.}

Using the approach just described, and through extensive experimentation, we identified a set 
of {\em a priori} fixed penalty rate and learning rate schedules {--- $\{\beta_m\}$ and $\{\eta_m\}$, where $m$ 
indexes the iterations ---} to be considered in our experiments, 
as reflected in Algorithm \ref{algo}. In particular, 
{both schedules are treated as inputs to the algorithm and held fixed across all iterations of a given training run.
As the reader will see, while Algorithm~\ref{algo} formally allows $\{\beta_m\}$ and $\{\eta_m\}$ to vary with $m$, 
in practice we restrict to piecewise-constant schedules, with values changing only at pre-selected small set of iterations.}
See Appendix~\ref{app:B_configs} for {additional details on} the specific penalty parameter and learning rate schedules used for our test problems.
Further implementation details of our computational method are provided in Appendix~\ref{app:B}.

\begin{algorithm}[h!]
\caption{Algorithm for approximating the value function of the impulse control problem.}
\begin{algorithmic}[1]
\Require The number of iteration steps $M$, a batch size $K$, a time horizon $T$, the number of intervals $N$, a discretization step-size $\Delta t$, an initial state $x_0$, a fixed reference policy $U(\cdot;\alpha,\lambda,\Phi)$, a penalty scheme $\{\beta_m\}$, {a learning rate schedule $\{\eta_m\}$}, and an optimization algorithm (e.g., SGD, ADAM, or RMSProp).
\Ensure The neural network approximation of the value function $V(\cdot)$, $H_{\theta}(\cdot)$, and of 
{its gradient $\nabla V(\cdot)$,}
$G_{\vartheta}(\cdot)$.
\State Initialize the neural networks for $H_\theta(\cdot)$ and $G_\vartheta(\cdot)$. Set {$x^{(k)}_0=x_0$} for all $k=1,\ldots,K$.
\For{$m \gets 1$ to $M$}
\State For each sampled initial point $x^{(k)}_0$, simulate a discretized sample path of the inventory state process, Brownian motion increments, and increments of the compound Poisson process $\{\tilde{X}^{(k)}, \Delta B^{(k)},\Delta U^{(k)}\}$ with a time horizon $T$ and a step-size $\Delta t$, starting from $x^{(k)}_0$ by invoking \textsc{Euler-Maruyama}$(T, \Delta t, x^{(k)}_0)$ for $k = 1, \ldots, K$.
\State Compute the Lagrangian function
\begin{equation*}
\begin{aligned}
&\!\!\!\!L_{\beta_m}(\theta,\vartheta) = \frac{1}{K}\sum_{k=1}^{K}\Bigg(-H_\theta(x^{(k)}_0)+ \beta_m \Bigg[\Big(H_\theta(x^{(k)}_0)  - e^{-r T} H_\theta(\tilde{X}^{(k)}(T))\\
&\!\!\!\! - \sum_{n=0}^{N-1} e^{-r t_n} G_\vartheta(\tilde{X}^{(k)}(t_n))^\top \sigma\Delta B^{(k)}(t_n)-\sum_{n=0}^{N-1} e^{-r t_n} f(\tilde{X}^{(k)}(t_n)) \Delta t - \sum_{n=0}^{N-1} e^{-r t_n} c(\Delta U^{(k)}(t_n))\Big)^{+}\Bigg]^2\Bigg)
\end{aligned}
\end{equation*}
\State Compute the gradient of the Lagrangian $\nabla L$ with respect to $(\theta, \vartheta)$ and update using the chosen optimization algorithm, {with the learning rate $\eta_m$}.
\State Update the initial states by setting, for each $k=1,\ldots,K$, $x^{(k)}_0 \gets \tilde{X}^{(k)}(T)$.
\EndFor
\State \Return Functions $H_\theta(\cdot)$ and $G_\vartheta(\cdot)$.
\end{algorithmic}
\label{algo}
\end{algorithm}

{\bf The proposed policy.}
After obtaining 
{the neural network approximations $H_\theta$ 
and $G_\vartheta$}
of the value function and its gradient, {respectively}, 
we {propose} a policy exploiting the structure of the HJB equation \eqref{eq:qvi}.
To be specific, Eq. \eqref{eq:qvi} implies that either 
the no action term is zero, i.e.,
\begin{equation}\label{eq:qvi_diffusion}
\mathcal{L}V(x)+r V(x) - f(x) = 0, 
\end{equation}
{or the intervention term is zero, i.e., }
\begin{equation}\label{eq:qvi_obstacle}
V(x) = \inf_{y \in \R_+^d} \left\{V(x+y) + c(y)\right\}.
\end{equation}
{In the former case, it is optimal to take no action. 
In the latter case, it is optimal to place an order, 
and the order vector is given by the minimizer of 
the right-hand side of Eq. \eqref{eq:qvi_obstacle}.}

One way to operationalize this is to check if condition \eqref{eq:qvi_diffusion} holds. If \eqref{eq:qvi_diffusion} holds for some state $x$, no action is required; otherwise, intervention through impulse control is warranted. That is, if for our neural network estimates of $V$ and $\nabla V$, we have 
\begin{equation}\label{eq:qvi_diffusion_approx}
\mathcal{L}V(x)+r V(x) - f(x) \leq -\epsilon < 0,
\end{equation}
then we intervene through impulse control. Here $\epsilon$ is a suitably chosen numerical tolerance to account for optimization and sampling errors. 
To obtain an estimate of the value of $\mathcal{L}V$, note that estimates of the value function $V$ and its gradient $\nabla V$ are already available from the deep neural network approximations, $H_\theta$ and $G_\vartheta$, respectively. It then remains to estimate the Hessian of $V$ through{, for example,} automatic differentiation {of $G_\vartheta$} \citep{hure2020deep,pham2021neural}. 


If one decides to intervene, the impulse control $y^*$ is determined by selecting 
\begin{equation}\label{eq:argminimpulse}
y^*\in \operatorname{argmin}_{y \in \R_+^d} \left\{H_\theta(x+y) + c(y)\right\}.
\end{equation}
To solve the minimization problem in \eqref{eq:argminimpulse}, one can use a variety of numerical optimization techniques. Further details are provided in Appendix~\ref{app:B}.


{We interpret this policy in the context of the original (discrete-time) formulation of the SJRP as follows: Given a system state $x$, the warehouse manager checks if an intervention is warranted. If so, he places an order, and his order vector is $y^*$ given in Eq.~\eqref{eq:argminimpulse};} 
{see Algorithm~\ref{alg:policy} for a detailed description.}

{\bf Simulation and optimization.} 
{{\em A priori}, under the proposed policy, every time that the warehouse manager decides to place an order, 
he needs to solve the minimization problem in Eq. \eqref{eq:argminimpulse} to determine the order size vector $y^*$. 
This is computationally burdensome for performance simulations. 
However, the following observation allows us to compute a single order-up-to vector only once at the beginning of a simulation, 
resulting in a substantial speedup. 
Consider the minimization problem in Eq. \eqref{eq:argminimpulse}, 
{and assume that the optimal solution $y^* \neq 0$.}
Because the inventory state $x$ is not a decision variable, 
the problem is in fact equivalent to
\begin{equation}\label{eq:argminimpulse_alt}
\min_{y \in \R_+^d} \left\{H_\theta(x+y) + c(y) + \sum_{i=1}^d c_i x_i \right\},
\end{equation}
which can be further simplified and bounded as follows: 
\begin{align}
\min_{y \in \R_+^d} \left\{H_\theta(x+y) + c(y) + \sum_{i=1}^d c_i x_i \right\}
 &~= \min_{y \in \R_+^d} \left\{H_\theta(x+y) + c(x+y) \right\} \nonumber\\
 &~\geq \min_{z \in \R^d} \left\{H_\theta(z) + c(z) \right\}. \label{eq:single_order_up_to_level}
\end{align}
{Here, the equality uses the fact that $y^* \neq 0$, 
which ensures that $c(y^*) + \sum_{i=1}^d c_i x_i = c(x+y^*)$.}
Let $z^*$ be the optimal solution to the minimization problem in Eq. \eqref{eq:single_order_up_to_level}. 
Then, as long as $x \le z^*$ componentwise, 
the minimization problems in Eqs. \eqref{eq:argminimpulse} and \eqref{eq:single_order_up_to_level} are equivalent. 
This in turn implies that under our proposed policy, the order-up-to vector is simply $z^*$, irrespective of the initial state $x_0$, 
as long as $x_0\leq z^*$. 
Thus, if we assume that $z^* \geq 0$, which holds in all our test problems, 
then, since we start our simulations from the initial state $x_0=0$, 
the order-up-to vector $z^*$ does not need to be recomputed at every ordering decision. 
}


{
{\bf An alternative approach for deriving the optimal order quantity.} One can alternatively use the neural network approximation of the gradient, $G_{\vartheta}$, for solving \eqref{eq:argminimpulse}. In particular, the first-order
optimality condition yields $\nabla_z V(z) = -c$, where $c$ is the vector of variable ordering costs. Thus, substituting $G_\vartheta$ for $\nabla V$ then yields
\begin{equation}\label{eq:stationary_conditionZ}
G_\vartheta(z^*) = -c.
\end{equation}
Solving this system gives the optimal order-up-to vector. 
{One can compare these approaches in two ways: (i) by fixing the same hyperparameter configuration for both methods \emph{a priori}, or (ii) by considering the best-performing hyperparameter configuration for each method separately. Under the first comparison, the results of the two methods can differ substantially. Under the second comparison, however, the results are close, except for some test problems in the high-variability case.}
As mentioned earlier, since the order-up-to vectors only need to be computed once at the beginning of a simulation, 
running both optimization routines yields negligible overhead. 
Thus, we compute both vectors and select the one with the lower simulated cost.}
{In practice, both optimization problems are solved over a compact box $[\underline{z},\overline{z}] := \{z\in\mathbb{R}_+^d:\underline{z}\leq z\leq\overline{z}\}$. The box is chosen large enough to contain a near-optimal order-up-to vector, while remaining small enough to restrict the search to regions that were thoroughly explored during training, where the neural network approximations are considered reliable; see Appendix~\ref{app:B} for further details.}

\begin{algorithm}[h!]
\caption{Proposed inventory control policy.}
\label{alg:policy}
\begin{algorithmic}[1]
\Require Neural network approximations $H_\theta(\cdot)$ and $G_\vartheta(\cdot)$ of the value function $V(\cdot)$ and its gradient $\nabla V(\cdot)$, respectively; numerical tolerance $\epsilon > 0$; 
{a compact box $[\underline{z},\overline{z}]
:= \{z\in\mathbb{R}_+^d:\underline{z}\le z\le\overline{z}\}$
over which the order-up-to vector is computed;}
initial point $z_0 \in [\underline{z}, \overline{z}]$.
\State \textbf{Preprocessing:} Compute the order-up-to vector $z^*$ using one of two approaches:
\begin{itemize}
    \item[(a)] Solve the minimization problem
    $z^* \in \operatorname{argmin}_{z \in [\underline{z}, \overline{z}]} \left\{ H_\theta(z) + c(z) \right\}$;
    \item[(b)] Solve the stationarity condition $G_\vartheta(z^*) = -c$ by minimizing $\frac{1}{2}\|G_\vartheta(z) + c\|_2^2$ over $z \in [\underline{z}, \overline{z}]$.
\end{itemize}
\Comment Both approaches use L-BFGS with box constraints, initialized at $z_0$; see Appendix~\ref{app:B} for details.
\State \textbf{At each decision epoch:} Observe the current inventory state $x$.
\State Compute the Hessian approximation $\mathrm{Hess}_\vartheta(x) := \mathrm{Jacobian}(G_\vartheta)(x)$ via automatic differentiation.
\State Compute the no-action term
$$
\mathcal{N}(x) := -\tfrac{1}{2}\mathrm{Tr}(\sigma\sigma^\top \mathrm{Hess}_\vartheta(x)) + \mu^\top G_\vartheta (x) + r H_\theta(x) - f(x).
$$
\If{$\mathcal{N}(x) \leq -\epsilon$} \Comment{No-action condition \eqref{eq:qvi_diffusion} violated}
    \State Place an order with order vector $y^* = (z^* - x)^+$.
\Else
    \State Do not place an order.
\EndIf
\end{algorithmic}
\end{algorithm}

{To repeat, under natural conditions one needs to solve \eqref{eq:argminimpulse} or \eqref{eq:stationary_conditionZ} 
only once as argued above. This is done in Step 1 of Algorithm~\ref{alg:policy} as the preprocessing step.}


\section{Test Problems and Benchmark Policies}\label{sec:testproblems_benchmarks}

{
We begin by describing the test problems.
We consider a total of 43 test problems\footnote{Additionally, Appendix D considers a one-dimensional impulse control problem studied in \cite{sulem1986solvable} and verifies the validity of our solution method by comparing it to the one derived in \cite{sulem1986solvable}; see Figure~\ref{fig:value-gradient} in Appendix~\ref{app:D}.} that can be divided into three groups. 
{First, we consider 27 twelve-dimensional problems based on an example studied in \cite{atkins1988periodic}.
These problems vary across three attributes: Demand variability, fixed ordering cost, and backlog penalty costs.
For each attribute, we consider low, medium, and high values, yielding 27 test problems.}}
    

{
Computing the optimal policy for these problems is intractable due to the curse of dimensionality. 
Thus, to assess the performance of our proposed policy, we use the {state-of-the-art} heuristic policies drawn from the literature as benchmarks; 
see below for more on those benchmark policies. However, when the number of items is small, e.g., $d=2$, 
one can compute the optimal policy using standard dynamic programming methods. 
This, of course, helps assess how close our proposed policy is to the optimal one for low-dimensional problems. 
Therefore, we next consider 7 test problems of dimension two. 
To do so, we consider only two products of the 12-dimensional problem. 
For our base case of the two-dimensional test problems, we choose the medium values of demand variability, ordering cost, and {backlog penalty costs}. 
Additionally, while fixing all else we consider setting each of {the three attributes,}
one at a time, to low and high values, resulting in a total of seven test problems of dimension two.
Lastly, to assess the scalability of our method, we consider nine test problems of dimension 50 that are designed by building on the earlier test problems. More will be said about them below.
}

{
For all test problems, we set the time unit as a year, the initial inventory to zero, and the annual interest rate to
$r=5\%$, and allow the warehouse manager to make ordering decisions weekly.
}

{
{\bf 12-dimensional test problems.} We build on the 12-item problem studied in \cite{atkins1988periodic}. 
We set the holding cost parameters $h_i=2$ for all items ($i=1,\dots,12$). 
To capture the three levels of demand variability, we proceed as follows: 
{For low variability, we model the item-level demand as independent Poisson processes. 
In this case, the coefficient of variation (CV) of item-level annual demand ranges from $0.158$ (for items with annual demand rate $40$) 
to $0.224$ (for items with annual demand rate $20$); cf. Table~\ref{tab:atkins12item_description}.
For medium and high variability cases, we model each item-level annual demand as an independent negative binomial random variable,  
and set their parameters so that the CV of the annual demand is $0.5$ per item for the medium variability case, 
and it is $1$ per item for the high-variability case.}
We choose the negative binomial distribution because it can effectively model both high and low demand rates, and its dispersion parameter can be used to fit a wide range of empirical retail demand distributions \citep{nahmias1994optimizing,ehrhardt1979power}. Table~\ref{tab:atkins12item_description} shows the annual demand rates and variable costs for each item. As mentioned earlier, we consider three possible values for the
fixed cost of ordering and the backlog penalty costs. More specifically, we set $p_i=p$ for all $i$ and let
$p \in \{10, 50, 100\}$ and $c_0 \in \{20, 100, 200\}$. \\
}

\begin{table}[h!]
\centering
\caption{Demand rates and variable costs for the 12-item problem}
\label{tab:atkins12item_description}
\begin{tabular}{ccccccccccccc}
\toprule
Item & 1 & 2 & 3 & 4 & 5 & 6 & 7 & 8 & 9 & 10 & 11 & 12\\
\midrule
Demand rate & 40 & 35 & 40 & 40 & 40 & 20 & 20 & 20 & 28 & 20 & 20 & 20 \\
$c_i$ & 0.1 & 0.1 & 0.2 & 0.2 & 0.4 & 0.2 & 0.4 & 0.4 & 0.6 & 0.6 & 0.8 & 0.8 \\
\bottomrule
\end{tabular}
\end{table}

{\bf Low-dimensional test problems.} Letting $d=2$, the parameters of our test problem are taken from the preceding one. 
Namely, we focus on items 1 and 7 there. For the base case, we set $c_0=50$ and $p=50$, and model item-level annual demand as a negative
binomial random variable with annual CV $=0.5$, i.e., the medium variability case. The other six 2-dimensional test problems follow from varying one of the following at a time: The demand variability, the fixed cost of ordering, and the backlog penalty parameter. 
More specifically, these alternative parameters correspond to demand variability $\in \{\text{low, high}\}$, $c_0 \in \{20, 100\}$, and $p \in \{10, 100\}$.
For the low variability case, {we model the item-level demands as independent Poisson processes as done earlier.} 
{For the high-variability case, item-level demands follow negative binomial distributions with annual CV $=1.0$.} 




{
{\bf High-dimensional test problems.} As mentioned earlier, we also consider nine 50-dimensional test problems. 
To set the problem parameters, we partition the items into three groups. 
To be specific, items 1-15 form the first group, items 16-30 form the second group, and items 31-50 form the third group. 
Within each group, items share the same annual demand rate, per-unit holding cost, backlog penalty, and variable cost; 
these parameters differ across groups. We vary the fixed cost $c_{0}\in\{50,150,250\}$, 
and again model demand as Poisson (low variability) or negative binomial {with an item-level CV of 0.5 (medium variability) or 1.0 (high variability).}
These lead to nine additional test problems. Table~\ref{tab:100dtest} summarizes the key parameters for each group.
}

\begin{table}[h!]
\centering
\caption{Problem parameters for the 50-item test case (annual demand rates and holding and backlog costs).}
\label{tab:100dtest}
\begin{tabular}{cccc}
\toprule
{Items} & 1--15 & 16--30 & 31--50 \\
\midrule
Demand rate & 50.0 & 25.0 & 12.5 \\
$h_i$& 1.0 & 2.0 & 4.0 \\
$p_i$ & 25.0 & 50.0 & 100.0 \\
$c_i$ & 0.1 & 0.2 & 0.4 \\
\bottomrule
\end{tabular}
\end{table}

{
Next, we describe the benchmark policies we use to assess the performance of our proposed policy. Crucially, we make all performance comparisons in the context of the original discrete-time formulation, cf.~Eqs.~\eqref{eq:inv_cost}--\eqref{eq:MDPvaluefunction}, as opposed to the impulse control approximation \eqref{eq:continuous_process}--\eqref{eq:impulsevaluefunction}, even though our proposed policy is derived using the latter.
}

{
{\bf Benchmark policies.} 
As mentioned earlier, it is computationally feasible to solve the low-dimensional test problems. Thus, we compute the optimal policy using policy iteration and use it as the benchmark for those cases. For the other test problems, we rely on effective heuristic policies drawn from the extensive literature on the SJRP. More specifically, we consider the following benchmark policies:
\begin{enumerate}
    \item[(i)] The periodic review $(R,S)$ policy proposed by \cite{atkins1988periodic};
    \item[(ii)] the $(Q,S)$ policy described in \cite{pantumsinchai1992comparison}; 
    {\item[(iii)] the hybrid $(R,Q,S)$ policy proposed by \cite{ozkaya2006stochastic};}
    \item[(iv)] the can-order policy described in \cite{federgruen1984coordinated};
    \item[(v)] the independent $(s,S)$ control policy \citep{sulem1986solvable,zheng1991finding};
\end{enumerate}
}

Under the $(R,S)$ policy, the inventory levels of all items are replenished every $R$ time periods, 
with each item $i = 1, \dots, d$ having its inventory level brought back to its target $S_i$. 
The $(Q, S)$ policy, studied in \cite{pantumsinchai1992comparison}, 
places an order whenever the total aggregate demand since the previous replenishment exceeds a specified value $Q$, 
raising the inventory, for each item $i = 1, \dots, d$, 
back to its order-up-to level $S_i$. 
{The $(R, Q, S)$ policy, proposed by \cite{ozkaya2006stochastic}, 
builds on the $(R, S)$ and $(Q, S)$ policies, and can be described as follows: 
It places an order whenever (a) the total aggregate demand since the previous replenishment reaches $Q$, 
or (b) $R$ time units have elapsed since the last decision epoch, whichever occurs first, 
raising each item $i = 1, \dots, d$ to its order-up-to level $S_i$.}
{Under} the can-order policy, each item $i$ is managed with three control parameters: 
A reorder point $s_i$, a can-order level $c_i$, and an order-up-to level $S_i$. 
For each item $i = 1, \dots, d$, a replenishment is triggered when its inventory level drops to or below $s_i$, 
bringing it back up to $S_i$. If at this point the inventory level of any other item $j \neq i$ is 
at or below its can-order level $c_j$, this item $j$ is also included in the replenishment, 
and its inventory is raised to $S_j$. 
The independent $(s,S)$ policy assumes the setting without any benefits from joint ordering 
and calculates the individual optimal $(s,S)$ levels as if there were no opportunity 
for group savings on the fixed cost. 
Further implementation details of these benchmark policies are provided in Appendix~\ref{app:C2}.

\section{Computational Results}\label{sec:computational_results}
{For the test problems introduced in Section~\ref{sec:testproblems_benchmarks}, 
we now compare the performance of our proposed policy, henceforth referred to as the neural network (NN) policy, to the benchmarks. 
For the benchmark policies, i.e., $(R,S)$, $(Q,S)$, {$(R,Q,S)$}, can-order, and independent $(s,S)$, 
we compute their performance estimates by averaging over {100,000} sample paths of length 10{,}000 time periods each. 
The resulting standard errors are negligible ($\leq0.01\%$ of the respective estimated performance), 
so we do not report them. 
For the NN policy, depending on the problem dimension, 
we estimate its performance by averaging over {100 or 1,000} sample paths of length 10{,}000 periods each; 
further details are provided below.
The numbers of simulated sample paths under the NN policy are chosen to strike a balance 
between computational tractability and statistical accuracy: 
On the one hand, checking the no-action condition Ineq.~\eqref{eq:qvi_diffusion_approx} is computationally intensive, 
as it involves evaluating the approximate Hessian of the value function through automatic differentiation; 
on the other hand, we simulate enough paths to obtain standard errors that are less than $1\%$ of the respective estimated performance.
For a test problem, we say that the NN policy {\em matches} a benchmark 
if the estimated benchmark cost is within $1\%$ of the estimated cost under the NN policy, 
i.e.,
\[
\frac{\left|~\text{Estimated benchmark cost} - \text{Estimated NN cost}~\right|}{\text{Estimated NN cost}} \leq 1\%,
\]
and that the NN policy {\em beats} a benchmark if the estimated benchmark cost is at least $1\%$ more than the estimated NN cost, i.e.,
\[
\frac{\text{Estimated benchmark cost} - \text{Estimated NN cost}}{\text{Estimated NN cost}} > 1\%.
\]

{On our hardware, each neural network training takes roughly 2.5 hours in 12 dimensions and 4 hours in 50 dimensions. 
{On a single CPU, the performance simulations take roughly 6 hours in 12 dimensions and 10 hours in 50 dimensions.}
Computation time thus grows only modestly with the problem dimension and does not exhibit exponential growth in $d$. 
This is also because we use neural networks whose capacity scales modestly with the problem dimension (e.g., three hidden layers for the 12-dimensional instances and four for the 50-dimensional instances). 
Further implementation details, such as machine configurations, programming language, and software packages, are described in Appendix~\ref{app:B_material}. 
Precise training and simulation times for each instance are reported in Appendix~\ref{app:B_configs}.}
Our code is available at \url{https://github.com/wvaneeke/DeepSJRP}.

Before presenting the computational results in detail, we first summarize the main findings.
Overall, the NN policy performs strongly and often surpasses the benchmarks by a significant margin.
{Across the 36 high-dimensional test problems (27 in dimension twelve and 9 in dimension fifty),
it matches or beats the best benchmark, and does so by a clear margin in most medium- to high-variability cases.}
In all seven two-dimensional test problems, the performance gap between the NN policy and the MDP solution is within $1\%$,
and the NN policy structure closely aligns with that of the MDP, suggesting that the NN policy learns the optimal policy in low dimensions. 
We now proceed to the detailed results. 
}

\paragraph{Results in dimension 12.} Tables~\ref{table:12dlow}, \ref{table:12dmed}, and \ref{table:12dhigh} 
report computational results in the low, medium, and high variability settings, respectively, 
for the main 12-dimensional test problems. 
Each table reports the simulated performances of our NN policy, 
together with relative gaps between the various benchmark policies and our policy.
{The performance of the NN policy is estimated by averaging over {100 sample paths.}
The standard errors are shown as percentages of the estimated averages. 
Reported standard errors are small ($<1\%$), indicating statistically reliable estimates.
}

{For the nine test problems with low variability in Table \ref{table:12dlow}, 
our NN policy performs on par with the best benchmark policy, 
{which turn out to be $(Q,S)$ and $(R,Q,S)$ in all cases (these two policies in fact attain identical costs to within simulation error across all instances in our study). In each case, the optimal $R$ is large enough that orders are nearly always triggered by the $Q$ threshold rather than by the periodic review, so $(R,Q,S)$ effectively reduces to $(Q,S)$. 
The same is true for all subsequent test problems as well. Thus, in the discussions that follow, we do not discuss $(R,Q,S)$ explicitly.}
The NN policy outperforms the can-order policy by a significant margin in most cases, 
but the performance gap shrinks as the backlog penalty increases. 
Intuitively, the shrinking performance gaps can be explained by the can-order policy's tendency
to order more proactively than other benchmark policies, 
suggesting that it is better at avoiding states with large backlogs 
and performs better as the backlog penalty increases. 
The NN policy also significantly outperforms the independent $(s,S)$ policy. 
This is expected, as the independent $(s,S)$ policy does not exploit cost-saving opportunities 
of joint replenishment and instead triggers an order each time an item reaches its reorder point individually. 
}

{
We now focus on the performance comparisons among NN, $(R,S)$, and $(Q,S)$ in Table \ref{table:12dlow}. {For all values of the fixed cost,} the NN policy tends to perform better when compared to $(R,S)$ and $(Q,S)$, as the backlog penalty increases. A possible explanation is that the neural networks are flexible enough to learn the shape of the ordering boundary well, so under high backlog penalties, the NN policy can better avoid costly states with large backlogs. When the fixed cost is high, it is optimal to place orders less frequently, allowing demand to pool between replenishments and thereby reducing effective variability. 
In this regime, the system behaves nearly deterministically, and we expect $(Q,S)$ and $(R,S)$ to be close to optimal. The intuition that $(R,S)$ is near-optimal is threefold: (i) Infrequent orders imply that aggregate inter-order demand is less variable, bringing the system closer to being deterministic; 
(ii) in a deterministic setting, it is optimal to place orders at regularly spaced times; and (iii) if optimal order times are approximately regular, a policy that optimizes order-up-to levels and (deterministic) review times, namely $(R,S)$, is near-optimal. The slight advantage of $(Q,S)$ over $(R,S)$ likely stems from its additional adaptivity in timing orders. Consequently, in a system with low effective variability (low demand variability and large fixed cost), there is little {room} for the NN policy to improve upon these benchmarks.
}





In the medium-variability setting (Table \ref{table:12dmed}), 
the NN policy 
often delivers sizable gains over $(Q,S)$ and $(R,S)$, with the advantage growing as the backlog penalty increases. 
This aligns with the intuitive explanation discussed in the low-variability setting: The NN policy appears to learn the ordering boundary more accurately. 
The NN policy also dominates can-order in all cases, though the performance gap narrows as backlog penalties rise; 
again this observation is consistent with our earlier explanation on can-order’s proactive replenishments. 
Finally, as expected, the NN policy substantially outperforms the independent $(s,S)$ policy. 
In the high-variability setting (Table \ref{table:12dhigh}), the qualitative pattern is similar to the medium-variability cases, 
with the NN policy matching or beating the best benchmark {in all cases,} 
and often with high margins. 

\begin{table}[H]
    \centering
    \caption{\footnotesize Computational results for $d=12$ with Poisson demand and $h=2.0$ (standard errors shown as percentage of simulation average; other entries are percentage gaps vs.\ NN policy, with standard errors in percentage points).}\label{table:12dlow}
    \resizebox{0.95\textwidth}{!}
    {%
    \footnotesize
    \begin{tabular}{@{}llrrrrrr@{}}
    \toprule
    \multicolumn{2}{c}{} &  & \multicolumn{5}{c}{Cost of Other Policies (Relative to NN Policy)} \\ 
    \cmidrule(lr){4-8}
    $c_0$ & $p$ & Cost of NN Policy & $(R,S)$ & $(Q,S)$ & $(R,Q,S)$ & Can-Order & Ind.\ $(s,S)$ \\
    \midrule
    \multirow{3}{*}{20}  & 10  & $6236.59 \pm 0.06\%$  & $0.79\% \pm 0.06\%$  & $-0.10\% \pm 0.06\%$ & $-0.09\% \pm 0.06\%$ & $10.48\% \pm 0.07\%$ & $96.06\% \pm 0.12\%$  \\
                         & 50  & $7209.34 \pm 0.07\%$  & $2.58\% \pm 0.07\%$  & $1.12\% \pm 0.07\%$  & $1.12\% \pm 0.07\%$  & $7.02\% \pm 0.08\%$  & $79.67\% \pm 0.13\%$  \\
                         & 100 & $7524.61 \pm 0.07\%$  & $3.83\% \pm 0.07\%$  & $2.13\% \pm 0.07\%$  & $2.13\% \pm 0.07\%$  & $3.63\% \pm 0.07\%$  & $74.65\% \pm 0.12\%$  \\
    \midrule
    \multirow{3}{*}{100} & 10  & $10164.88 \pm 0.06\%$ & $0.60\% \pm 0.06\%$  & $-0.08\% \pm 0.06\%$ & $-0.08\% \pm 0.06\%$ & $15.53\% \pm 0.07\%$ & $149.11\% \pm 0.15\%$ \\
                         & 50  & $11752.23 \pm 0.08\%$ & $2.13\% \pm 0.08\%$  & $0.93\% \pm 0.08\%$  & $0.91\% \pm 0.08\%$  & $7.60\% \pm 0.09\%$  & $129.14\% \pm 0.18\%$ \\
                         & 100 & $12229.83 \pm 0.09\%$ & $3.36\% \pm 0.10\%$  & $1.93\% \pm 0.09\%$  & $1.93\% \pm 0.09\%$  & $6.67\% \pm 0.10\%$  & $120.94\% \pm 0.21\%$ \\
    \midrule
    \multirow{3}{*}{200} & 10  & $13082.89 \pm 0.06\%$ & $0.52\% \pm 0.06\%$  & $-0.05\% \pm 0.06\%$ & $-0.05\% \pm 0.06\%$ & $14.37\% \pm 0.07\%$ & $169.87\% \pm 0.17\%$ \\
                         & 50  & $15254.12 \pm 0.08\%$ & $0.59\% \pm 0.08\%$  & $-0.43\% \pm 0.08\%$ & $-0.42\% \pm 0.08\%$ & $7.83\% \pm 0.08\%$  & $146.19\% \pm 0.19\%$ \\
                         & 100 & $15723.30 \pm 0.08\%$ & $2.54\% \pm 0.08\%$  & $1.29\% \pm 0.08\%$  & $1.29\% \pm 0.08\%$  & $7.84\% \pm 0.09\%$  & $138.69\% \pm 0.19\%$ \\
    \bottomrule
    \end{tabular}
    }
\end{table}

\begin{table}[H]
    \centering
    \caption{\footnotesize Computational results for $d=12$ with negative binomial demand ($\operatorname{CV}=0.5$) and $h=2.0$ (standard errors shown as percentage of simulation average; other entries are percentage gaps vs.\ NN policy, with standard errors in percentage points).}\label{table:12dmed}
    \resizebox{0.95\textwidth}{!}
    {%
    \footnotesize
    \begin{tabular}{@{}llrrrrrr@{}}
    \toprule
    \multicolumn{2}{c}{} & \textbf{} & \multicolumn{5}{c}{Cost of Other Policies (Relative to NN Policy)} \\ 
    \cmidrule(lr){4-8}
    $c_0$ & $p$ & Cost of NN Policy & $(R,S)$ & $(Q,S)$ & $(R,Q,S)$ & Can-Order & Ind.\ $(s,S)$ \\
    \midrule
    \multirow{3}{*}{20}  & 10  & $8110.75 \pm 0.20\%$  & $8.04\% \pm 0.21\%$  & $0.89\% \pm 0.20\%$  & $0.89\% \pm 0.20\%$  & $16.16\% \pm 0.23\%$ & $51.02\% \pm 0.30\%$  \\
                         & 50  & $10522.38 \pm 0.20\%$ & $21.56\% \pm 0.25\%$ & $7.52\% \pm 0.22\%$  & $7.52\% \pm 0.22\%$  & $1.86\% \pm 0.21\%$  & $30.82\% \pm 0.27\%$  \\
                         & 100 & $11459.16 \pm 0.23\%$ & $28.03\% \pm 0.29\%$ & $11.06\% \pm 0.25\%$ & $11.06\% \pm 0.25\%$ & $3.56\% \pm 0.23\%$  & $29.24\% \pm 0.29\%$  \\
    \midrule
    \multirow{3}{*}{100} & 10  & $13087.53 \pm 0.22\%$ & $4.77\% \pm 0.24\%$  & $0.61\% \pm 0.23\%$  & $0.61\% \pm 0.23\%$  & $19.65\% \pm 0.27\%$ & $94.14\% \pm 0.44\%$  \\
                         & 50  & $16637.16 \pm 0.21\%$ & $14.43\% \pm 0.24\%$ & $6.53\% \pm 0.23\%$  & $6.54\% \pm 0.23\%$  & $5.24\% \pm 0.22\%$  & $65.13\% \pm 0.35\%$  \\
                         & 100 & $17658.49 \pm 0.21\%$ & $20.92\% \pm 0.26\%$ & $11.06\% \pm 0.24\%$ & $11.17\% \pm 0.24\%$ & $2.16\% \pm 0.22\%$  & $59.07\% \pm 0.34\%$  \\
    \midrule
    \multirow{3}{*}{200} & 10  & $16396.74 \pm 0.21\%$ & $4.09\% \pm 0.22\%$  & $0.80\% \pm 0.21\%$  & $0.80\% \pm 0.21\%$  & $21.67\% \pm 0.26\%$ & $116.31\% \pm 0.46\%$ \\
                         & 50  & $20728.23 \pm 0.24\%$ & $11.83\% \pm 0.27\%$ & $5.77\% \pm 0.26\%$  & $5.77\% \pm 0.26\%$  & $7.10\% \pm 0.26\%$  & $84.37\% \pm 0.44\%$  \\
                         & 100 & $21939.27 \pm 0.22\%$ & $17.43\% \pm 0.26\%$ & $9.92\% \pm 0.24\%$  & $9.92\% \pm 0.24\%$  & $2.55\% \pm 0.23\%$  & $76.41\% \pm 0.39\%$  \\
    \bottomrule
    \end{tabular}
    }
\end{table}

\vspace{-.5cm}

\begin{table}[H]
    \centering
    \caption{\footnotesize Computational results for $d=12$ with negative binomial demand ($\operatorname{CV}=1.0$) and $h=2.0$ (standard errors shown as percentage of simulation average; other entries are percentage gaps vs.\ NN policy, with standard errors in percentage points).}
    \label{table:12dhigh}
    \resizebox{0.95\textwidth}{!}
    {%
    \footnotesize
    \begin{tabular}{@{}llrrrrrr@{}}
    \toprule
    \multicolumn{2}{c}{} & \textbf{} & \multicolumn{5}{c}{Cost of Other Policies (Relative to NN Policy)} \\ 
    \cmidrule(lr){4-8}
    $c_0$ & $p$ & Cost of NN Policy & $(R,S)$ & $(Q,S)$ & $(R,Q,S)$ & Can-Order & Ind.\ $(s,S)$ \\
    \midrule
    \multirow{3}{*}{20}  & 10  & $8465.58 \pm 0.31\%$ & $22.44\% \pm 0.37\%$ & $-0.50\% \pm 0.30\%$ & $-0.50\% \pm 0.30\%$ & $19.06\% \pm 0.36\%$ & $31.21\% \pm 0.40\%$ \\
                         & 50  & $13251.55 \pm 0.29\%$ & $43.62\% \pm 0.42\%$ & $3.68\% \pm 0.30\%$  & $3.68\% \pm 0.30\%$  & $2.22\% \pm 0.30\%$  & $9.48\% \pm 0.32\%$  \\
                         & 100 & $16286.62 \pm 0.33\%$ & $47.07\% \pm 0.48\%$ & $5.74\% \pm 0.35\%$  & $5.74\% \pm 0.35\%$  & $0.62\% \pm 0.33\%$  & $8.73\% \pm 0.36\%$  \\
    \midrule
    \multirow{3}{*}{100} & 10  & $15485.99 \pm 0.28\%$ & $17.48\% \pm 0.32\%$ & $0.90\% \pm 0.28\%$  & $0.90\% \pm 0.28\%$  & $24.67\% \pm 0.34\%$ & $57.55\% \pm 0.43\%$ \\
                         & 50  & $22180.68 \pm 0.27\%$ & $40.54\% \pm 0.38\%$ & $5.75\% \pm 0.28\%$  & $5.75\% \pm 0.28\%$  & $2.51\% \pm 0.28\%$  & $24.66\% \pm 0.33\%$ \\
                         & 100 & $23886.56 \pm 0.30\%$ & $58.31\% \pm 0.47\%$ & $13.53\% \pm 0.34\%$ & $13.53\% \pm 0.34\%$ & $1.92\% \pm 0.31\%$  & $24.14\% \pm 0.37\%$ \\
    \midrule
    \multirow{3}{*}{200} & 10  & $20320.22 \pm 0.28\%$ & $14.02\% \pm 0.32\%$ & $1.15\% \pm 0.29\%$  & $1.15\% \pm 0.29\%$  & $24.77\% \pm 0.35\%$ & $70.28\% \pm 0.48\%$ \\
                         & 50  & $27493.10 \pm 0.30\%$ & $38.11\% \pm 0.41\%$ & $10.49\% \pm 0.33\%$ & $10.49\% \pm 0.33\%$ & $5.92\% \pm 0.32\%$  & $39.64\% \pm 0.42\%$ \\
                         & 100 & $29768.37 \pm 0.33\%$ & $51.82\% \pm 0.50\%$ & $16.35\% \pm 0.39\%$ & $16.35\% \pm 0.39\%$ & $2.48\% \pm 0.34\%$  & $34.56\% \pm 0.45\%$ \\
    \bottomrule
    \end{tabular}
    }
\end{table}

\paragraph{Low-dimensional results.} Table \ref{tab:2d_results} and Figures \ref{fig:mmm_case} and \ref{fig:comparison_pane} 
summarize our computational results in the two-dimensional test problems. 
To estimate performance of the NN policy, we average over 1,000 sample paths, each of length 10,000 periods. 
As mentioned earlier, the NN performance estimates are within $1\%$ of the MDP solutions. 
Across the base case (Figure~\ref{fig:mmm_case}) and the six variants (Figure~\ref{fig:comparison_pane}), 
the NN policy closely tracks the MDP solution: The decision boundaries largely overlap (dense purple regions), 
with minor local deviations near extremely-valued states. 
These visual observations are consistent with the small optimality gaps in Table~\ref{tab:2d_results} {(\(\approx\) $-0.01\%$ to $-0.69\%$ relative gap between the MDP solution and the NN policy across different cases)}, 
indicating that not only does the NN policy perform near-optimally, 
but it also captures the geometry of the optimal policy. 
Among the benchmark policies, the can-order policy performs the best and closely matches the MDP solution in all seven test problems. Recall the intuition that a good reference policy should closely track the optimal policy. Indeed, the can-order policy is a strong candidate, and we used it as the reference policy for training in most instances.
One exception is the high-variance case ($\mathrm{CV} = 1.0$, see the 3rd row of Table~\ref{tab:2d_results}). In this case, we use the optimal policy derived from the MDP. That is, we use its order-up-to vector as the mean of $Z$ (see Eq.~\eqref{eq:reference-S}), and compute the average number of orders per year and set it as the arrival rate $\lambda$ of the Poisson process that underlies our reference process. Among the reference policies we considered, this led to the best performance, supporting the intuition that under a good reference policy, sample paths of the state process should occupy parts of the state space most frequently visited by an optimal policy.
Still, the can-order policy is slightly cheaper than the NN policy in this case ($-0.09\%$). Intuitively, because high demand variability makes large backlogs more likely, the can-order policy's proactive replenishment is particularly effective in this regime.
}

\begin{table}[H]
    \centering
    \caption{\footnotesize Computational results for the 2-dimensional test problems (standard errors shown as percentage of simulation average; other entries are percentage gaps vs.\ NN policy, with standard errors in percentage points).}
    \label{tab:2d_results}
    \resizebox{0.95\textwidth}{!}{%
    \footnotesize
    \begin{tabular}{@{}lrrrrrr@{}}
    \toprule
    & & \multicolumn{5}{c}{Cost of Other Policies (Relative to NN Policy)} \\
    \cmidrule(lr){3-7}
    Test Instance & Cost of NN Policy & MDP & $(R,S)$ & $(Q,S)$ & Can-Order & Ind.\ $(s,S)$ \\
    \midrule
    Base Case  & $2940.87 \pm 0.12\%$ & $-0.21\% \pm 0.12\%$ & $47.27\% \pm 0.18\%$ & $9.66\% \pm 0.13\%$ & $0.77\% \pm 0.12\%$ & $14.95\% \pm 0.14\%$ \\
    Low CV     & $2606.97 \pm 0.05\%$ & $-0.01\% \pm 0.05\%$ & $10.09\% \pm 0.05\%$ & $2.88\% \pm 0.05\%$ & $3.79\% \pm 0.05\%$ & $26.42\% \pm 0.06\%$ \\
    High CV    & $3236.72 \pm 0.23\%$ & $-0.69\% \pm 0.22\%$ & $118.18\% \pm 0.50\%$ & $7.44\% \pm 0.24\%$ & $-0.09\% \pm 0.23\%$ & $5.82\% \pm 0.24\%$ \\
    $c_0=20$   & $2066.99 \pm 0.12\%$ & $-0.08\% \pm 0.12\%$ & $58.07\% \pm 0.20\%$ & $9.50\% \pm 0.14\%$ & $0.07\% \pm 0.12\%$ & $10.60\% \pm 0.14\%$ \\
    $c_0=100$  & $3934.66 \pm 0.12\%$ & $-0.15\% \pm 0.12\%$ & $38.90\% \pm 0.16\%$ & $8.90\% \pm 0.13\%$ & $1.02\% \pm 0.12\%$ & $18.26\% \pm 0.14\%$ \\
    $p=10$    & $2589.48 \pm 0.11\%$ & $-0.68\% \pm 0.11\%$ & $22.85\% \pm 0.14\%$ & $2.60\% \pm 0.12\%$ & $1.85\% \pm 0.12\%$ & $18.32\% \pm 0.14\%$ \\
    $p=100$    & $3071.26 \pm 0.13\%$ & $-0.04\% \pm 0.13\%$ & $56.59\% \pm 0.20\%$ & $11.85\% \pm 0.14\%$ & $0.19\% \pm 0.13\%$ & $14.55\% \pm 0.15\%$ \\
    \bottomrule
    \end{tabular}
    }
\end{table}

\begin{figure}[H]
    \centering
    \includegraphics[width=0.75\textwidth]{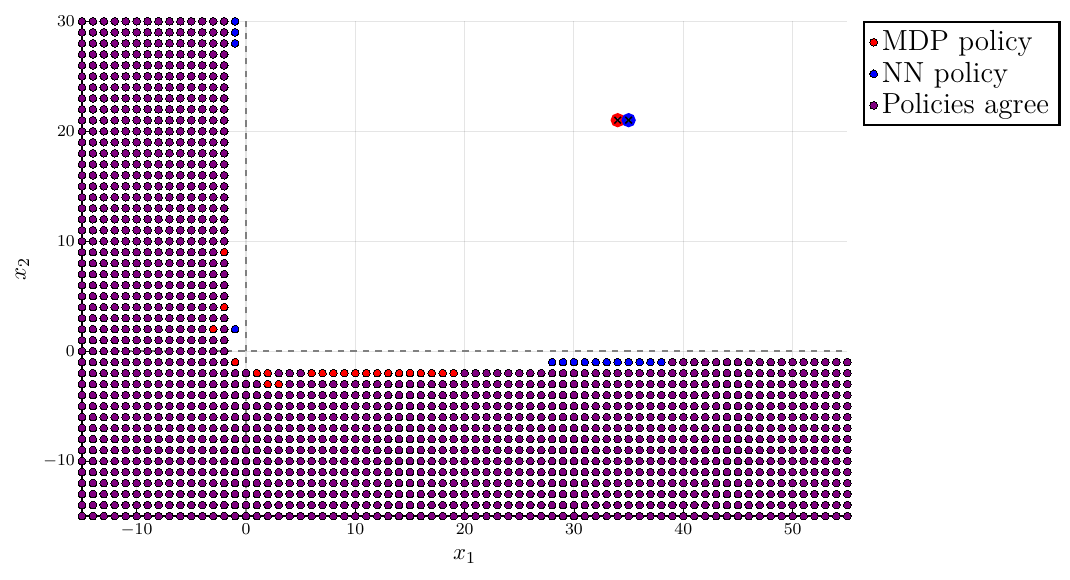}
    \caption{\footnotesize MDP and neural network policies (base case $d=2$).}
    \label{fig:mmm_case}
\end{figure}


\begin{figure}[H]
    \centering
    \begin{subfigure}[b]{0.31\textwidth}
        \includegraphics[width=\linewidth]{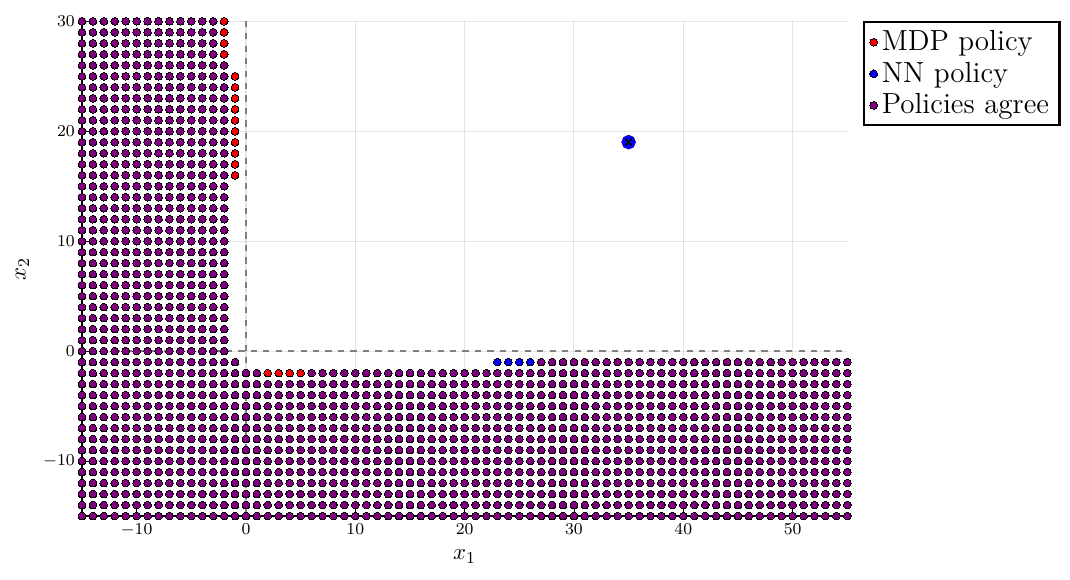}
        \caption{Low CV}
        \label{fig:lmm}
    \end{subfigure}
    \begin{subfigure}[b]{0.31\textwidth}
        \includegraphics[width=\linewidth]{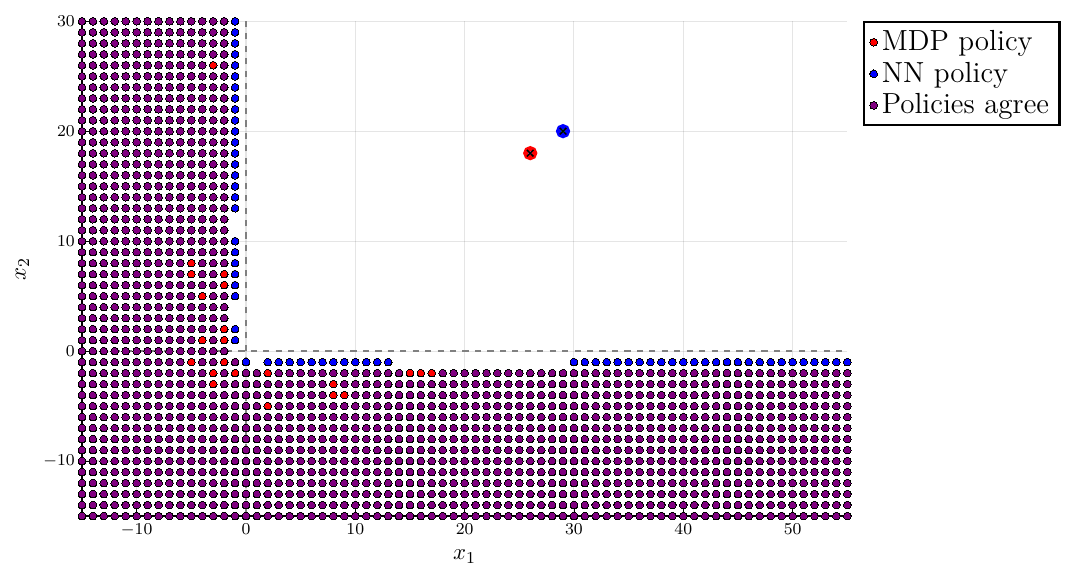}
        \caption{High CV}
        \label{fig:hmm}
    \end{subfigure}





        \begin{subfigure}[b]{0.31\textwidth}
            \includegraphics[width=\linewidth]{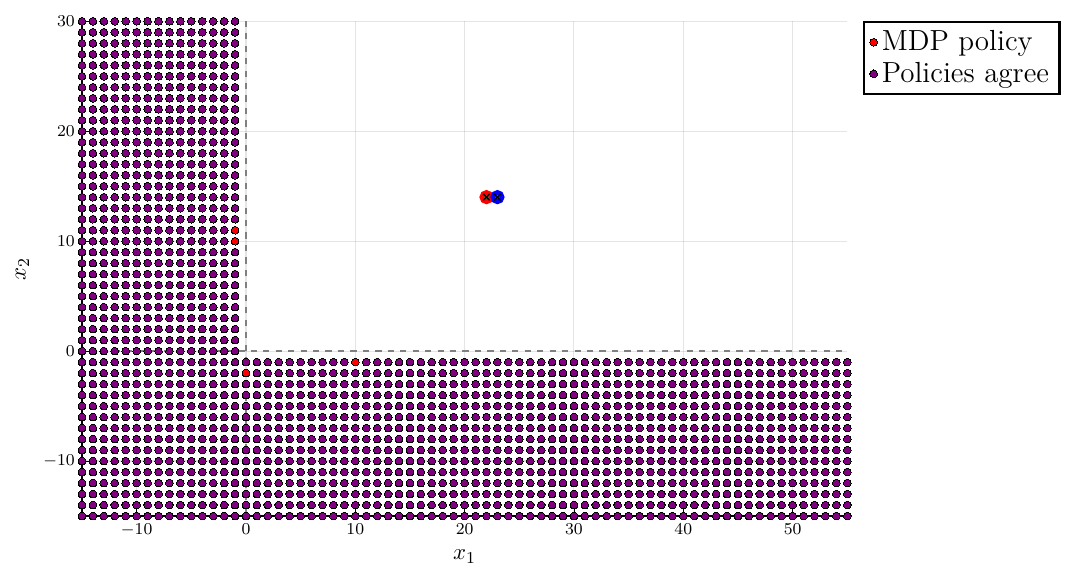}
            \caption{$c_0=20$}
            \label{fig:mlm}
        \end{subfigure}
        \vspace{5mm}
        \begin{subfigure}[b]{0.31\textwidth}
            \includegraphics[width=\linewidth]{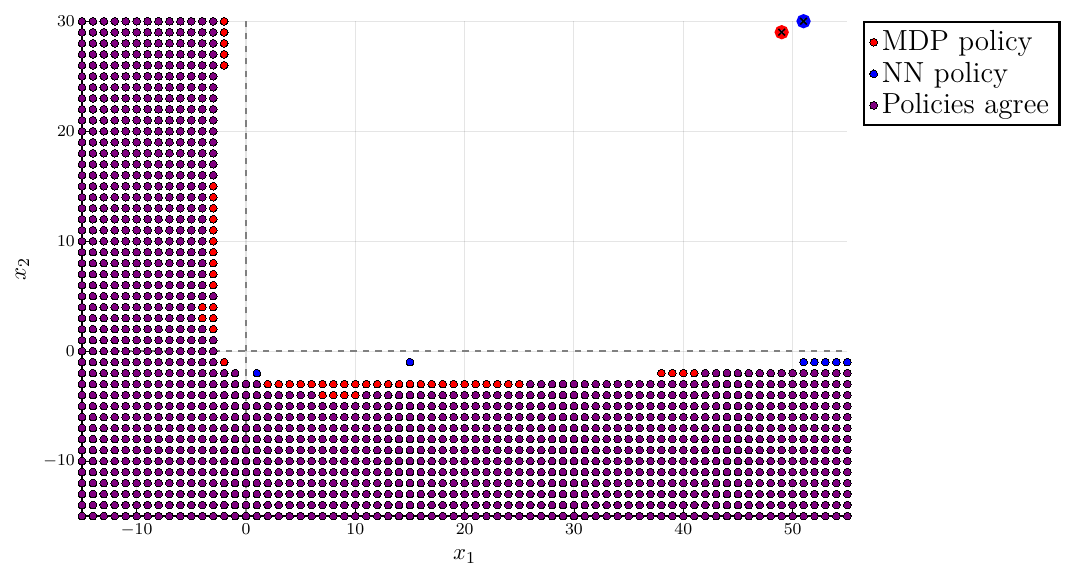}
            \caption{$c_0=100$}
            \label{fig:mhm}
        \end{subfigure}
    
        \begin{subfigure}[b]{0.31\textwidth}
            \includegraphics[width=\linewidth]{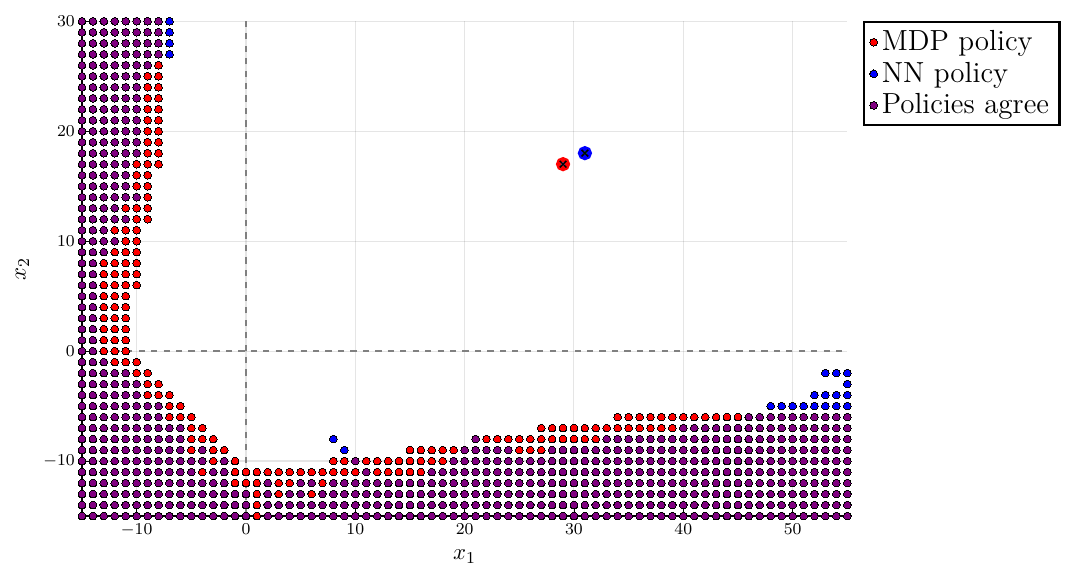}
            \caption{$p=10$}
        \label{fig:mml}
        \end{subfigure}
        \begin{subfigure}[b]{0.31\textwidth}
            \includegraphics[width=\linewidth]{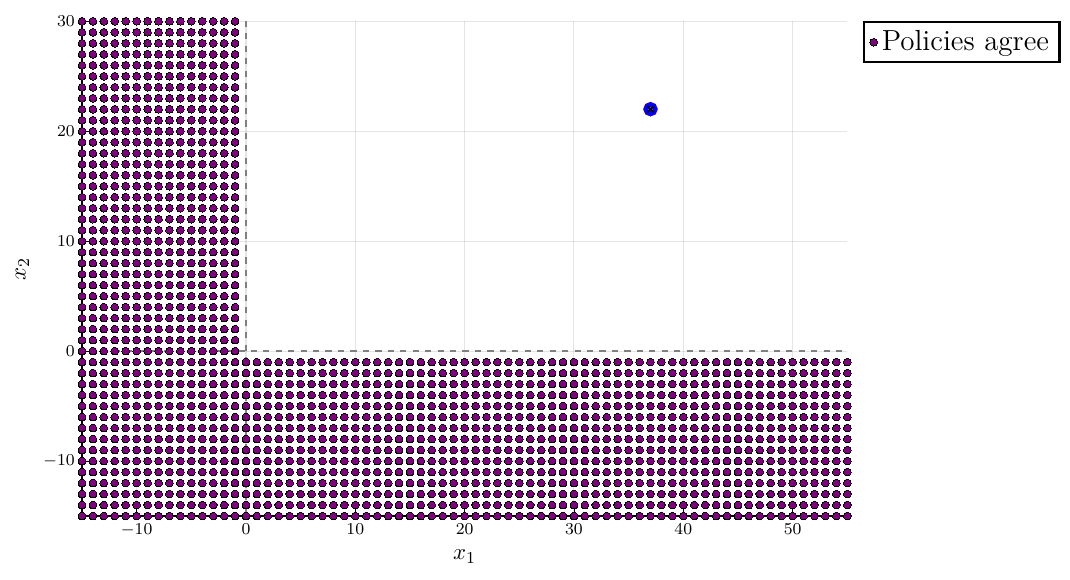}
            \caption{$p=100$}
            \label{fig:mmh}
        \end{subfigure}
    
    \caption{\footnotesize Comparison of MDP and neural network policies across the remaining six two-dimensional test problems.}
    \label{fig:comparison_pane}
\end{figure}

{\paragraph{Results in dimension 50.} Finally, we turn to the 50-dimensional test problems.
We estimate NN performance by averaging over {100} sample paths per test problem. 
Standard errors are small {($\approx 0.05\% - 0.20\%$)}, indicating reliable estimates.
The NN policy beats the best benchmark {in all cases}, with performance gaps up to {5.59\%}.
Under low variability, NN performs comparably to {$(R,S)$ and $(Q,S)$}, while clearly outperforming can-order and independent $(s,S)$ policies.
As variability increases, NN’s advantage over {$(R,S)$ and $(Q,S)$} generally widens, whereas gaps versus can-order and independent $(s,S)$ remain relatively stable.}


\begin{table}[H]
\centering
\caption{\footnotesize Computational results for $d=50$ (standard errors shown as percentage of simulation average; other entries are percentage gaps vs.\ NN policy, with standard errors in percentage points).}\label{table:50d}
\resizebox{0.95\textwidth}{!}
{\footnotesize
\begin{tabular}{@{}llrrrrrr@{}}
\toprule
\multicolumn{2}{c}{} & \textbf{} & \multicolumn{5}{c}{Cost of Other Policies (Relative to NN Policy)} \\ 
\cmidrule(lr){4-8}
CV & $c_0$ & Cost of NN Policy & $(R,S)$ & $(Q,S)$ & $(R,Q,S)$ & Can-Order & Ind.\ $(s,S)$ \\
\midrule
\multirow{3}{*}{Poisson} & 50  & $21874.05 \pm 0.05\%$ & $0.87\% \pm 0.05\%$  & $0.47\% \pm 0.05\%$ & $0.46\% \pm 0.05\%$ & $25.04\% \pm 0.06\%$ & $178.71\% \pm 0.14\%$ \\
                         & 150 & $30544.01 \pm 0.05\%$ & $0.75\% \pm 0.05\%$  & $0.45\% \pm 0.05\%$ & $0.46\% \pm 0.05\%$ & $16.06\% \pm 0.06\%$ & $263.51\% \pm 0.18\%$ \\
                         & 250 & $36305.63 \pm 0.06\%$ & $0.72\% \pm 0.06\%$  & $0.40\% \pm 0.06\%$ & $0.40\% \pm 0.06\%$ & $20.32\% \pm 0.07\%$ & $299.10\% \pm 0.22\%$ \\
\midrule
\multirow{3}{*}{0.5}     & 50  & $36287.52 \pm 0.09\%$ & $7.18\% \pm 0.10\%$  & $3.86\% \pm 0.09\%$ & $3.89\% \pm 0.09\%$ & $14.71\% \pm 0.10\%$ & $75.33\% \pm 0.16\%$  \\
                         & 150 & $48528.69 \pm 0.11\%$ & $5.21\% \pm 0.11\%$  & $3.11\% \pm 0.11\%$ & $3.10\% \pm 0.11\%$ & $20.16\% \pm 0.13\%$ & $136.02\% \pm 0.26\%$ \\
                         & 250 & $56102.44 \pm 0.10\%$ & $4.51\% \pm 0.11\%$  & $2.80\% \pm 0.10\%$ & $2.80\% \pm 0.10\%$ & $20.89\% \pm 0.12\%$ & $167.71\% \pm 0.27\%$ \\
\midrule
\multirow{3}{*}{1.0}     & 50  & $51997.42 \pm 0.18\%$ & $15.52\% \pm 0.21\%$ & $2.34\% \pm 0.18\%$ & $2.34\% \pm 0.18\%$ & $13.96\% \pm 0.21\%$ & $26.63\% \pm 0.23\%$  \\
                         & 150 & $72719.95 \pm 0.18\%$ & $15.54\% \pm 0.20\%$ & $4.90\% \pm 0.18\%$ & $4.90\% \pm 0.18\%$ & $22.44\% \pm 0.22\%$ & $59.40\% \pm 0.28\%$  \\
                         & 250 & $85697.70 \pm 0.20\%$ & $14.39\% \pm 0.23\%$ & $5.59\% \pm 0.21\%$ & $5.59\% \pm 0.21\%$ & $24.39\% \pm 0.25\%$ & $77.96\% \pm 0.35\%$  \\
\bottomrule
\end{tabular}
}
\end{table}

\section{Concluding Remarks}\label{sec:conclusion}
In this paper, we tackle a classical inventory control problem in high dimensions by formulating it as an impulse control problem and approximating its solution via deep-learning techniques. 
{Through extensive hyperparameter tuning, a tailored training algorithm, and careful engineering of policy extraction, we were able to match or beat the best available benchmarks.} 
{We note that the vast array of SJRP policies surveyed in \cite{khouja2008review} is largely tailored to settings 
that also feature item-specific minor fixed costs, with the \emph{modified} periodic review policy of \citet{atkins1988periodic} serving as a representative example. 
Nonetheless, under the single joint fixed cost structure we consider, 
these policies either reduce to or are dominated by one of our main benchmarks: 
For example, the modified periodic review policy of \citet{atkins1988periodic} and 
the $(R, s, S)$ policy of \citet{viswanathan1997note} 
are both dominated by the optimized $(R, S)$ policy, 
and the $(Q, s, S)$ policy of \citet{nielsen2005analytical} is dominated by the optimized $(Q, S)$ policy.}

{While we focus on the SJRP with linear holding and backlogging costs and unconstrained order vectors (subject only to non-negativity constraints), the methodology extends naturally in several directions. Within the present framework, it already accommodates general Lipschitz continuous inventory cost functions $f$ (cf., e.g., growth conditions (2.4) and (2.5) in \cite{kharroubi2010backward}) as well as capacity constraints on order vectors, for example, requiring the total order size not to exceed a given capacity.} 
{Another natural extension is to the lost sales case, where unmet demand is forfeited rather than backlogged.} {In this setting, the inventory state is constrained to the nonnegative orthant by minimally pushing it back into the state space whenever it reaches the boundary. The associated pushing process on each face of the orthant represents the cumulative lost demand for the corresponding product. Under this formulation, one accounts for lost-demand costs rather than the backlogging costs considered here. The resulting HJB equation is therefore posed on the nonnegative orthant and is accompanied by Neumann-type boundary conditions arising from the pushing, or reflection, at the boundary. Despite these differences, we expect that our approach can be extended to address this problem, although doing so would require resolving a number of technical subtleties. We leave this extension to future work.}

{Beyond these extensions, incorporating lead times represents another important direction for future research. In this regard, \cite{bar1995explicit} study an impulse control problem with fixed delivery lags. They show that the optimal policy depends only on the aggregate state, defined as the sum of on-hand inventory and outstanding orders. They also show that} {the HJB equation takes a modified form with an intervention operator that averages holding and backlog costs over demand during the lead time. These structural results suggest that our framework may extend to this setting, though the requisite modifications to the stochastic identity \eqref{eq:keyidentityprop} remain to be developed.}
{Looking further ahead, it seems natural to extend our methodology to more
complex supply-chain models, such as those with demand spillovers between
facilities.}

{Complementary to these problem extensions, a natural direction concerns our use of the Brownian approximation 
for cumulative demand:  
Reflected Brownian motions or subordinator-like processes are potential alternatives.  
Beyond the inventory setting itself, our approach not only provides effective solutions to supply chain and inventory management problems,} 
but also opens up avenues for applying these numerical methods to broader classes of impulse control problems. 
In particular, the techniques developed here {may be} used to tackle impulse control problems in other fields, 
most notably in financial engineering — for example, cash flow management and investment decisions \citep{altarovici2017optimal,liu2004optimal}. 
{We leave these explorations for future work.}


\begin{singlespace}
\bibliography{references}
\bibliographystyle{apalike}
\end{singlespace}


\newpage
\begin{appendix}


\section{Formal Derivation of Stochastic Identity \eqref{eq:keyidentityprop}}\label{app:A}
{In this section, we provide a heuristic derivation of the stochastic identity \eqref{eq:keyidentityprop}. We proceed in three steps. 
First, formally applying a generalized It\^o's lemma for jump diffusions, 
we arrive at Eq. \eqref{eq:align_ito}, which expresses the value function $V(x)$ as the sum of 
the right-hand side of the inequality defining \eqref{eq:keyidentityprop} 
and two slack quantities $A$ and $B$ (to be defined below). 
Second, given \eqref{eq:align_ito}, Lemma~\ref{lemma:derivation} provides 
an equivalent reformulation of \eqref{eq:keyidentityprop} in terms of two conditions 
on the slack quantities $A$ and $B$. 
Third, we argue that both conditions are natural consequences of the HJB equation \eqref{eq:qvi}, 
leading to the desired identity \eqref{eq:keyidentityprop}.}

Recall the value function $V$ defined in Eq.~\eqref{eq:impulsevaluefunction}. $V$ is a formal solution to the HJB equation \eqref{eq:qvi}.
Let $T>0$, and let $U(\cdot;\alpha,\lambda,\Phi)$ be a reference policy defined by Eq.~\eqref{eq:uncontrolled_control} in Section~\ref{sec:SDE}. Also recall the corresponding reference process $\tilde{X}(\cdot)$ given by
\eqref{eq:uncontrolled_reference}:
\[
\tilde{X}(t)=x-\mu t-\sigma B(t)+\sum_{j:T_j\le t} Y_j.
\]
Formally applying a generalized Itô's lemma for jump diffusions to the function $(x,t)\mapsto e^{-rt}V(x)$ (see, e.g., \cite{applebaum2009levy}), we have
\begin{equation}\label{eq:generalized_ito}
\begin{aligned}
e^{-rT}V(\tilde{X}(T))-V(x)
&=\int_0^T e^{-rt}\bigl(-rV-\mathcal{L}V\bigr)(\tilde{X}(t-))dt
-\int_0^T e^{-rt}\nabla V(\tilde{X}(t-))^\top\sigma\,dB(t) \\
&\quad+\sum_{j:T_j\le T}e^{-rT_j}\Big[V(\tilde{X}(T_j))-V(\tilde{X}(T_j-))\Big].
\end{aligned}
\end{equation}
Then
\begin{equation}\label{eq:align_ito}
\begin{aligned}
V(x)
&=e^{-rT}V(\tilde{X}(T))
-\int_0^T e^{-rt}\bigl(-rV-\mathcal{L}V\bigr)(\tilde{X}(t-))dt
+\int_0^T e^{-rt}\nabla V(\tilde{X}(t-))^\top\sigma\,dB(t) \\
&\quad-\sum_{j:T_j\le T}e^{-rT_j}\Big[V(\tilde{X}(T_j))-V(\tilde{X}(T_j-))\Big] \\
&=e^{-rT}V(\tilde{X}(T))
+\int_0^T e^{-rt}\bigl[(\mathcal{L}+r)V-f\bigr](\tilde{X}(t-))dt
+\int_0^T e^{-rt}f(\tilde{X}(t-))dt \\
&\quad+\int_0^T e^{-rt}\nabla V(\tilde{X}(t-))^\top\sigma\,dB(t) \\
&\quad-\sum_{j:T_j\le T}e^{-rT_j}\Big[V\big(\tilde{X}(T_j-)+Y_j\big)+c(Y_j)-V(\tilde{X}(T_j-))\Big]
+\sum_{j:T_j\le T}e^{-rT_j}c(Y_j) \\
&=e^{-rT}V(\tilde{X}(T))
+\int_0^T e^{-rt}f(\tilde{X}(t-))dt
+\int_0^T e^{-rt}\nabla V(\tilde{X}(t-))^\top\sigma\,dB(t)
+\sum_{j:T_j\le T}e^{-rT_j}c(Y_j) \\
&\quad+\int_0^T e^{-rt}\bigl[(\mathcal{L}+r)V-f\bigr](\tilde{X}(t-))dt\\
&\quad+\sum_{j:T_j\le T}e^{-rT_j}\Big[V(\tilde{X}(T_j-))-\big(V(\tilde{X}(T_j-)+Y_j)+c(Y_j)\big)\Big].
\end{aligned}
\end{equation}
Consider {the last two terms on the right-hand side} of \eqref{eq:align_ito}. For notational convenience, write
\begin{equation}
A(\tilde{X}):=\int_{0}^{T}e^{-rt}[(\mathcal{L}+r)V-f](\tilde{X}(t-))dt,
\label{eq:A}
\end{equation}
and
\begin{equation}
B(\tilde{X}):=\sum_{j:T_j \le T}e^{-rT_{j}}[V(\tilde{X}(T_{j}-))-(V(\tilde{X}(T_{j}-)+Y_{j}) + c(Y_{j}))],
\label{eq:B}
\end{equation}
where we include the argument $\tilde{X}$ to emphasize the dependence of $A$ and $B$ on the reference process $\tilde{X}$.
The following lemma provides a tight characterization of identity \eqref{eq:keyidentityprop} in terms of $A$ and $B$.
\begin{lemma}\label{lemma:derivation}
Let $x \in \mathbb{R}^d$. Suppose \eqref{eq:align_ito} holds a.s. Then, the stochastic identity \eqref{eq:keyidentityprop} holds if and only if the following is true:
\begin{enumerate}
    \item[\textnormal{(i)}] $A(\tilde{X})+B(\tilde{X})\le0$ a.s.; and
    \item[\textnormal{(ii)}] for any $w>0$, $\P(A(\tilde{X})+B(\tilde{X})>-w)>0$.
\end{enumerate}
\end{lemma}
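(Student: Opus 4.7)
The plan is to reduce the stochastic identity \eqref{eq:keyidentityprop} to a statement about the essential supremum of the random variable $W(\tilde X):=A(\tilde X)+B(\tilde X)$, and then observe that conditions (i)--(ii) are the textbook characterization of $\mathrm{ess\,sup}(W(\tilde X))=0$.

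First, I would rewrite the identity in the form used in the lemma. Using \eqref{eq:align_ito}, the random variable on the right-hand side of the inequality appearing inside the supremum in \eqref{eq:keyidentityprop}, namely
\[
R(\tilde X):=e^{-rT}V(\tilde X(T))+\int_0^T e^{-rt}f(\tilde X(t-))dt+\sum_{j:T_j\le T}e^{-rT_j}c(Y_j)+\int_0^T e^{-rt}\nabla V(\tilde X(t-))^\top\sigma\,dB(t),
\]
satisfies $R(\tilde X)=V(x)-W(\tilde X)$ almost surely. Hence, for any deterministic $v\in\R$, the inequality $v-\int_0^T e^{-rt}\nabla V(\tilde X(t))^\top\sigma\,dB(t)\le \cdots$ in \eqref{eq:keyidentityprop} is equivalent to $W(\tilde X)\le V(x)-v$ a.s.

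Next, I would compute the supremum. Writing $u:=V(x)-v$, the set $\{v\in\R:W(\tilde X)\le V(x)-v\text{ a.s.}\}$ corresponds bijectively (and order-reversingly) to $\{u\in\R:W(\tilde X)\le u\text{ a.s.}\}$, and the infimum of the latter set is, by definition, $\mathrm{ess\,sup}(W(\tilde X))$. Consequently,
\[
\sup\bigl\{v\in\R:v\le R(\tilde X)\text{ a.s.}\bigr\}=V(x)-\mathrm{ess\,sup}(W(\tilde X)).
\]
Therefore, the stochastic identity \eqref{eq:keyidentityprop} holds if and only if $\mathrm{ess\,sup}(W(\tilde X))=0$.

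Finally, I would verify that $\mathrm{ess\,sup}(W(\tilde X))=0$ is equivalent to the conjunction of (i) and (ii). The ``$\Rightarrow$'' direction is immediate: $\mathrm{ess\,sup}(W(\tilde X))=0$ forces $W(\tilde X)\le 0$ a.s., giving (i); and if $\P(W(\tilde X)>-w)=0$ for some $w>0$, then $W(\tilde X)\le -w$ a.s., yielding $\mathrm{ess\,sup}(W(\tilde X))\le -w<0$, contradicting the assumption. For ``$\Leftarrow$'', condition (i) gives $\mathrm{ess\,sup}(W(\tilde X))\le 0$, and (ii) precludes any strictly negative essential upper bound, so equality holds.

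There is no real analytical obstacle here; the proof is essentially a bookkeeping exercise. The only delicate point is making sure the sign conventions in \eqref{eq:align_ito} are tracked correctly when moving the stochastic integral and the jump/drift correction terms to the opposite side of the inequality, since a sign error would swap $\mathrm{ess\,sup}$ with $\mathrm{ess\,inf}$ and reverse condition (ii). I would therefore state the rearrangement of \eqref{eq:align_ito} explicitly as a displayed equation before invoking the essential-supremum argument.
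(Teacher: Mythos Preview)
Your proposal is correct and follows essentially the same approach as the paper's proof: both use \eqref{eq:align_ito} to rewrite membership in the constraint set as $A(\tilde X)+B(\tilde X)\le V(x)-v$ a.s., and then argue the two inequalities separately. The only cosmetic difference is that you package the argument via the notion of $\mathrm{ess\,sup}\bigl(A(\tilde X)+B(\tilde X)\bigr)=0$, whereas the paper writes out the equivalences $A+B\le 0$ a.s.\ $\Leftrightarrow V(x)\le\sup S$ and $\P(A+B>-w)>0\ \forall w>0\Leftrightarrow V(x)\ge\sup S$ directly.
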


We defer the proof of Lemma~\ref{lemma:derivation} to the end of the section. 
{By Lemma~\ref{lemma:derivation}, 
if conditions (i) and (ii) hold, then \eqref{eq:keyidentityprop} follows. Thus, 
it remains to justify conditions (i) and (ii). 
To do so, we establish (i) directly from the HJB equation \eqref{eq:qvi},
and then provide a plausibility argument for (ii).} 

By \eqref{eq:qvi}, the following pointwise inequalities hold for all $x \in \mathbb{R}^d$:
$$
(\mathcal{L}+r)V(x) - f(x) \le 0, 
\quad\text{and}\quad 
V(x) - \inf_{y \in \mathbb{R}_{+}^{d}}\bigl(V(x+y) + c(y)\bigr) \le 0.
$$
Applying the first inequality pointwise to $\tilde{X}(t-)$ and integrating against 
the nonnegative measure $e^{-rt}\,dt$ yields $A(\tilde{X}) \le 0$ a.s. 
For the second inequality, since each order $Y_j \in \mathbb{R}_{+}^{d}$, we have
$$
V(\tilde{X}(T_j-)) - \bigl(V(\tilde{X}(T_j-)+Y_j) + c(Y_j)\bigr) 
\le V(\tilde{X}(T_j-)) - \inf_{y \in \mathbb{R}_{+}^{d}}\bigl(V(\tilde{X}(T_j-)+y) + c(y)\bigr) \le 0, \quad \text{a.s.}
$$
Summing with nonnegative factors $e^{-rT_j}$ yields $B(\tilde{X}) \le 0$ a.s. 
Therefore $A(\tilde{X}) + B(\tilde{X}) \le 0$ a.s., establishing (i).

Next, we provide a plausibility argument for (ii). 
Suppose that an optimal control exists, and let $X^*$ denote the corresponding 
state process. One expects, under standard verification-theorem reasoning, 
that each optimal intervention $Y_j^*$ achieves the infimum in the intervention term of \eqref{eq:qvi}, so that 
$$
V(X^*(T_j-)) - \bigl(V(X^*(T_j-) + Y_j^*) + c(Y_j^*)\bigr) = 0, \quad \text{a.s.}
$$
for each $j$, yielding $B(X^*) = 0$ a.s. One also expects that $X^*(t-)$ 
lies in the no-action region $\{x : (\mathcal{L}+r)V(x) - f(x) = 0\}$ 
for all $t > 0$, a.s., yielding $A(X^*) = 0$ a.s.

Granted $A(X^{*}) = B(X^{*}) = 0$ a.s., 
one can argue $\P(A(\tilde{X}) + B(\tilde{X}) > -w) > 0$ for every $w > 0$ 
via a change-of-measure argument that tilts the reference process toward behaving like an optimal one. 
We do not pursue this here; arguments of this flavor appear in \cite{bouchard2009stochastic} and \cite{kharroubi2010backward}.

\begin{remark} \rm
Loosely speaking, Eq.~\eqref{eq:align_ito} takes the form of a BSDE with constrained jumps treated in \cite{kharroubi2010backward}. Indeed, writing 
$$
Y_t = e^{-rt}V(\tilde{X}(t)) \quad\text{and}\quad Z_t = e^{-rt} \sigma^\top\nabla V(\tilde{X}(t-)),
$$
we may rearrange Eq.~\eqref{eq:align_ito} to obtain
\begin{equation}
Y_0 = Y_T + \int_0^T e^{-rt}f(\tilde{X}(t-))\,dt + \sum_{j:T_j \le T} e^{-rT_j}c(Y_j) + \int_0^T Z_t^\top dB(t) + (A + B),
\label{eq:bsde}
\end{equation}
where $A$ and $B$ are defined in Eqs.~\eqref{eq:A} 
and \eqref{eq:B}, respectively. 
Eq. \eqref{eq:bsde} parallels the constrained BSDE~(1.9) in \cite{kharroubi2010backward}.
The sign of our stochastic integral differs from theirs, owing to our convention that Brownian motion drives cumulative demand. 
Moreover, since we minimize cost rather than maximize reward, when $A$ and $B$ are viewed as stochastic processes indexed by the horizon $T$, $-(A+B)$ plays the role of the nondecreasing process $\bar{K}$ in the framework of \cite{kharroubi2010backward}.
Indeed, as can be seen from the proof of Lemma~\ref{lemma:derivation}, the HJB equation implies that the integrand defining $A$ and the summands defining $B$ are nonpositive, ensuring that $-(A+B)$ is nondecreasing in the time horizon.
Condition~(ii) of Lemma~\ref{lemma:derivation} mirrors the minimality requirement for the constrained BSDE in \cite{kharroubi2010backward}.
The stochastic target formulation (cf.\ Eq.~(1.10) in \citealp{kharroubi2010backward}) provides an equivalent characterization of $V(x)$ that motivates our computational approach.
\end{remark}

\begin{proof}[Proof of Lemma~\ref{lemma:derivation}]
Fix $x$ and let
\begin{align*}
S := \bigg\{ v\in \mathbb{R} \, : \, & v-\int_{0}^{T}e^{-rt}\nabla V(\tilde{X}(t))^{\top}\sigma dB(t) \\
& \le e^{-rT}V(\tilde{X}(T))+\int_{0}^{T}e^{-rt}f(\tilde{X}(t))dt+\sum_{j:T_{j}\le T}c(Y_{j})e^{-rT_{j}},\ \textnormal{a.s.} \bigg\}
\end{align*}
Then, identity \eqref{eq:keyidentityprop} is equivalent to $V(x)=\sup S$. 
Using Eq.~\eqref{eq:align_ito}, the set $S$ admits an equivalent representation. 
Rearranging the inequality that defines membership in $S$ and substituting from  \eqref{eq:align_ito}, we obtain
$$
S = \{v \in \mathbb{R} : v \leq V(x) - A(\tilde{X}) - B(\tilde{X}) \text{ a.s.}\}.
$$
We claim that $\sup S = V(x)$ if and only if conditions (i) and (ii) in the lemma statement hold.
Indeed, for any random variable $W$, we have $\sup\{v \in \mathbb{R} : v \le W \text{ a.s.}\} = \operatorname{ess\,inf}\{W\}$, and hence
$$
\sup S = \operatorname{ess\,inf}\{V(x) - A - B\} = V(x) - \operatorname{ess\,sup}\{A + B\}.
$$
Thus, it follows that $\sup S = V(x)$ if and only if $\operatorname{ess\,sup}\{A+B\} = 0$. 
The latter equality is equivalent to the two inequalities: 
$\operatorname{ess\,sup}\{A+B\} \le 0$ and $\operatorname{ess\,sup}\{A+B\} \ge 0$. 
By definition of the essential supremum, the first inequality is equivalent to 
$A + B \le 0$ a.s., which is condition (i) of the lemma. 
The second inequality says that $A + B$ is not essentially bounded above by 
any negative number; equivalently, for every $w > 0$, the event $\{A + B > -w\}$ 
has positive probability, which is condition (ii) of the lemma.

\end{proof}

\section{Implementation Details}\label{app:B}
\subsection{Materials and Method Modifications}\label{app:B_material}
We implement two deep neural networks, $H(\cdot;\theta)$ and $G(\cdot;\vartheta)$, both consisting of {three to five} (hidden) layers. The neural networks are implemented using the \texttt{Flux.jl} package \citep{innes:2018} in the \texttt{Julia 1.11} computing language \citep{bezanson2017julia}. Currently, we conduct the numerical experiments
on a high-performance computing node equipped with an
NVIDIA H100 GPU with 80 GB memory and an AMD EPYC 9334 32-core processor, allowing up to 64 logical threads, with
694 GB of usable RAM.

{\bf Neural network architecture.} {As mentioned, our neural networks are fully connected with three to five hidden layers, using \texttt{elu} activation functions at the nodes.}
We initialize the weights with random numbers drawn from a uniform distribution using Kaiming (He) initialization \citep{he2015delving}.

{\bf Data generation.} The time horizon $T$ is tuned for each test problem, 
and we choose {the number of time intervals} $N$ large enough so that the time step satisfies $\Delta t\le 0.005$, 
which our empirical tests indicate is sufficient for accurate results. 
The tuning of the time horizon $T$ is governed by the following considerations.
In our experiments, we observe that a smaller $T$ improves training stability.
However, if $T$ is too small, an insufficient number of ordering events are observed per sample path,
degrading training quality.
A secondary consideration is that, for a fixed number of time intervals $N$,
decreasing $T$ reduces the time discretization step $\Delta t$, leading to more faithful simulation of the reference process.
In practice, we balance these considerations by choosing $T$
large enough to capture a meaningful number of ordering events
while keeping training stable.

The batch size refers to the number of sample paths generated. 
The number of iterations equals the number of epochs, as for each iteration, the entire data set, {i.e., output from Subroutine \ref{algo:euler} over all sample paths in the batch,} 
is generated and used during training. 
See the tables in Section~\ref{app:B_configs} below for more details.

{\bf Reference policy parameters.} We choose $\nu$ and $\alpha$ via extensive hyperparameter tuning. We set $\lambda$ to the average ordering rate of the {benchmark policy whose $S$ vector is used to set $\E[Z]=\psi S$.} 

{\bf NN optimizer.} The neural network parameters are updated using the \texttt{Adam} optimizer. {The joint penalty-rate and learning-rate schedules are detailed below.}

{\bf Cost scaling.} It is easy to show, under the assumed cost structure, that the impulse-control problem is invariant under any positive scaling of the cost parameters. For $\kappa>0$, consider scaled costs ${\kappa c_0,\kappa c,\kappa h,\kappa p}$. Then, by linearity, the inventory-state cost is $\kappa f(x)$ and the ordering cost is $\kappa c(y)$. For any admissible policy $u$,
$$
{J_\kappa(x,u)} = \mathbb{E}_{x}\left[\int_{0}^{\infty}e^{-rt}\kappa f(X^{u}(t))dt+\sum_{j}e^{-r\tau_{j}}\kappa c(y^{(j)})\right] = \kappa {J(x,u)}.
$$
Hence
$$
{V_\kappa(x) = \inf_{u\in\mathcal{U}} J_\kappa(x,u) = \inf_{u\in\mathcal{U}} \kappa J(x,u) = \kappa \inf_{u\in\mathcal{U}} J(x,u) = \kappa V(x)}.
$$
Thus, scaling all costs by $\kappa$ multiplies the value function by $\kappa$, and allows us to use cost rescaling during neural network training for the sake of numerical stability, without loss of generality. 
{For our test problems, we choose a suitable scaling factor $\kappa$; see below for details.}

\paragraph{Numerical optimizer.} 
{To compute the order-up-to vector $z^*$ used in our policy, we adopt one of two approaches. The first one solves the minimization problem in Eq.~(22) directly. The second approach uses the first-order condition in Eq.~(25). To solve the minimization problem in Eq.~(22), we use the implementation of L-BFGS \citep{nocedal2006numerical} in \texttt{LBFGS.jl}. The stationary point of Eq.~(25) is found with the same routine, applied to the least-squares objective $\tfrac12\|G_\vartheta(z)+c\|_2^2$.

For both methods, we restrict the search for $z^*$ to a compact box of the form $[\underline{z},\overline{z}] := \{z\in\mathbb{R}_+^d:\underline{z}\le z\le\overline{z}\}$. The box constraints $[\underline{z},\overline{z}]$ are chosen to address two competing considerations: On the one hand, the search should be confined to regions of the state space that were sufficiently explored during training, where the neural network approximations are expected to be reliable. On the other hand, the box should be large enough to contain a near-optimal order-up-to vector. Accordingly, $\underline{z}$ and $\overline{z}$ are each chosen to be a constant multiple of {$\psi S$}, where {$S$ is the benchmark order-up-to vector and $\psi\ge1$ is the aforementioned tuning parameter}. The initial point for the optimization routine is also chosen to be a constant multiple of {$\psi S$}; details can be found in Appendix~B.
As explained in Section 5, 
finding the order-up-to vector $z^*$ is computationally cheap, as it only needs to be performed once at the start of a performance simulation. 

}

{\bf Joint penalty-rate and learning-rate schedules.}
We provide further details on the joint penalty rate and learning rate schedules, 
which are described at a higher level in Section~\ref{sec:algorithm}. 
As explained therein, neural network training requires jointly increasing the penalty rate $\beta$ 
and decreasing the learning rate $\eta$ over the course of the loss minimization.
Both the penalty rate and learning rate are updated according to a piecewise constant schedule, 
where they are each held fixed for a prescribed number of iterations 
and then updated to their next values, possibly asynchronously. 

Figure~\ref{fig:joint_schedule} illustrates this procedure on a two-dimensional grid, 
where each point corresponds to a $(\beta, \eta)$ configuration, 
and each segment corresponds to an update in either $\beta$ or $\eta$ (or both)\footnote{For clarity within this appendix, $\beta_1, \ldots, \beta_M$ and $\eta_1, \ldots, \eta_K$ denote the \emph{distinct} values taken by the piecewise-constant penalty rate and learning rate schedules, so $M$ and $K$ here count distinct levels rather than the total number of iterations and the batch size as in Algorithm~\ref{algo}.}.
The training begins at the initial configuration $(\beta_1, \eta_1)$ 
(low penalty, high learning rate) and must reach the final configuration 
$(\beta_M, \eta_K)$ (high penalty, low learning rate) 
via a sequence of moves that increase $\beta$ and/or decrease $\eta$. 
Each move is one of three types, serving different purposes: 
A \emph{horizontal} move increases $\beta$ only to more aggressively penalize constraint violations while keeping the learning rate unchanged; 
a \emph{vertical} move decreases $\eta$ only to stabilize 
training under a fixed penalty; 
and a \emph{diagonal} move updates both simultaneously, 
more aggressively penalizing constraint violations while stabilizing training.
A joint schedule is thus specified by both the sequence of moves taken through 
the grid and the iteration indices at which transitions occur.

Two extreme paths trace the boundary of the grid:
\begin{itemize}
\item[(a)] Increase $\beta$ to its final value while holding $\eta$ fixed (all horizontal moves), 
then decrease $\eta$ (all vertical moves). This may result in unstable training 
because high penalty rates combined with a high learning rate may produce excessively 
large gradient updates, causing the loss to oscillate or diverge. 
\item[(b)] Decrease $\eta$ to its final value while holding $\beta$ fixed (all vertical moves), 
then increase $\beta$ (all horizontal moves). This may result in inefficient training, 
because, with a low penalty rate, violations of the almost-sure inequality are 
insufficiently penalized during early training, and by the time $\beta$ is increased, 
the learning rate is too small to correct the accumulated errors.
\end{itemize}
A more robust approach is to interleave increases in $\beta$ with decreases in $\eta$ 
via a combination of horizontal, vertical, and diagonal moves. 
Because we aim to quickly bring the penalty rate to its final value while maintaining training stability, 
we typically start with horizontal and diagonal moves, reserving vertical moves toward the end of 
training when the penalty rate is close to or has reached its final value.

For each group of problems (distinguished by a combination of problem dimension and demand variability), 
the specific path and transition iterations are determined empirically in two phases. 
In the first phase, we focus on a single problem instance 
and move to the next $(\beta, \eta)$ configuration each time the loss stabilizes, 
using horizontal and diagonal moves to raise the penalty rate toward its final value. 
Once the penalty rate has reached its final value, we continue training and 
decrease the learning rate (via vertical moves) as needed to stabilize the loss. 
If the loss fails to stabilize at any stage, the schedule is adjusted and the process is repeated.
We then apply the resulting stable schedule to other problem instances in the same group; 
if training remains stable and produces competitive policies, the schedule is retained.
The resulting schedules are reported in Table~\ref{tab:hyperparams}.

\begin{figure}[H]
\centering
\begin{tikzpicture}[>=stealth, scale=1.0]
  \foreach \x in {1,2,3,4,5,6} {
    \foreach \y in {1,2,3,4,5} {
      \fill[gray!40] (\x,\y) circle (1.5pt);
    }
  }

  \draw[->, thick] (0.3,0.3) -- (7.2,0.3) 
    node[right, font=\small] {$\beta$ (increasing)};
  \draw[->, thick] (0.3,0.3) -- (0.3,5.8) 
    node[above, font=\small] {$\eta$ (increasing)};

  \foreach \x/\lab in {1/$\beta_1$, 2/$\beta_2$, 3/$\beta_3$, 
                        4/$\beta_4$, 5/$\beta_5$, 6/$\beta_M$} {
    \node[below, font=\footnotesize] at (\x,0.3) {\lab};
  }
  \foreach \y/\lab in {1/$\eta_K$, 2/$\eta_4$, 3/$\eta_3$, 
                        4/$\eta_2$, 5/$\eta_1$} {
    \node[left, font=\footnotesize] at (0.3,\y) {\lab};
  }

  \fill[black] (1,5) circle (3pt);
  \node[above right, font=\footnotesize] at (.3,5.1) {Start $(\beta_1, \eta_1)$};
  \fill[black] (6,1) circle (3pt);
  \node[right, font=\footnotesize] at (6.2,1) {End $(\beta_M, \eta_K)$};

  \draw[very thick, red!70, dashed] 
    (1,5) -- (6,5);
  \draw[->, very thick, red!70, dashed] 
    (6,5) -- (6,1.15);
  \node[red!70, above, font=\footnotesize] at (4.5,5.2) {(a) unstable schedule};

  \draw[very thick, orange!70, dashed] 
    (1,5) -- (1,1);
  \draw[->, very thick, orange!70, dashed] 
    (1,1) -- (5.85,1);
  \node[orange!70, above, font=\footnotesize] at (3.5,1.15) {(b) unstable schedule};

  \draw[->, very thick, blue!80] 
    (1,5) -- (2,5) -- (2,4) -- (3,4) -- (4,3) -- (5,3) -- (5,2) -- (6,2) -- (6,1);
  \node[blue!80, above right, font=\footnotesize] at (2.8,3.3) {Stable schedule};
\end{tikzpicture}
\caption{Schematic of the joint penalty-rate and learning-rate schedule. 
Each grid point represents a $(\beta, \eta)$ configuration. 
Training progresses from $(\beta_1, \eta_1)$ to $(\beta_M, \eta_K)$. 
Horizontal segments correspond to penalty-only updates, vertical segments 
to learning-rate-only updates, and diagonal segments to simultaneous updates. 
Extreme paths (a) and~(b) traverse the grid boundary and may lead to unstable 
or inefficient training; a balanced path through the interior 
yields a more stable schedule.}
\label{fig:joint_schedule}
\end{figure}

\subsection{Hyperparameter Configurations}\label{app:B_configs}
{This section presents the hyperparameter values used during neural network training. Specifically, Table~\ref{tab:hyperparams} reports the values common across all test problems, while Table~\ref{tab:hyperparams-2d} reports the remaining hyperparameter values for the 2-dimensional test problems. Similarly, Tables \ref{tab:hyperparams-12d-low-cv}, \ref{tab:hyperparams-12d-med-cv}, and \ref{tab:hyperparams-12d-high-cv} report the remaining hyperparameter values for the low-, medium-, and high-variability cases, respectively, of the 12-dimensional test problems. Finally, Table~\ref{tab:hyperparams-50d} reports the remaining hyperparameter values for the 50-dimensional test problems.
The last columns of Tables~\ref{tab:hyperparams-2d}--\ref{tab:hyperparams-50d} report the performance simulation times. All simulations were run on a single thread. Multithreading could substantially reduce these times. However, because the number of available CPUs varied over time in our shared computing environment, we avoided multithreading to ensure consistent reporting.
}

\begin{table}[H]
    \centering
    \caption{Common hyperparameter configurations for neural network training outlined per group of test problems (the final penalty rate, $\beta_{\text{final}}$, is tuned for each test problem separately; {see Tables \ref{tab:hyperparams-12d-low-cv} - \ref{tab:hyperparams-50d} for details}).}
    \label{tab:hyperparams}
    \resizebox{\textwidth}{!}{
    \begin{tabular}{@{}lcccc@{}}
        \toprule
        {Hyperparameter} & 2-Dimensional & {12-D (Low \& Medium CV)} & {12-D (High CV)} & {50-Dimensional} \\
        \midrule
        Number of hidden layers & 5 & 3 & 3 & 4 \\
        Neurons per layer & 100 & 1,000 & 1,000 & 1,000 \\
        Activation function & \texttt{elu} & \texttt{elu} & \texttt{elu} & \texttt{elu} \\
        \midrule
        Batch size ($K$)      & 2,500 & 5,000 & 5,000 & 5,000 \\
        Time intervals ($N$)  & 100 & 100 & 100 & 100 \\
        Number of iterations     & 25,000 & 40,000 & 40,000 & 40,000 \\
        Scaling $\kappa$ & 0.1 & 0.1 & 0.1 & 0.01 \\
        Sampling distribution $\Phi$ & Lognormal & Lognormal & Lognormal & Lognormal \\
        \midrule
        Learning rate (iteration range) & 
        \makecell[l]{$10^{-3}$ (1--10,000) \\ $10^{-4}$ (10,000--15,000) \\ $10^{-5}$ (15,000--20,000) \\ $10^{-6}$ (20,000--25,000) \\ $ $} &
        \makecell[l]{$10^{-3}$ (1--5,000) \\ $5 \cdot 10^{-4}$ (5,000--10,000) \\ $10^{-4}$ (10,000--15,000) \\ $10^{-5}$ (15,000--20,000) \\ $10^{-6}$ (20,000--40,000)} &
        \makecell[l]{$10^{-3}$ (1--5,000) \\ $2 \cdot 10^{-4}$ (5,000--10,000) \\ $10^{-4}$ (10,000--20,000) \\ $10^{-5}$ (20,000--30,000) \\ $10^{-6}$ (30,000--40,000)} &
        \makecell[l]{$10^{-3}$ (1--5,000) \\ $2 \cdot 10^{-4}$ (5,000--10,000) \\ $10^{-4}$ (10,000--20,000) \\ $10^{-5}$ (20,000--30,000) \\ $10^{-6}$ (30,000--40,000)} \\
        \midrule
        Penalty rate (iteration range) &
        \makecell[l]{$10^0$ (1--2,500) \\ $10^1$ (2,500--5,000) \\ $10^2$ (5,000--7,500) \\ $10^3$ (7,500--10,000) \\ $10^4$ (10,000--15,000) \\ $10^5$ (15,000--20,000) \\ $\beta_{\text{final}}$ (20,000--40,000)} &
        \makecell[l]{$10^0$ (1--2,500) \\ $10^1$ (2,500--5,000) \\ $10^2$ (5,000--7,500) \\ $10^3$ (7,500--10,000) \\ $10^4$ (10,000--15,000) \\ $10^5$ (15,000--20,000) \\ $\beta_{\text{final}}$ (20,000--40,000)} &
        \makecell[l]{$10^0$ (1--2,500) \\ $10^1$ (2,500--5,000) \\ $10^2$ (5,000--7,500) \\ $10^3$ (7,500--10,000) \\ $10^4$ (10,000--20,000) \\ $\beta_{\text{final}}$ (20,000--40,000) \\ $ $} &
        \makecell[l]{$10^0$ (1--2,500) \\ $10^1$ (2,500--5,000) \\ $10^2$ (5,000--7,500) \\ $10^3$ (7,500--10,000) \\ $10^4$ (10,000--20,000) \\ $\beta_{\text{final}}$ (20,000--40,000) \\ $ $} \\
        \bottomrule
    \end{tabular}
    }
\end{table}

\begin{table}[H]
    \centering
    \caption{Problem-specific hyperparameters for 2-dimensional test problems ({$\mathbb{E}[Z]$ is set to $\psi$ times the $S$ vector of one of the benchmark policies (with $\psi = 1$ unless noted), which also serves as the optimization starting point;} bounds are scaled by $\mathbb{E}[Z]$ componentwise; training and simulation times reported in minutes).}
    \label{tab:hyperparams-2d}
    \resizebox{0.95\textwidth}{!}{%
    \begin{tabular}{@{}lccccccccccc@{}}
        \toprule
        Test Instance & $T$ & $\lambda$ & $\nu$ & $\alpha$ & $\beta_{\text{final}}$ & $\mathbb{E}[Z]$ & $\epsilon$ & Opt. Method & Opt. Bounds & Train & Sim \\
        \midrule
        Base Case & 0.20 & 1.21 & 0.40 & 0.00 & $10^{7}$ & Can-order & $-2.50$ & Eq.~\eqref{eq:argminimpulse} & [0.00, 1.50] & 10.81 & 6.81 \\
        Low CV    & 0.10 & 1.14 & 0.05 & 0.00 & $10^{6}$ & $(Q,S)$ & $-1.25$ & Eq.~\eqref{eq:argminimpulse} & [0.00, 1.50] & 10.25 & 6.82 \\
        High CV   & 0.05 & 1.18 & 1.00 & 0.00 & $10^{6}$ & MDP & $-1.00$ & Eq.~\eqref{eq:stationary_conditionZ} & [0.90, 1.10] & 10.86 & 4.51 \\
        $c_0=20$  & 0.05 & 1.79 & 0.20 & 0.00 & $10^{7}$ & Can-order & $-2.50$ & Eq.~\eqref{eq:argminimpulse} & [0.00, 1.50] & 10.20 & 6.79 \\
        $c_0=100$ & 0.10 & 0.83 & 0.40 & 0.00 & $10^{5}$ & Can-order & $-2.50$ & Eq.~\eqref{eq:argminimpulse} & [0.00, 1.50] & 11.07 & 6.92 \\
        $p=10$    & 0.20 & 1.10 & 0.10 & 0.00 & $10^{7}$ & Can-order & $-2.50$ & Eq.~\eqref{eq:argminimpulse} & [0.00, 1.50] & 10.18 & 6.70 \\
        $p=100$   & 0.05 & 1.13 & 0.40 & 0.00 & $10^{7}$ & Can-order & $-1.25$ & Eq.~\eqref{eq:argminimpulse} & [0.00, 1.25] & 10.15 & 6.68 \\
        \bottomrule
    \end{tabular}
    }
\end{table}

\begin{table}[H]
    \centering
    \caption{Problem-specific hyperparameters for 12-dimensional test problems with Poisson demand ({$\mathbb{E}[Z]$ is set to $\psi$ times the $S$ vector of one of the benchmark policies (with $\psi = 1$ unless noted), which also serves as the optimization starting point;} bounds are scaled by $\mathbb{E}[Z]$ componentwise; training and simulation times reported in minutes).}
    \label{tab:hyperparams-12d-low-cv}
    \resizebox{0.95\textwidth}{!}{%
    \begin{tabular}{@{}ccccccccccccc@{}}
        \toprule
        $c_0$ & $p$ & $T$ & $\lambda$ & $\nu$ & $\alpha$ & $\beta_{\text{final}}$ & $\mathbb{E}[Z]$ & $\epsilon$ & Opt. Method & Opt. Bounds & Train & Sim \\
        \midrule
        20  & 10  & 0.10 & 3.93 & 0.80 & 0.40 & $10^{7}$ & $(Q,S)$ & $-7.50$  & Eq.~\eqref{eq:stationary_conditionZ} & [0.00, 2.00] & 148.24 & 385.56 \\
            & 50  & 0.10 & 4.75 & 0.80 & 0.00 & $10^{6}$ & $(Q,S)$ & $-7.50$  & Eq.~\eqref{eq:stationary_conditionZ} & [0.00, 2.00] & 148.21 & 386.72 \\
            & 100 & 0.10 & 4.75 & 0.80 & 0.20 & $10^{6}$ & $(Q,S)$ & $-7.50$  & Eq.~\eqref{eq:stationary_conditionZ} & [0.00, 2.00] & 148.40 & 386.21 \\
        100 & 10  & 0.10 & 1.73 & 0.40 & 0.00 & $10^{6}$ & $(Q,S)$ & $-20.00$ & Eq.~\eqref{eq:stationary_conditionZ} & [0.00, 2.00] & 148.56 & 392.44 \\
            & 50  & 0.10 & 2.05 & 0.20 & 0.20 & $10^{6}$ & $(Q,S)$ & $-20.00$ & Eq.~\eqref{eq:stationary_conditionZ} & [0.00, 2.00] & 148.60 & 388.06 \\
            & 100 & 0.10 & 2.08 & 0.40 & 0.00 & $10^{6}$ & $(Q,S)$ & $-20.00$ & Eq.~\eqref{eq:stationary_conditionZ} & [0.00, 2.00] & 148.41 & 391.87 \\
        200 & 10  & 0.10 & 1.21 & 0.80 & 0.20 & $10^{6}$ & $(Q,S)$ & $-25.00$ & Eq.~\eqref{eq:stationary_conditionZ} & [0.00, 2.00] & 148.50 & 387.72 \\
            & 50  & 0.10 & 1.35 & 1.00 & 0.00 & $10^{6}$ & $(Q,S)$ & $-25.00$ & Eq.~\eqref{eq:stationary_conditionZ} & [0.00, 2.00] & 146.98 & 373.96 \\
            & 100 & 0.10 & 1.40 & 0.10 & 0.00 & $10^{6}$ & $(Q,S)$ & $-40.00$ & Eq.~\eqref{eq:stationary_conditionZ} & [0.00, 2.00] & 148.07 & 369.68 \\
        \bottomrule
    \end{tabular}
    }
\end{table}

\begin{table}[H]
    \centering
    \caption{Problem-specific hyperparameters for 12-dimensional test problems with negative binomial demand ($\operatorname{CV}=0.5$) ({$\mathbb{E}[Z]$ is set to $\psi$ times the $S$ vector of one of the benchmark policies (with $\psi = 1$ unless noted), which also serves as the optimization starting point;} bounds are scaled by $\mathbb{E}[Z]$ componentwise; training and simulation times reported in minutes).}
    \label{tab:hyperparams-12d-med-cv}
    \resizebox{0.95\textwidth}{!}{%
    \begin{tabular}{@{}ccccccccccccc@{}}
        \toprule
        $c_0$ & $p$ & $T$ & $\lambda$ & $\nu$ & $\alpha$ & $\beta_{\text{final}}$ & $\mathbb{E}[Z]$ & $\epsilon$ & Opt. Method & Opt. Bounds & Train & Sim \\
        \midrule
        20  & 10  & 0.10 & 5.74 & 0.10 & 0.00 & $10^{6}$ & $(Q,S)$   & $-15.00$ & Eq.~\eqref{eq:stationary_conditionZ} & [0.00, 2.00] & 148.43 & 370.94 \\
            & 50  & 0.10 & 6.87 & 0.20 & 0.40 & $10^{6}$ & Can-order & $-12.50$ & Eq.~\eqref{eq:stationary_conditionZ} & [0.00, 2.00] & 148.49 & 363.19 \\
            & 100 & 0.10 & 6.94 & 0.10 & 0.20 & $10^{6}$ & Can-order & $-12.50$ & Eq.~\eqref{eq:stationary_conditionZ} & [0.00, 2.00] & 148.15 & 366.50 \\
        100 & 10  & 0.10 & 2.04 & 0.80 & 0.00 & $10^{6}$ & $(Q,S)$   & $-25.00$ & Eq.~\eqref{eq:stationary_conditionZ} & [0.00, 2.00] & 148.12 & 362.82 \\
            & 50  & 0.10 & 2.88 & 0.40 & 0.40 & $10^{6}$ & Can-order & $-30.00$ & Eq.~\eqref{eq:stationary_conditionZ} & [0.00, 2.00] & 148.34 & 388.84 \\
            & 100 & 0.10 & 2.65 & 0.80 & 0.40 & $10^{6}$ & Can-order & $-30.00$ & Eq.~\eqref{eq:stationary_conditionZ} & [0.00, 2.00] & 148.45 & 389.29 \\
        200 & 10  & 0.10 & 1.37 & 0.40 & 0.20 & $10^{6}$ & $(Q,S)$   & $-40.00$ & Eq.~\eqref{eq:stationary_conditionZ} & [0.00, 2.00] & 148.28 & 384.80 \\
            & 50  & 0.10 & 1.66 & 1.00 & 0.40 & $10^{6}$ & $(Q,S)$   & $-30.00$ & Eq.~\eqref{eq:stationary_conditionZ} & [0.00, 2.00] & 147.96 & 389.94 \\
            & 100 & 0.10 & 1.95 & 0.80 & 0.40 & $10^{6}$ & Can-order & $-30.00$ & Eq.~\eqref{eq:stationary_conditionZ} & [0.00, 2.00] & 147.67 & 384.54 \\
        \bottomrule
    \end{tabular}
    }
\end{table}

\begin{table}[H]
    \centering
    \caption{Problem-specific hyperparameters for 12-dimensional test problems with negative binomial demand ($\operatorname{CV}=1.0$) ({$\mathbb{E}[Z]$ is set to $\psi$ times the $S$ vector of one of the benchmark policies (with $\psi = 1$ unless noted), which also serves as the optimization starting point;} bounds are scaled by $\mathbb{E}[Z]$ componentwise; training and simulation times reported in minutes).}
    \label{tab:hyperparams-12d-high-cv}
    \resizebox{0.95\textwidth}{!}{%
    \begin{tabular}{@{}ccccccccccccc@{}}
        \toprule
        $c_0$ & $p$ & $T$ & $\lambda$ & $\nu$ & $\alpha$ & $\beta_{\text{final}}$ & $\mathbb{E}[Z]$ & $\epsilon$ & Opt. Method & Opt. Bounds & Train & Sim \\
        \midrule
        20  & 10  & 0.05 & 7.54 & 0.80 & 0.00 & $10^{6}$ & {$(Q,S)$, with $\psi=2$} & $-7.50$ & Eq.~\eqref{eq:argminimpulse} & [0.75, 1.25] & 148.61 & 382.04 \\
            & 50  & 0.05 & 8.99 & 0.80 & 0.20 & $10^{6}$ & Can-order                & $-20.00$  & Eq.~\eqref{eq:stationary_conditionZ} & [0.50, 1.50] & 148.44 & 384.54 \\
            & 100 & 0.10 & 6.96 & 0.10 & 0.40 & $10^{6}$ & Can-order                & $-25.00$ & Eq.~\eqref{eq:stationary_conditionZ} & [0.50, 1.50] & 147.72 & 387.45 \\
        100 & 10  & 0.05 & 2.97 & 0.80 & 0.40 & $10^{7}$ & $(Q,S)$                  & $-100.00$ & Eq.~\eqref{eq:argminimpulse}         & [0.75, 1.25] & 148.56 & 361.59 \\
            & 50  & 0.10 & 3.54 & 1.00 & 0.00 & $10^{5}$ & Can-order                & $-2.50$   & Eq.~\eqref{eq:stationary_conditionZ} & [0.50, 1.50] & 148.64 & 362.54 \\
            & 100 & 0.20 & 3.60 & 1.00 & 0.00 & $10^{6}$ & Can-order                & $-2.50$   & Eq.~\eqref{eq:stationary_conditionZ} & [0.50, 1.50] & 149.11 & 363.30 \\
        200 & 10  & 0.05 & 1.99 & 0.80 & 0.40 & $10^{6}$ & $(Q,S)$                  & $-100.00$ & Eq.~\eqref{eq:argminimpulse}         & [0.75, 1.25] & 148.03 & 91.80  \\
            & 50  & 0.10 & 2.79 & 0.10 & 0.20 & $10^{6}$ & Can-order                & $-75.00$  & Eq.~\eqref{eq:stationary_conditionZ} & [0.50, 1.50] & 147.95 & 89.21  \\
            & 100 & 0.20 & 2.54 & 0.80 & 0.00 & $10^{6}$ & Can-order                & $-30.00$  & Eq.~\eqref{eq:stationary_conditionZ} & [0.50, 1.50] & 147.16 & 89.52  \\
        \bottomrule
    \end{tabular}
    }
\end{table}

\begin{table}[H]
    \centering
    \caption{Problem-specific hyperparameters for 50-dimensional test problems ({$\mathbb{E}[Z]$ is set to $\psi$ times the $S$ vector of one of the benchmark policies (with $\psi = 1$ unless noted), which also serves as the optimization starting point;} bounds are scaled by $\mathbb{E}[Z]$ componentwise; training and simulation times reported in minutes).}
    \label{tab:hyperparams-50d}
    \resizebox{0.95\textwidth}{!}{%
    \begin{tabular}{@{}llccccccccccc@{}}
        \toprule
        CV & $c_0$ & $T$ & $\lambda$ & $\nu$ & $\alpha$ & $\beta_{\text{final}}$ & $\mathbb{E}[Z]$ & $\epsilon$ & Opt. Method & Opt. Bounds & Train & Sim \\
        \midrule
        \multirow{3}{*}{Poisson} &  50 & 0.20 & 5.87  & 0.40 & 0.00 & $10^{6}$ & $(Q,S)$ & $-1.25$  & Eq.~\eqref{eq:argminimpulse}         & [0.50, 1.50] & 225.95 & 488.04 \\
                                 & 150 & 0.05 & 3.21  & 0.40 & 0.00 & $10^{5}$ & $(Q,S)$ & $-1.25$  & Eq.~\eqref{eq:argminimpulse}         & [0.50, 1.50] & 226.66 & 474.86 \\
                                 & 250 & 0.20 & 2.56  & 0.40 & 0.00 & $10^{6}$ & $(Q,S)$ & $-5.00$  & Eq.~\eqref{eq:argminimpulse}         & [0.50, 1.50] & 227.04 & 583.81 \\
        \midrule
        \multirow{3}{*}{0.5}     &  50 & 0.20 & 8.91  & 0.40 & 0.00 & $10^{5}$ & $(Q,S)$ & $-2.50$  & Eq.~\eqref{eq:argminimpulse}         & [0.50, 1.50] & 226.51 & 629.24 \\
                                 & 150 & 0.10 & 4.36  & 0.10 & 0.20 & $10^{6}$ & $(Q,S)$ & $-10.00$ & Eq.~\eqref{eq:stationary_conditionZ} & [0.50, 1.50] & 227.30 & 635.07 \\
                                 & 250 & 0.20 & 3.35  & 1.00 & 0.20 & $10^{5}$ & $(Q,S)$ & $-2.50$  & Eq.~\eqref{eq:stationary_conditionZ} & [0.50, 1.50] & 226.91 & 620.31 \\
        \midrule
        \multirow{3}{*}{1.0}     &  50 & 0.10 & 16.52 & 0.20 & 0.00 & $10^{6}$ & $(Q,S)$ & $-10.00$ & Eq.~\eqref{eq:stationary_conditionZ} & [0.50, 1.50] & 226.38 & 630.49 \\
                                 & 150 & 0.05 & 8.45  & 0.20 & 0.00 & $10^{6}$ & $(Q,S)$ & $-15.00$ & Eq.~\eqref{eq:stationary_conditionZ} & [0.50, 1.50] & 227.33 & 634.61 \\
                                 & 250 & 0.05 & 5.88  & 0.20 & 0.00 & $10^{5}$ & $(Q,S)$ & $-15.00$ & Eq.~\eqref{eq:stationary_conditionZ} & [0.50, 1.50] & 227.19 & 627.81 \\
        \bottomrule
    \end{tabular}
    }
\end{table}


\section{Implementation of Benchmarks}\label{app:C}
\subsection{MDP Benchmarks}\label{app:C1}
To obtain the optimal solution for the two-dimensional test problems, we minimize \eqref{eq:MDPobjective1} to obtain the value function ${\tilde V(x)\coloneqq \inf_u \tilde J(x,u)}$, which is known to be characterized by the Bellman equation
\begin{equation}\label{eq:bellman}
    {\tilde V(x) = \min_{y\in\mathbb{N}_0^d} \left\{\sum_{x'\in\Z^d} p(x, x', y) \left(f(x')+c(y) + \gamma \tilde V(x')\right)\right\}},
\end{equation}
where {$\mathbb{N}_0^d$ and $\Z^d$ denote the sets of $d$-dimensional natural numbers and integers, respectively.} Also, $f(x')$ is the per-period inventory cost when the end-of-period inventory is $x'$, and $p(x,x',y)$ is the probability of moving from inventory state $x$ to $x'$ given order vector $y$, determined by the underlying demand distribution.

It is computationally feasible to solve the two-dimensional problems using standard MDP techniques, yielding the benchmarks to which we compare our proposed policy derived from the continuous-time impulse control formulation. To be specific, we solve \eqref{eq:bellman} using policy iteration (see, e.g., \citealp[Section~2.3]{bertsekas2012dynamic}).

To achieve such computational feasibility, it is necessary to truncate the state and action spaces. The following propositions formally justify that we can perform these truncations.

\begin{proposition}\label{prop:mdpequiv}
The minimization in \eqref{eq:bellman} over order quantities $y \in \mathbb{N}_0^d$ is equivalent to a minimization over order-up-to levels $z \in \mathbb{Z}^d$ with $z \ge x$ componentwise.
\end{proposition}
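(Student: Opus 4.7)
The plan is to establish a bijection between admissible order quantities and admissible order-up-to levels, and then verify that the objective in the Bellman equation \eqref{eq:bellman} is invariant under this change of variable. Specifically, given the current inventory state $x \in \Z^d$, I would consider the map $\Psi_x : \mathbb{N}_0^d \to \{z \in \Z^d : z \geq x \text{ componentwise}\}$ defined by $\Psi_x(y) = x + y$. Clearly $\Psi_x$ is a bijection with inverse $\Psi_x^{-1}(z) = z - x$, and its image is exactly the feasible set of order-up-to levels stated in the proposition.

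Next, I would show that the per-decision objective inside the braces of \eqref{eq:bellman} transforms consistently. After observing demand $\xi$, the end-of-period inventory is $x' = x + y - \xi = z - \xi$, so the transition probability satisfies $p(x, x', y) = \Pr(\xi = x + y - x') = \Pr(\xi = z - x')$, which is a function of $(z, x')$ alone. For the ordering cost, I would invoke the definition of $c(\cdot)$ from Section~\ref{sec:mathmodel}: if $y = 0$ (equivalently $z = x$), then $c(y) = 0$; otherwise $c(y) = c_0 + \sum_{i=1}^d c_i y_i = c_0 + \sum_{i=1}^d c_i (z_i - x_i)$. In either case, $c(y)$ is expressible as a well-defined function $\tilde c(x, z)$ of the pair $(x, z)$. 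The state cost $f(x')$ and the continuation value $\gamma V(x')$ depend only on $x'$, so the entire summand is a function of $(x, z, x')$ only.

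Combining these observations, I would rewrite the minimization as
\begin{equation*}
\min_{y \in \mathbb{N}_0^d} \sum_{x' \in \Z^d} p(x, x', y)\bigl(f(x') + c(y) + \gamma V(x')\bigr)
= \min_{\substack{z \in \Z^d \\ z \geq x}} \sum_{x' \in \Z^d} \Pr(\xi = z - x')\bigl(f(x') + \tilde c(x, z) + \gamma V(x')\bigr),
\end{equation*}
where the equality is term-by-term along corresponding pairs $y \leftrightarrow z = x + y$. Since $\Psi_x$ is a bijection onto $\{z \geq x\}$, the two minima are taken over sets of equal cardinality with equal objective values, so they coincide. This yields the equivalent formulation over order-up-to levels.

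I do not expect a substantive obstacle: the result is essentially a change-of-variable statement, and the only point requiring care is the case distinction in the ordering cost $c(\cdot)$ at $y = 0$, which corresponds cleanly to $z = x$. The significance of the proposition lies not in its depth but in its utility, as it permits the subsequent truncation of the action space (needed for computational tractability of policy iteration) to be carried out directly on order-up-to levels, which are naturally bounded in a neighborhood of the optimal $S$ vector rather than unbounded like raw order quantities.
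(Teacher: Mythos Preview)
Your proposal is correct and follows essentially the same approach as the paper: both arguments reduce to the change of variable $z = x + y$, observe that this gives a bijection between $\mathbb{N}_0^d$ and $\{z \in \Z^d : z \ge x\}$, and conclude that the two minimizations are equivalent. Your version is slightly more explicit in checking that each term of the objective (transition probability, ordering cost, state cost, continuation value) transforms consistently, but the substance is identical.
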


\begin{proof}
Let us denote the state after ordering by $z = x+y$. Since $y \ge 0$, it must hold that $z \ge x$. We can then perform a change of variables in the Bellman equation from $y$ to $z$. The action space becomes choosing an order-up-to level $z \in \{z' \in \mathbb{Z}^d : z' \ge x \}$. The Bellman equation is then rewritten as
\begin{equation}\label{eq:bellman_z}
    {\tilde V(x) = \min_{z \ge x} \left\{ c(z-x) + \mathbb{E}\left[ f(z-\xi) + \gamma  \tilde V(z-\xi) \right] \right\}}.
\end{equation}
It is easily verified that there exists a one-to-one mapping between any feasible order quantity $y$ and an order-up-to level $z$ for each inventory state $x$. Therefore, the minimization problems in \eqref{eq:bellman} and \eqref{eq:bellman_z} are equivalent. Thus, restricting the search to order-up-to policies can be done without loss of optimality.
\end{proof}

Proposition \ref{prop:mdpequiv} yields an alternative Bellman equation that we can solve using policy iteration. 
Recall that \cite{johnson1967optimality} characterizes the optimal policy as the $(\sigma,S)$ policy described earlier. 
This implies that if the initial state $x \le S$ componentwise, then one orders up to the vector $S$ thereafter, since demand is nonnegative. 
Under mild assumptions on the cost parameters (e.g., backlogging costs substantially exceed holding costs), the order-up-to vector is known to be positive. 
In our numerical experiments, we set the initial state to $x=0$, so a single optimal vector $S$ exists. 
The following proposition is essentially an immediate corollary of the results in \cite{johnson1967optimality}, and greatly simplifies the search for the optimal solution.

\begin{proposition}
Suppose that demand is nonnegative, the optimal order-up-to vector $S^*$ is nonnegative, and the initial inventory state $x$ satisfies $x_i\leq S_i$ for each item $i$. Then, there exists a bounded set $\mathcal{Z} \subset \mathbb{N}_0^d$ such that an optimal order-up-to policy orders up to $S^* \in \mathcal{Z}$. Moreover, the state space can be truncated without loss of optimality.
\end{proposition}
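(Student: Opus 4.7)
The plan is twofold: first, bound the optimal order-up-to vector $S^*$ so as to define the finite set $\mathcal{Z}$; second, bound the reachable state space under the optimal policy, justifying the state-space truncation.

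For bounding $S^*$ from above, I would argue that excessive order-up-to levels are strictly suboptimal due to holding costs. Fix any candidate $S$ and consider perturbing its $i$th component downward by $\delta$ while keeping the ordering epochs unchanged. Under this perturbation, the per-period holding cost contribution from item $i$ decreases by $h_i \delta$ whenever $X_i(n) \ge 0$, while the backlog cost can increase by at most $p_i \delta$ per period when a stock-out occurs. A cycle-based comparison, using that the discounted expected length of an ordering cycle is bounded above by $1/(1-\gamma)$ and that the expected demand per period is $\mu_i$, shows that for $S_i$ exceeding some explicit threshold $\bar{S}_i$ (expressible in terms of $h_i$, $p_i$, $\mu_i$, and $\gamma$), the net effect of the perturbation is a strict decrease in expected discounted cost. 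Hence $S_i^* \le \bar{S}_i$, and one may take $\mathcal{Z} = \prod_{i=1}^d \{0,1,\ldots,\bar{S}_i\}$.

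For the state-space truncation, the upper bound is immediate from Proposition \ref{prop:mdpequiv} and the hypothesis $x \le S^*$: starting from such an $x$, the $(\sigma, S^*)$ structure implies that after any ordering decision the post-order state is still bounded above by $S^*$ componentwise, and since $\xi(n) \ge 0$, the inventory only decreases between orders, yielding $X(n) \le S^*$ for all $n \ge 0$. For the lower bound, I would argue that the no-order region cannot extend arbitrarily far into the backlog orthant. If $x_i < -\underline{x}_i$ for a threshold $\underline{x}_i$ large enough, the immediate per-period cost $p_i|x_i|$ of inaction exceeds the cost of an immediate replenishment $c_0 + \sum_j c_j(S_j^* - x_j)$ plus the optimal continuation value $V(S^*)$, so ordering strictly dominates waiting. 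This produces componentwise lower bounds $-\underline{x}_i$ on states reachable under any optimal policy, and combining with the upper bound $S^*$ yields a bounded truncated state space $\mathcal{X}_{\mathrm{trunc}} = \prod_{i=1}^d \{-\underline{x}_i,\ldots,\bar{S}_i\}$.

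The main obstacle will be producing bounds $\bar{S}_i$ and $\underline{x}_i$ that are tight enough to be computationally useful: the worst-case arguments sketched above are finite but typically loose. In practice I would complement them with an iterative scheme that enlarges the truncation box until the computed $S^*$ and the policy-iteration iterates lie strictly in the interior, providing an a posteriori certificate that no optimality has been sacrificed by the truncation.
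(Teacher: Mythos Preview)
Your upper-bound arguments are essentially aligned with the paper's proof, though you work harder than necessary. The paper simply invokes \cite{johnson1967optimality} for the existence of a single order-up-to vector $S^*$, observes that the cost of ordering up to $z$ grows without bound as $\|z\|\to\infty$ so that $S^*$ must be finite, and then notes (as you do) that with nonnegative demand and $x\le S^*$, the post-order state is $S^*$ and between orders inventory only decreases, so $X(n)\le S^*$ componentwise for all $n$. That is the entire content of the paper's argument: the truncated state space is $\mathcal{X}=\{x\in\mathbb{Z}^d: x_i\le\bar{x}_i\}$, an \emph{upper} truncation only. Your perturbation argument for $\bar S_i$ is more constructive but not needed for the proposition as stated.

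Your lower-bound argument, by contrast, goes beyond what the paper claims and contains a gap. You correctly argue that the no-order region is bounded below: for $x_i$ sufficiently negative, ordering dominates waiting. But you then conclude that this yields ``componentwise lower bounds $-\underline{x}_i$ on states reachable under any optimal policy,'' which does not follow. Under the optimal policy the inventory is raised to $S^*$ at an ordering epoch and then depleted by demand; if demand has unbounded support (as with the negative binomial used in the paper's test problems), a single large realization can drive the state below any fixed $-\underline{x}_i$ before the next ordering decision is made. So the reachable set is not bounded below. What is bounded below is the set of states at which the optimal policy \emph{does not} order, and one could exploit this by representing $V$ on very negative states via $V(x)=c_0+\sum_j c_j(S_j^*-x_j)+V(S^*)$ rather than by tabulation. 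The paper sidesteps the issue entirely: its proposition asserts only the upper truncation, and the lower bound of $-200$ used in the experiments is a practical choice, not a consequence of the proof.
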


\begin{proof}
Under the stated assumptions, it follows from the results in \cite{johnson1967optimality} that under the optimal policy, inventory is raised to a single order-up-to vector $ S^*$ (independent of $x$). It is easily verified that the unbounded increase in costs for large $z$ ensures that the optimal order-up-to level, $S^*$, which minimizes the total cost, must be finite. Note further from our assumptions, that we can lower bound $z$ by zero. Therefore, we can restrict the search for $S^*$ to a sufficiently large compact set $\mathcal{Z} = \{z \in \mathbb{Z}^d: \underline{z}_i \le z_i \le \bar{z}_i, \ i=1,\dots,d\}$ without loss of optimality.

Restricting the action space allows for the truncation of the state space. Under any order-up-to policy with $z\in\mathcal{Z}$, the inventory level immediately after replenishment is $z \in \mathcal{Z}$, so that $x_i \le \bar{z}_i$. Since demand is nonnegative, the inventory level for item $i$ never exceeds $\bar{z}_i$. This means that states where $x_i > \bar{z}_i$ are not reachable under such a policy. It is therefore sufficient to solve the MDP on a truncated state space $\mathcal{X}:=\{x\in\mathbb{Z}^d: x_i \leq \bar{x}_i,\ i=1,\ldots,d\}$, provided the upper bound $\bar{x}$ is chosen to be at least as large as the upper bound $\bar{z}$ of the optimal action space.
\end{proof}

In our two-dimensional test problems, we truncate the action space by considering only order-up-to levels $z \in [0,100]^2$. These bounds were chosen via preliminary computations with the other benchmark policies so that we could identify a region where the optimal policy is likely to operate. The state space is similarly restricted by setting the lower and upper bounds to -250 and 100, respectively.

\subsection{Benchmark Policies}\label{app:C2}

{In this section, we describe in detail the benchmark policies 
that we use for performance comparisons. We evaluate the performances of the benchmark policies under the expected total discounted cost criterion, with the annual interest rate being set 
to $r = 0.05$. In all our experiments, the initial inventory levels are set to $0$ for all items.}

All benchmark policies described here are 
each defined by a set of parameters.
To determine these parameters, 
we are often required to consider inventory problems that are related but different from the original SJRP. For example, 
to set the parameters $s$ and $S$ for independent $(s,S)$ policies, we consider several single-item inventory problems. 
For these related inventory problems, we use the same annual interest rate of $r = 0.05$, and the expected total discounted cost objective. 
We first discuss the independent $(s,S)$ policy, and then review {the $(R,S)$, $(Q,S)$, the {hybrid} $(R,Q,S)$ and can-order policies}. 

\textbf{Independent $(s, S)$ policies.} An independent $(s, S)$ policy computes a reorder point $s_i$ and an order-up-to level $S_i$ for item $i$, $i = 1, \dots, d$. Whenever the inventory level of item $i$ drops to or below $s_i$, the policy places an order to bring the $i$th inventory level to $S_i$. In other words, the policy performs independent replenishment for each item, without considering the opportunities for cost savings through coordinated replenishment. 

We consider a parametrized family of independent $(s,S)$ policies, parametrized by $\alpha \in (0, 1]$. 
For a given $\alpha \in (0, 1]$, the corresponding independent 
$(s,S)$ policy determines the reorder points $s_i$ and 
the order-up-to level $S_i$ as follows. For $i=1,2, \ldots, d$, $s_i$ and $S_i$ correspond to the optimal reorder point 
and order-up-to level, respectively, of a one-item inventory 
model involving only item $i$, with demand process $D_i$ 
defined in Eq.~\eqref{eq:browniandemand}, unit holding cost $h_i$, 
unit backlogging cost $p_i$, unit variable cost $c_i$, 
and fixed ordering cost $\alpha c_0$. {In other words,
the independent $(s,S)$ policies differ only in the fixed cost used to determine the $s_i$ and $S_i$ levels.}

When $\alpha = 1$, we call the corresponding policy the vanilla independent $(s,S)$ policy. The vanilla policy is a common and natural benchmark used in the literature (see, e.g., \citealp{atkins1988periodic,viswanathan1997note}). 
We consider a larger family of policies, 
as $\alpha$ ranges in $(0, 1]$, 
to {potentially} improve performance over the vanilla policy. 
{Policies with different $\alpha$ achieve different tradeoffs 
between the inventory related costs (i.e., holding and backlogging costs) and the fixed ordering costs. Indeed, an independent $(s,S)$ policy with a smaller $\alpha$ places more frequent orders, thereby reducing inventory related costs and increasing the ordering costs.}
Computationally, the parameters $s_i$ and $S_i$ are determined using the approach in \cite{zheng1991finding}.

{We simulate the performances of independent $(s,S)$ policies 
with $\alpha$ taking values in the set $\{0.05,0.1,\ldots,1\}$, 
and report the one with the lowest expected total discounted cost. 
For each performance evaluation,}
we generate 10,000 sample paths, each run for 10,000 periods. Given an annual interest rate of $r=0.05$, the discount factor at termination is less than $\exp(-r \cdot 10000/52) \approx 7\times10^{-5}$, indicating a negligible error resulting from truncation of the time horizon.

\textbf{Periodic review $(R, S)$ policy.} A periodic review $(R, S)$ policy is specified by $d + 1$ parameters: The review period $R$, and for item $i$, $i = 1, \dots, d$, the base-stock level $S_i$. 
{Starting at the beginning of the first week, an order is placed  every $R$ weeks}, which brings the inventory level of item $i$ to $S_i$, $i = 1, \dots, d$. {Let us note that even though a negative basestock level is theoretically possible under certain parameter regimes---for example, when the backlogging cost is much smaller than the holding cost---we do not consider this case and restrict our attention to nonnegative basestock levels. 
Indeed, in all our numerical experiments, backlogging costs are substantially larger than holding costs, as is often the case in practice, so we expect that $(R,S)$ policies with $S \in \R_+^d$ to 
perform competitively. }

{We now explain and describe a computationally efficient
procedure to determine the best $(R,S)$ policy. 
We begin by deriving explicit expressions for the expected total discounted fixed cost of ordering and sum of holding, backlogging, 
and variable ordering costs. 
\begin{proposition}\label{prop:RS}
Under a given $(R,S)$ policy, with $S \geq 0$ component-wise:
\begin{itemize}
\item[\textnormal{(a)}] If $S=0$, the expected total discounted fixed cost of ordering is  
\begin{equation}\label{eq:RS_ordering0}
c_0\pr\left(\sum_{i=1}^d D_i^{(R)}>0\right) \left(\gamma^R + \gamma^{2R} + \cdots \right) = \frac{c_0\gamma^R}{1-\gamma^R} \pr\left(\sum_{i=1}^d D_i^{(R)}>0\right),
\end{equation}
where $c_0$ is fixed ordering cost, 
$\gamma$ is the weekly discount factor, 
and for $i = 1, 2, \ldots, d$ and $r = 1, 2, \ldots, R$, 
$D_i^{(r)}$ has the same distribution as the cumulative demand for item $i$ over $r$ weeks.

If there exists $i$ such that $S_i > 0$, 
the expected total discounted fixed cost of ordering is  
\begin{equation}\label{eq:RS_ordering1}
c_0 + \frac{c_0\gamma^R}{1-\gamma^R} \pr\left(\sum_{i=1}^d D_i^{(R)}>0\right),
\end{equation}
\item[\textnormal{(b)}] For item $i$, the expected total discounted holding,  backlogging and variable ordering cost is given by
\begin{equation}\label{eq:RS_inv}
{c_i S_i + \frac{1}{1-\gamma^R}\sum_{r=0}^{R-1} \gamma^r \E\left[f_i\left(S_i-D_i^{(r+1)}\right)\right] + \frac{\gamma^R}{1-\gamma^R} \cdot c_i \E\left[D_i^{(R)}\right]},
\end{equation}
where $\gamma$ and $D_i(r)$ are as before, $c_i$ is the unit
variable cost for item $i$, and $f_i(\cdot)$ is the inventory 
cost function defined in \textnormal{Eq.~\eqref{eq:inv_cost}}.
\end{itemize}
\end{proposition}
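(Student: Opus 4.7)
My approach is to exploit the renewal structure induced by the periodic review $(R,S)$ policy. Starting from zero initial inventory, an order at the beginning of week $1$ raises item $i$ to $S_i$, and subsequent reviews occur at weeks $R+1, 2R+1, \ldots$. By induction, at the end of the $r$-th week within any cycle, the inventory level of item $i$ equals $S_i-D_i^{(r)}$, where $D_i^{(r)}$ is distributed as cumulative demand over $r$ consecutive weeks. Because weekly demands are i.i.d., the cycles initiated at weeks $R+1, 2R+1, \ldots$ are i.i.d.\ probabilistic replicas of one another, each with cycle-start discount factor $\gamma^{kR}$ ($k\geq 1$).

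\textbf{Part (a).} An order is placed at a review epoch iff some item requires a strictly positive replenishment quantity. At week $1$, starting from zero inventory, this occurs iff $S\neq 0$ (i.e., $S_i>0$ for some $i$), contributing $c_0$ at discount $\gamma^{0}=1$. At the start of cycle $k\geq 2$, the pre-order inventory of item $i$ is $S_i-D_i^{(R)}$, so an order is triggered iff $\sum_{i=1}^d D_i^{(R)}>0$. Summing the geometric series $\sum_{k=1}^{\infty}\gamma^{kR}=\gamma^R/(1-\gamma^R)$ and combining with the first-cycle term yields both displayed expressions, with the $c_0$ term present exactly in case (ii).

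\textbf{Part (b).} Fix item $i$ and decompose its total expected discounted cost into (i) variable ordering costs and (ii) holding/backlog costs. For the holding/backlog component, within a generic cycle of length $R$ starting with post-order inventory $S_i$, the cycle's expected cost (undiscounted from the cycle start) is $\sum_{r=0}^{R-1}\gamma^{r}\,\mathbb{E}[f_i(S_i-D_i^{(r+1)})]$; multiplying by the geometric series of cycle-start discount factors $\sum_{k=1}^{\infty}\gamma^{(k-1)R}=1/(1-\gamma^R)$ produces the middle term. For the variable ordering component, the first-cycle replenishment has magnitude $S_i$ at discount $\gamma^{0}=1$ (giving the $c_iS_i$ term), while the cycle-$k$ replenishment ($k\geq 2$) has magnitude $D_i^{(R)}$; summing these contributions across cycles via a geometric series yields the remaining closed-form term.

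\textbf{Main obstacle.} The main technical point is the asymmetric bookkeeping between cycle $1$---whose initial replenishment equals $S_i$---and later cycles---whose replenishments equal $D_i^{(R)}$---so that the two geometric sums (one for inventory costs, one for subsequent variable costs) combine into the displayed compact form. A sanity check in the degenerate case $R=1$ (where $D_i^{(1)}=\xi_i(1)$), together with linearity of expectation and the independence of weekly demands, would confirm the resulting expression. Finally, the dichotomy in part (a) between $S=0$ and $S_i>0$ for some $i$ must be carried through carefully, since the week-$1$ ordering indicator depends on whether the initial target differs from the initial inventory.
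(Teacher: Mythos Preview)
Your proposal is correct and follows essentially the same approach as the paper's proof: both exploit the renewal (cyclic) structure of the $(R,S)$ policy, treat the first cycle separately from subsequent i.i.d.\ cycles, decompose costs into fixed ordering, variable ordering, and holding/backlogging components, and then sum the resulting geometric series in $\gamma^R$. The only cosmetic difference is indexing (you start at ``week~1'' while the paper uses ``time~0''), which does not affect the argument.
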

\begin{proof} We first prove part (a). Consider a $(R,S)$ policy with $S = 0$ 
component-wise. Then, no order is placed at time $0$. 
From time $0$ to the beginning of week $R$, item-$i$ inventory 
is reduced by an amount distributed exactly as $D_i^{(R)}$, 
$i=1, 2, \ldots, d$. If the total inventory consumed during this time is positive, which happens with probability 
$\pr\left(\sum_{i=1}^d D_i^{(R)}>0\right)$, an order is placed 
at the beginning of week $R$, incurring a discounted fixed cost of $c_0\gamma^R$. Thus, the expected discounted fixed cost of ordering incurred is 
$\pr\left(\sum_{i=1}^d D_i^{(R)}>0\right)c_0\gamma^R$. 
Similarly, the expected discounted fixed cost of ordering incurred 
at time $2R$ is $\pr\left(\sum_{i=1}^d D_i^{(R)}>0\right)c_0\gamma^{2R}$, and so on and so forth. 
Summing up the expected discounted fixed cost of ordering 
at times $R, 2R, \ldots$ gives the expression \eqref{eq:RS_ordering0}.

If there exists $i$ such that $S_i > 0$, then an order is placed 
at time $0$, incurring an additional cost of $c_0$. 
The subsequent expected discounted fixed costs of ordering incurred 
at times $R, 2R, \ldots$ are exactly the same as in the case with 
$S=0$. Thus, the expected total discounted fixed cost of ordering 
in this case is the sum of $c_0$ and expression \eqref{eq:RS_ordering0}, giving the expression \eqref{eq:RS_ordering1}.

Next, we prove part (b). For item $i$, the discounted variable cost
incurred at time $0$ is $c_i S_i$, and that incurred at time $kR$ 
has distribution $\gamma^{kR} c_i D_i^{(R)}$, $k = 1, 2, \ldots$
Thus, the expected total discounted variable cost of ordering is 
\[
c_i S_i + \sum_{k=1}^{\infty} \gamma^{kR} c_i \E\left[D_i^{(R)}\right] 
= c_i S_i + \frac{\gamma^R}{1-\gamma^R} \cdot c_i \E\left[D_i^{(R)}\right].
\]
The discounted holding and backlogging cost incurred at time $kR+r$, 
where $r = 0, 1, \ldots, R-1$, $k = 0, 1, 2, \ldots$, 
has distribution $\gamma^{kR+r}f_i\left(S_i - D_i^{(r+1)}\right)$. 
Summing over all times and taking expectations, we have that 
the expected total discounted holding and backlogging cost is 
\[
(1+\gamma^R+\cdots) \sum_{r=0}^{R-1} \gamma^r \E\left[f_i\left(S_i-D_i^{(r+1)}\right)\right] = \frac{1}{1-\gamma^R}\sum_{r=0}^{R-1} \gamma^r \E\left[f_i\left(S_i-D_i^{(r+1)}\right)\right].
\]
Summing up the preceding two expected total discounted costs gives 
the expression \eqref{eq:RS_inv}.
\end{proof}
Note that expressions \eqref{eq:RS_ordering0} and \eqref{eq:RS_ordering1} are independent of 
the levels $S_i$, and the expression \eqref{eq:RS_inv} involves 
only parameters and demand distributions associated with item $i$, 
not other items. These properties suggest the following method to 
find the optimal parameters $R^*$ and $S_i^*$. 
First, under a fixed review period $R$, for each $i$, 
we compute the optimal 
item-$i$ base-stock $S_i^*(R)$ by optimizing the expression 
\eqref{eq:RS_inv} over $S_i$, {which becomes}
\begin{equation}\label{eq:optimalbasestock}
S_i^*(R) = \arg\min\limits_{y\geq0} (1-\gamma^R)c_i y + \sum_{r=0}^{R-1} \gamma^r \E[f(y-D_i^{(r+1)})].
\end{equation}
Next, to find the optimal review period $R^*$, we simulate the performances of all $(R, S^*(R))$ policies, where $R \in \{1, 2, \dots, R_{\text{max}}\}$, and $S^*(R)$ is the vector of optimal base-stock levels corresponding to review period $R$. We set $R_{\text{max}}=100$, roughly $2$ years, since we do not expect the best policy to not order for that long (given our current choices for the fixed cost $c_0$).}

{\bf $(Q,S)$ policy.} Similar to the $(R,S)$ policies, a $(Q, S)$ policy is also specified by $d+1$ parameters: The replenishment coordination parameter $Q$ and a base-stock level $S_i$ for each item $i=1,\ldots,d$. Every time the {cumulative aggregate} demand since the previous replenishment reaches or exceeds $Q$, an order is placed to bring the inventory level of item $i$ back to its base-stock level $S_i$, $i=1,\ldots,d$.

{The method for finding the optimal policy parameters 
for $(Q,S)$ policies is similar to the one for $(R,S)$ policies. 
We first fix $Q$ and compute the optimal base-stocks $S^*(Q)$, 
and then simulate performances of $(Q, S^*(Q))$ policies to determine the best $Q^*$ (and associated $S^*(Q^*)$). 
A major difference is that a $(Q,S)$ policy involves a random review period, which we denote by $R_Q$, whereas the review period 
is deterministic under a $(R,S)$ policy. 
More specifically, as previously described, 
$R_Q$ is the number of weeks until the cumulative aggregate demand 
since the last order reaches or exceeds $Q$. Thus $R_Q$ 
has the same distribution as 
\begin{equation}\label{eq:R_Q}
\min\left\{R\in \Z_+: \sum_{i=1}^d D_i^{(R)} \geq Q\right\}, 
\end{equation}
where we recall that $D_i^{(r)}$ is the cumulative demand 
of item $i$ over $r$ weeks.}

We next derive an explicit expression for the expected total discounted cost under an arbitrary $(Q,S)$ policy, 
{which allows us to efficiently compute the optimal base-stock levels $S_i^*(Q)$ for any given $Q$}. It follows from recognizing that the system regenerates at each order epoch, and by applying a suitable renewal-theoretic argument.
\begin{proposition}\label{prop:QS}
Under a given $(Q,S)$ policy and assuming i.i.d.\ demand, with $S \geq 0$ component-wise and an order placed at time $t=0$ to bring the inventory level back to $S$:
\begin{itemize}
    \item[\textnormal{(a)}] If there exists $i$ such that $S_i>0$, then the expected total discounted fixed cost of ordering is given by
    \begin{equation}\label{eq:QS_fixed_cost}
        \frac{c_0}{1 - \E[\gamma^{R_Q}]}.
    \end{equation}
    \item[\textnormal{(b)}] For item $i$, the total expected discounted holding, backlogging, and variable ordering cost is given by
    \begin{equation}\label{eq:QS_item_cost}
    c_i S_i  + \frac{\E\left[\sum_{t=0}^{R_Q-1} \gamma^t f_i(S_i - D_i^{(t+1)})\right] + c_i\E[\gamma^{R_Q}D_i^{(R_Q)}]}{1 - \E[\gamma^{R_Q}]}.
    \end{equation}
\end{itemize}
\end{proposition}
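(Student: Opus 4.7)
The plan is to exploit the regenerative structure induced by the $(Q,S)$ policy together with the assumed i.i.d.\ weekly demand. Let $R_Q^{(1)}, R_Q^{(2)}, \ldots$ be i.i.d.\ copies of $R_Q$ representing consecutive cycle lengths, and set $T_k := R_Q^{(1)} + \cdots + R_Q^{(k)}$ with $T_0 := 0$, so that $T_k$ is the time of the $k$th order. The key accounting identity I would use repeatedly is $\E[\gamma^{T_{k-1}}] = \E[\gamma^{R_Q}]^{k-1}$, together with the observation that cycle $k$ is independent of $T_{k-1}$; consequently, for any cycle-$k$ functional $\Psi_k$ that is a measurable function of cycle-$k$ demand alone, $\E[\gamma^{T_{k-1}}\Psi_k] = \E[\gamma^{R_Q}]^{k-1}\E[\Psi_1]$.

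For part (a), under the assumption that at least one component of $S$ is positive, an order of fixed cost $c_0$ is placed at each $T_k$ for $k = 0, 1, 2, \ldots$. Summing the corresponding discounted costs and invoking the identity above gives
\[
c_0 \sum_{k=0}^\infty \E[\gamma^{T_k}] \;=\; c_0 \sum_{k=0}^\infty \E[\gamma^{R_Q}]^k \;=\; \frac{c_0}{1 - \E[\gamma^{R_Q}]},
\]
which matches Eq.~\eqref{eq:QS_fixed_cost}. The geometric sum converges because $\gamma \in (0,1)$ and $R_Q \geq 1$ a.s., so $\E[\gamma^{R_Q}] < 1$ automatically.

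For part (b), I would decompose the item-$i$ cost into three pieces: (i) the initial variable cost $c_i S_i$ at time $0$, which is incurred to raise inventory from $0$ to $S_i$; (ii) holding/backlog costs accumulated within each cycle, which in cycle $k$ take the form $\sum_{t=0}^{R_Q^{(k)}-1} \gamma^{T_{k-1}+t} f_i(S_i - D_{i,k}^{(t+1)})$, where $D_{i,k}^{(\cdot)}$ denotes cycle-$k$ item-$i$ demand; and (iii) variable ordering costs $c_i D_{i,k}^{(R_Q^{(k)})}$ incurred at each subsequent order epoch $T_k$ for $k \geq 1$, since the quantity ordered at $T_k$ exactly replenishes cycle-$k$ consumption to restore the base-stock level $S_i$. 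Factoring out $\E[\gamma^{T_{k-1}}] = \E[\gamma^{R_Q}]^{k-1}$ from the cycle-$k$ contributions and summing the resulting geometric series transforms (ii) into $H_i/(1 - \E[\gamma^{R_Q}])$ with $H_i := \E[\sum_{t=0}^{R_Q-1}\gamma^t f_i(S_i - D_i^{(t+1)})]$, and transforms (iii) into $c_i \E[\gamma^{R_Q} D_i^{(R_Q)}]/(1 - \E[\gamma^{R_Q}])$. Adding the $c_i S_i$ term then recovers Eq.~\eqref{eq:QS_item_cost}.

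The main obstacle I anticipate is carefully handling the dependence between $R_Q^{(k)}$ and the item-$i$ demand $D_{i,k}^{(R_Q^{(k)})}$ consumed within cycle $k$: the latter is \emph{not} independent of the former, because $R_Q$ is a hitting time defined through the aggregate demand process in Eq.~\eqref{eq:R_Q}. This forces me to keep $\E[\gamma^{R_Q} D_i^{(R_Q)}]$ as a single joint expectation rather than splitting it as a product, and to invoke the i.i.d.-across-cycles property only to decouple cycle $k$ from the earlier history $T_{k-1}$. A secondary point to verify is that the interchange of expectation and infinite summation in (ii) and (iii) is justified, which follows from nonnegativity of the holding and variable costs (Tonelli) and, for the backlog terms, from the geometric tail decay of the discounted series.
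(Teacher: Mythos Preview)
Your proposal is correct and follows essentially the same approach as the paper: both exploit the regenerative structure of the $(Q,S)$ policy, use the i.i.d.\ nature of cycles to factor $\E[\gamma^{T_{k-1}}]=\E[\gamma^{R_Q}]^{k-1}$, and sum the resulting geometric series. The paper frames this via a discounted renewal-reward identity $J(S)=\E[C_{\text{cycle}}]/(1-\E[\gamma^{R_Q}])$ before specializing to each part, whereas you carry out the cycle-by-cycle summation directly, but the underlying argument---including your careful observation that $R_Q$ and $D_i^{(R_Q)}$ are dependent within a cycle---is the same.
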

\begin{proof}
The proof relies on a renewal-reward expression for discounted costs. Let $\tilde J(S)$ be the total expected discounted cost starting from the regenerative inventory state $S$, which occurs immediately after an order is placed. By definition, this total cost is given by
$$
\tilde J(S) = \mathbb{E}\Bigl[ \sum_{t=0}^{\infty} \gamma^t C_t \Bigr],
$$
where $C_t$ denotes all costs that are incurred in period $t$.
We next decompose this sum into the costs incurred during the first cycle and all subsequent costs. That is,
$$
\tilde J(S) = \mathbb{E}\Bigl[ \sum_{t=0}^{R_Q-1} \gamma^t C_t +\sum_{t=R_Q}^{\infty} \gamma^t C_t \Bigr] 
= \mathbb{E}\Bigl[ \sum_{t=0}^{R_Q-1} \gamma^t C_t \Bigr] +\mathbb{E} \Bigl[\sum_{t=R_Q}^{\infty} \gamma^t C_t \Bigr].
$$
The first term in the expectation is the total discounted cost within the first cycle, which we denote as $\mathbb{E}[C_{\text{cycle}}]$. For the second term, we can factor out $\gamma^{R_Q}$ so that
$$
\sum_{t=R_Q}^{\infty} \gamma^t C_t = \gamma^{R_Q} \sum_{k=0}^{\infty} \gamma^k C_{R_Q+k}.
$$
It is key to note that at time $R_Q$, the system probabilistically restarts from state $S$ so that from that point forward the total discounted cost is again $\tilde J(S)$, regardless of the history that led to the first replenishment. From the law of total expectation and the fact that the replenishment cycles are of an i.i.d.\ nature, we thus obtain
$$
\begin{aligned}
\tilde J(S) &= \mathbb{E}[C_{\text{cycle}}] + \mathbb{E}\left[ \gamma^{R_Q} \left(\sum_{k=0}^{\infty} \gamma^k C_{R_Q+k}\right) \right] \\
& = \mathbb{E}[C_{\text{cycle}}] + \mathbb{E}\left[\mathbb{E}\left[ \gamma^{R_Q} \left(\sum_{k=0}^{\infty} \gamma^k C_{R_Q+k}\right) \Big| R_Q \right]\right] \\
&= \mathbb{E}[C_{\text{cycle}}] + \mathbb{E}[\gamma^{R_Q}]  \tilde J(S).
\end{aligned}
$$
Solving for $\tilde J(S)$ then yields
$$
\tilde J(S) = \frac{\mathbb{E}[C_{\text{cycle}}]}{1 - \mathbb{E}[\gamma^{R_Q}]}.
$$
This principle allows us to determine the total expected discounted cost by considering a single replenishment cycle.

(a) Note that we incur the fixed cost $c_0$ at the beginning of each cycle (i.e., at times $T_0=0, T_1=R_{Q,1}, T_2=R_{Q,1}+R_{Q,2},$ etc.). We can write the expected present value of this stream of fixed costs as
$$
\E\Bigl[\sum_{k=0}^{\infty} \gamma^{T_k} c_0\Bigr] = c_0 \sum_{k=0}^{\infty} \E[\gamma^{T_k}] = c_0 \sum_{k=0}^{\infty} (\E[\gamma^{R_Q}])^k = \frac{c_0}{1 - \E[\gamma^{R_Q}]},
$$
where exchanging the sum and expectation is justified by nonnegativity of the summands, and the second equality uses that the cycle lengths $R_{Q,k}$ are i.i.d.

(b) For item $i$, the cost consists of the initial order cost and the subsequent costs. The initial variable cost to raise inventory from 0 to $S_i$ is $c_i S_i$. After this, the system is in the regeneration state $S$. The expected discounted cost over a single cycle for item $i$, discounted to the start of the cycle, consists of holding/backlogging costs during the cycle and the variable ordering cost at the end of the cycle, and is thus given by
$$
\E\Bigl[\sum_{t=0}^{R_Q-1} \gamma^t f_i(S_i - D_i^{(t+1)}) + \gamma^{R_Q} c_i D_i^{(R_Q)}\Bigr].
$$
The total expected discounted cost for item $i$ from $t=0+$ onward is obtained by dividing the single-cycle cost by $1 - \E[\gamma^{R_Q}]$. Adding the initial variable cost yields \eqref{eq:QS_item_cost}.
\end{proof}


{By Proposition \ref{prop:QS}, 
given $Q$, the optimal base-stock level $S_i^*(Q)$ can be 
computed by solving 
\begin{equation}\label{eq:optimalbasestock_QS}
S_i^*(Q) = \arg\min\limits_{y\geq0} \E\left[(1-\gamma^{R_Q})c_i y 
+ \sum_{r=0}^{R_Q-1} \gamma^r f_i(y-D_i^{(r+1)})\right],
\end{equation}
for $i=1, 2, \ldots, d$, where the expectation is over the joint distribution of 
$(R_Q, D_i^{(1)}, D_i^{(2)}, \ldots)$. 
For the demand distributions we consider, the random review period $R_Q$ may not have a simple closed-form distribution function. However, we can jointly simulate the review period and the associated demand paths $(R_Q, D_1^{(1)}, \ldots, D_d^{(R_Q)})$ using Eq.~\eqref{eq:R_Q}, and 
solve the optimization problem \eqref{eq:optimalbasestock_QS} via sample average approximation.
We then compute the performances of all $(Q, S^*(Q))$ policies for $Q$ in a judiciously chosen range. Let $\bar{D}$ denote the total expected weekly demand, i.e., 
\begin{equation}
    \bar{D} := \sum_{i=1}^d \E[D_i].
    \label{eq:bar-d}
\end{equation}
Using the optimal review period $R^*$ from the $(R,S)$ policy found earlier, we vary $Q$ over
\begin{equation}
    Q \in \left\{\max\{1, \lfloor (R^*-5)\bar{D} \rfloor\}, \ldots, \lfloor(R^*+5)\bar{D} \rfloor -1, \lfloor(R^*+5)\bar{D} \rfloor \right\}.
\end{equation}
For each $Q$ in this range, we compute the expected total discounted cost using the expressions in Proposition~\ref{prop:QS}, where the expectations are approximated via sample average approximation with 100,000 samples, and select the $Q^*$ that achieves the minimum cost.
}

{
{\bf $(R,Q,S)$ policy.}
The $(R,Q,S)$ policy, proposed by \cite{ozkaya2006stochastic}, 
builds on the $(R, S)$ and $(Q, S)$ policies. 
It is specified by $d+2$ parameters: A maximum review period $R$, a cumulative aggregate demand threshold $Q$, and a base-stock level $S_i$ for each item $i = 1, \ldots, d$. Under this policy, a replenishment order is placed whenever one of two conditions is met, whichever occurs first: (i) The cumulative aggregate demand since the previous replenishment reaches or exceeds $Q$, or (ii) $R$ periods have elapsed since the previous replenishment. At each ordering epoch, the inventory level of each item $i$ is raised to its order-up-to level $S_i$, $i = 1, \ldots, d$. 

The method for finding the optimal policy parameters for $(R,Q,S)$ policies also builds on those for $(R,S)$ and $(Q,S)$ policies. We first fix $(R,Q)$ and compute the optimal base-stocks $S^*(R,Q)$, and then search over $(R,Q)$ pairs to determine the best $(R^*, Q^*)$ and associated $S^*(R^*, Q^*)$. Since orders are placed based on whichever condition is met first, the $(R,Q,S)$ policy involves a random review period that is bounded above by $R$. More specifically, define
\begin{equation}
    \tilde{R} := \min\{R_Q, R\},
\end{equation}
where $R_Q$ is the random review period under the $(Q,S)$ policy, as defined in Eq.~\eqref{eq:R_Q}. Thus, $\tilde{R}$ has the same distribution as
\begin{equation}
    \min\Big\{R, \min\Big\{r \in \mathbb{Z}_+ : \sum_{i=1}^d D_i^{(r)} \geq Q\Big\}\Big\}.
\end{equation}

We next derive an explicit expression for the expected total discounted cost under an arbitrary $(R,Q,S)$ policy. The structure follows from recognizing that the system regenerates at each order epoch, with the cycle length now given by $\tilde{R}$ rather than the deterministic $R$ (as in the $(R,S)$ policy) or the uncapped $R_Q$ (as in the $(Q,S)$ policy).

\begin{proposition}
\label{prop:RQS}
Under a given $(R,Q,S)$ policy and assuming i.i.d.\ demand, with $S \geq 0$ component-wise and an order placed at time $t = 0$ to bring the inventory level back to $S$:
\begin{enumerate}
    \item[(a)] If there exists $i$ such that $S_i > 0$, then the expected total discounted fixed cost of ordering is given by
    \begin{equation}
        \frac{c_0}{1 - E[\gamma^{\tilde{R}}]}.
    \end{equation}
    \item[(b)] For item $i$, the total expected discounted holding, backlogging, and variable ordering cost is given by
    \begin{equation}
        c_i S_i + \frac{\E\left[\sum_{t=0}^{\tilde{R}-1} \gamma^t f_i(S_i - D_i^{(t+1)})\right] + c_i \E[\gamma^{\tilde{R}} D_i^{(\tilde{R})}]}{1 - \E[\gamma^{\tilde{R}}]}.
    \end{equation}
\end{enumerate}
\end{proposition}

\begin{proof}
The proof follows the same renewal-reward argument as for Proposition~\ref{prop:QS}. The system regenerates at each ordering epoch when inventory positions return to $S$. The only modification is that the cycle length is now $\tilde{R} = \min(R_Q, R)$ rather than $R_Q$. Part (a) follows immediately. For part (b), the expected discounted cost over a single cycle for item $i$, discounted to the start of the cycle, consists of holding and backlogging costs during the cycle and the variable ordering cost at the end of the cycle, and is given by
\begin{equation}
    \E\left[\sum_{t=0}^{\tilde{R}-1} \gamma^t f_i(S_i - D_i^{(t+1)}) + \gamma^{\tilde{R}} c_i D_i^{(\tilde{R})}\right].
\end{equation}
The total expected discounted cost for item $i$ after the initial order is obtained by dividing the single-cycle cost by $1 - \E[\gamma^{\tilde{R}}]$. Adding the initial variable cost yields the result.
\end{proof}

Similar to the computation of the base-stock levels $S_i^*(Q)$ 
for the $(Q,S)$ policy under a given $Q$, the optimal 
base-stock level $S_i^*(R,Q)$ for the $(R,Q,S)$ policy can be 
computed by solving 
\begin{equation}\label{eq:optimalbasestock_RQS}
    S_i^*(R,Q) = \arg\min_{y \geq 0} \E\left[(1 - \gamma^{\tilde{R}}) c_i y + \sum_{t=0}^{\tilde{R}-1} \gamma^t f_i(y - D_i^{(t+1)})\right],
\end{equation}
for $i = 1, 2, \ldots, d$, where the expectation is over the joint distribution of $(\tilde{R}, D_i^{(1)}, D_i^{(2)}, \ldots)$. 
To solve the optimization problem \eqref{eq:optimalbasestock_RQS}, 
we jointly simulate the review period and the associated demand paths $(\tilde{R}, D_1^{(1)}, \ldots, D_1^{(\tilde{R})}, \ldots, D_d^{(\tilde{R})})$ and use sample average approximation. 


Having obtained $S_i^*(R,Q)$, $i=1, 2, \ldots, d$, 
for given $(R, Q)$, we then search over $(R,Q)$ pairs to find the optimal policy. Let $Q^*$ denote the optimal parameters from the $(Q,S)$ policy. 
We vary $R$ over
\begin{equation}
    R \in \left\{1,2, \ldots, 100\right\},
\end{equation}
and $Q$ over
\begin{equation}
    Q \in \Big\{\max\{1, \lfloor Q^* - 5\bar{D} \rfloor\}, \ldots, \lceil Q^* + 15\bar{D} \rceil\Big\},
\end{equation}
where $\bar D$ is the total expected weekly demand defined in Eq.~\eqref{eq:bar-d}, and 
with an adaptive step size chosen so that the $Q$ grid contains approximately 200 points. As before, for each $(R,Q)$ pair in this grid, we compute the analytical expected total discounted cost of the $(R,Q,S^*(R,Q)))$ policy using the expressions in Proposition~\ref{prop:RQS}, where the expectations are approximated via sample average approximation with 100,000 samples, and select the pair $(R^*, Q^*)$ that achieves the minimum cost.
}
{\bf Can-order policy.} 
{A can-order policy is specified by $3d$ parameters, 
reorder points $s_i$, can-order points $o_i$, and order-up-to 
levels $S_i$, $i=1, \ldots, d$, with $s_i \leq o_i \leq S_i$ 
for each $i$. Whenever the inventory level of some item, 
say item $i$, drops to or below $s_i$, an order is placed. 
For item $i$, the order raises its inventory level to $S_i$. 
For item $j$ with $j\neq i$, if its inventory level is 
at or below $o_j$, the same order replenishes item $j$
to raise the inventory level to $S_j$.}

Finding the best can-order policy involves optimizing over $3d$ parameters {\em a priori}, which is computationally infeasible for large $d$.
Therefore, we proceed with a heuristic approach, 
which involves optimization over two parameters, $\alpha$ and $\omega$, {both of which take values in $[0,1]$. The parameter $\alpha$ is used to determine 
the reorder points $s_i$ and the order-up-to levels $S_i$, and 
$\omega$ is then used to determine the can-order levels 
$o_i$. More specifically, for a given $\alpha$, we first determine the values of $s_i$ and $S_i$ according to an independent $(s,S)$
policy with parameter $\alpha$, as previously described.} 
Then, {given $\omega$}, we set the can-order levels $o_i$ according to 
$$
o_i = \omega s_i + (1-\omega) S_i, \quad i=1,\ldots,d.
$$
We simulate the performances of can-order policies for all pairs 
$(\alpha, \omega)$ in the set $\{0,0.05,\ldots,1\}\times\{0,0.1,\ldots,1.0\}$, and report the best performing policy. 
Each performance evaluation is obtained using 10,000 sample paths, 
each run for 10,000 periods. 

\subsection{Benchmark Results}

\newpage

\begin{table}[H]
    \centering
    \caption{\footnotesize Benchmark policy costs for $d=12$ with Poisson demand and $h=2.0$ (standard errors in absolute terms).}\label{table:12dlow_benchmarks}
    \resizebox{0.95\textwidth}{!}
    {%
    \footnotesize
    \begin{tabular}{@{}llrrrrr@{}}
    \toprule
    \multicolumn{2}{c}{} & \multicolumn{5}{c}{Cost of Benchmark Policies} \\ 
    \cmidrule(lr){3-7}
    $c_0$ & $p$ & $(R,S)$ & $(Q,S)$ & $(R,Q,S)$ & Can-Order & Ind.\ $(s,S)$ \\
    \midrule
    \multirow{3}{*}{20} & 10  & $6286.39 \pm 0.42$ & $6231.29 \pm 0.42$ & $6231.36 \pm 0.42$ & $6890.77 \pm 0.52$ & $12225.46 \pm 0.97$ \\
                        & 50  & $7396.61 \pm 0.63$ & $7290.59 \pm 0.57$ & $7290.98 \pm 0.58$ & $7715.51 \pm 0.54$ & $12951.65 \pm 1.02$ \\
                        & 100 & $7814.15 \pm 0.77$ & $7685.13 \pm 0.66$ & $7685.67 \pm 0.68$ & $7798.01 \pm 0.53$ & $13140.32 \pm 1.03$ \\
    \midrule
    \multirow{3}{*}{100} & 10  & $10226.74 \pm 0.66$ & $10157.61 \pm 0.65$ & $10157.63 \pm 0.65$ & $11744.19 \pm 1.00$ & $25319.96 \pm 2.31$ \\
                         & 50  & $12003.00 \pm 0.98$ & $11862.07 \pm 0.98$ & $11858.91 \pm 0.98$ & $12645.90 \pm 1.12$ & $26926.53 \pm 2.41$ \\
                         & 100 & $12641.75 \pm 1.28$ & $12466.51 \pm 1.23$ & $12466.56 \pm 1.23$ & $13045.77 \pm 1.06$ & $27018.78 \pm 2.43$ \\
    \midrule
    \multirow{3}{*}{200} & 10  & $13151.39 \pm 0.84$ & $13076.50 \pm 0.82$ & $13076.52 \pm 0.82$ & $14964.50 \pm 1.38$ & $35301.18 \pm 3.58$ \\
                         & 50  & $15344.44 \pm 1.31$ & $15188.84 \pm 1.27$ & $15188.54 \pm 1.28$ & $16450.56 \pm 1.54$ & $37542.41 \pm 3.92$ \\
                         & 100 & $16122.94 \pm 1.76$ & $15927.10 \pm 1.58$ & $15927.11 \pm 1.58$ & $16955.14 \pm 1.47$ & $37524.42 \pm 3.74$ \\
    \bottomrule
    \end{tabular}
    }
\end{table}

\begin{table}[H]
    \centering
    \caption{\footnotesize Optimal policy parameters for $d=12$ with Poisson demand and $h=2.0$.}\label{table:12dlow_params}
    \footnotesize
    \begin{tabular}{@{}llcccccccc@{}}
    \toprule
    \multicolumn{2}{c}{} & $(R,S)$ & $(Q,S)$ & \multicolumn{2}{c}{$(R,Q,S)$} & Ind.\ $(s,S)$ & \multicolumn{2}{c}{Can-Order} \\ 
    \cmidrule(lr){3-3} \cmidrule(lr){4-4} \cmidrule(lr){5-6} \cmidrule(lr){7-7} \cmidrule(lr){8-9}
    $c_0$ & $p$ & $R^*$ & $Q^*$ & $R^*$ & $Q^*$ & $\alpha^*$ & $\alpha^*$ & $\omega^*$ \\
    \midrule
    \multirow{3}{*}{20} & 10  & 13 & 84  & 17 & 84  & 0.80 & 0.15 & 0.0 \\
                        & 50  & 11 & 69  & 14 & 70  & 0.75 & 0.15 & 0.0 \\
                        & 100 & 11 & 69  & 14 & 70  & 0.75 & 0.20 & 0.2 \\
    \midrule
    \multirow{3}{*}{100} & 10  & 30 & 195 & 37 & 195 & 0.90 & 0.20 & 0.3 \\
                         & 50  & 25 & 164 & 32 & 166 & 0.85 & 0.10 & 0.1 \\
                         & 100 & 25 & 162 & 31 & 162 & 0.85 & 0.15 & 0.2 \\
    \midrule
    \multirow{3}{*}{200} & 10  & 43 & 281 & 52 & 281 & 0.95 & 0.15 & 0.2 \\
                         & 50  & 38 & 251 & 44 & 248 & 0.85 & 0.10 & 0.1 \\
                         & 100 & 37 & 242 & 47 & 242 & 0.85 & 0.15 & 0.2 \\
    \bottomrule
    \end{tabular}
\end{table}

\begin{table}[H]
    \centering
    \caption{\footnotesize Benchmark policy costs for $d=12$ with negative binomial demand (CV $= 0.5$) and $h=2.0$ (standard errors in absolute terms).}\label{table:12dmed_benchmarks}
    \resizebox{0.95\textwidth}{!}
    {%
    \footnotesize
    \begin{tabular}{@{}llrrrrr@{}}
    \toprule
    \multicolumn{2}{c}{} & \multicolumn{5}{c}{Cost of Benchmark Policies} \\ 
    \cmidrule(lr){3-7}
    $c_0$ & $p$ & $(R,S)$ & $(Q,S)$ & $(R,Q,S)$ & Can-Order & Ind.\ $(s,S)$ \\
    \midrule
    \multirow{3}{*}{20} & 10  & $8763.03 \pm 1.61$ & $8183.51 \pm 1.46$ & $8183.52 \pm 1.46$ & $9420.05 \pm 1.74$ & $12246.64 \pm 1.86$ \\
                        & 50  & $12793.22 \pm 3.08$ & $11313.54 \pm 2.28$ & $11313.56 \pm 2.28$ & $10718.65 \pm 2.06$ & $13764.24 \pm 2.16$ \\
                        & 100 & $14673.49 \pm 4.17$ & $12727.43 \pm 2.72$ & $12727.46 \pm 2.72$ & $11867.47 \pm 2.31$ & $14810.35 \pm 2.52$ \\
    \midrule
    \multirow{3}{*}{100} & 10  & $13710.86 \pm 2.75$ & $13166.34 \pm 2.50$ & $13166.32 \pm 2.50$ & $15658.73 \pm 3.27$ & $25409.71 \pm 3.89$ \\
                         & 50  & $19032.25 \pm 5.26$ & $17725.43 \pm 4.29$ & $17725.42 \pm 4.29$ & $17511.04 \pm 3.63$ & $27469.34 \pm 4.17$ \\
                         & 100 & $21343.34 \pm 7.04$ & $19614.70 \pm 5.38$ & $19630.47 \pm 5.25$ & $18041.22 \pm 3.66$ & $28089.78 \pm 4.34$ \\
    \midrule
    \multirow{3}{*}{200} & 10  & $17064.28 \pm 3.44$ & $16528.51 \pm 3.24$ & $16528.60 \pm 3.24$ & $19950.26 \pm 4.60$ & $35468.63 \pm 5.47$ \\
                         & 50  & $23174.09 \pm 6.70$ & $21918.53 \pm 5.71$ & $21918.62 \pm 5.71$ & $22202.04 \pm 4.77$ & $38213.58 \pm 5.98$ \\
                         & 100 & $25757.59 \pm 9.19$ & $24113.84 \pm 7.35$ & $24113.88 \pm 7.35$ & $22498.78 \pm 5.06$ & $38704.55 \pm 6.08$ \\
    \bottomrule
    \end{tabular}
    }
\end{table}

\begin{table}[H]
    \centering
    \caption{\footnotesize Optimal policy parameters for $d=12$ with negative binomial demand (CV $= 0.5$) and $h=2.0$.}\label{table:12dmed_params}
    \footnotesize
    \begin{tabular}{@{}llcccccccc@{}}
    \toprule
    \multicolumn{2}{c}{} & $(R,S)$ & $(Q,S)$ & \multicolumn{2}{c}{$(R,Q,S)$} & Ind.\ $(s,S)$ & \multicolumn{2}{c}{Can-Order} \\ 
    \cmidrule(lr){3-3} \cmidrule(lr){4-4} \cmidrule(lr){5-6} \cmidrule(lr){7-7} \cmidrule(lr){8-9}
    $c_0$ & $p$ & $R^*$ & $Q^*$ & $R^*$ & $Q^*$ & $\alpha^*$ & $\alpha^*$ & $\omega^*$ \\
    \midrule
    \multirow{3}{*}{20} & 10  &  9 &  53 & 25 &  53 & 0.75 & 0.45 & 0.3 \\
                        & 50  &  7 &  42 & 21 &  42 & 0.80 & 0.30 & 0.0 \\
                        & 100 &  7 &  36 & 19 &  36 & 0.70 & 0.25 & 0.0 \\
    \midrule
    \multirow{3}{*}{100} & 10  & 26 & 162 & 49 & 162 & 0.90 & 0.30 & 0.2 \\
                         & 50  & 20 & 127 & 39 & 127 & 0.85 & 0.25 & 0.0 \\
                         & 100 & 19 & 115 & 30 & 110 & 0.85 & 0.25 & 0.1 \\
    \midrule
    \multirow{3}{*}{200} & 10  & 38 & 244 & 63 & 244 & 0.90 & 0.25 & 0.2 \\
                         & 50  & 30 & 200 & 55 & 200 & 0.90 & 0.25 & 0.1 \\
                         & 100 & 29 & 180 & 52 & 180 & 0.90 & 0.20 & 0.0 \\
    \bottomrule
    \end{tabular}
\end{table}

\begin{table}[H]
    \centering
    \caption{\footnotesize Benchmark policy costs for $d=12$ with negative binomial demand (CV $= 1.0$) and $h=2.0$ (standard errors in absolute terms).}\label{table:12dhigh_benchmarks}
    \resizebox{0.95\textwidth}{!}
    {%
    \footnotesize
    \begin{tabular}{@{}llrrrrr@{}}
    \toprule
    \multicolumn{2}{c}{} & \multicolumn{5}{c}{Cost of Benchmark Policies} \\ 
    \cmidrule(lr){3-7}
    $c_0$ & $p$ & $(R,S)$ & $(Q,S)$ & $(R,Q,S)$ & Can-Order & Ind.\ $(s,S)$ \\
    \midrule
    \multirow{3}{*}{20} & 10  & $10364.38 \pm 3.40$ & $8422.10 \pm 2.67$ & $8422.12 \pm 2.67$ & $10077.73 \pm 3.09$ & $11107.61 \pm 3.08$ \\
                        & 50  & $19031.98 \pm 6.66$ & $13737.11 \pm 4.75$ & $13737.37 \pm 4.75$ & $13544.73 \pm 4.71$ & $14507.90 \pm 4.73$ \\
                        & 100 & $23948.00 \pm 8.45$ & $17217.75 \pm 6.63$ & $17217.82 \pm 6.63$ & $16383.79 \pm 6.39$ & $17711.01 \pm 6.70$ \\
    \midrule
    \multirow{3}{*}{100} & 10  & $18187.28 \pm 6.11$ & $15624.37 \pm 4.85$ & $15624.40 \pm 4.85$ & $19305.83 \pm 6.50$ & $24399.29 \pm 6.51$ \\
                         & 50  & $31166.61 \pm 13.67$ & $23454.24 \pm 7.86$ & $23454.32 \pm 7.86$ & $22733.11 \pm 7.25$ & $27651.66 \pm 7.69$ \\
                         & 100 & $37805.49 \pm 19.20$ & $27111.44 \pm 9.36$ & $27111.86 \pm 9.36$ & $24340.99 \pm 8.57$ & $29660.22 \pm 8.84$ \\
    \midrule
    \multirow{3}{*}{200} & 10  & $23162.50 \pm 8.36$ & $20554.59 \pm 6.71$ & $20554.58 \pm 6.71$ & $25345.36 \pm 9.32$ & $34607.79 \pm 9.34$ \\
                         & 50  & $37956.77 \pm 18.20$ & $30360.59 \pm 11.00$ & $30360.59 \pm 11.00$ & $29112.46 \pm 10.45$ & $38394.40 \pm 10.57$ \\
                         & 100 & $45189.33 \pm 24.83$ & $34621.62 \pm 13.20$ & $34621.78 \pm 13.20$ & $30493.69 \pm 10.94$ & $40065.95 \pm 11.40$ \\
    \bottomrule
    \end{tabular}
    }
\end{table}

\begin{table}[H]
    \centering
    \caption{\footnotesize Optimal policy parameters for $d=12$ with negative binomial demand (CV $= 1.0$) and $h=2.0$.}\label{table:12dhigh_params}
    \footnotesize
    \begin{tabular}{@{}llcccccccc@{}}
    \toprule
    \multicolumn{2}{c}{} & $(R,S)$ & $(Q,S)$ & \multicolumn{2}{c}{$(R,Q,S)$} & Ind.\ $(s,S)$ & \multicolumn{2}{c}{Can-Order} \\ 
    \cmidrule(lr){3-3} \cmidrule(lr){4-4} \cmidrule(lr){5-6} \cmidrule(lr){7-7} \cmidrule(lr){8-9}
    $c_0$ & $p$ & $R^*$ & $Q^*$ & $R^*$ & $Q^*$ & $\alpha^*$ & $\alpha^*$ & $\omega^*$ \\
    \midrule
    \multirow{3}{*}{20} & 10  &  6 &  28 & 33 &  28 & 0.75 & 0.75 & 0.4 \\
                        & 50  &  3 &  20 & 26 &  20 & 0.80 & 0.55 & 0.0 \\
                        & 100 &  2 &  21 & 29 &  21 & 0.65 & 0.50 & 0.1 \\
    \midrule
    \multirow{3}{*}{100} & 10  & 16 &  97 & 62 &  97 & 0.90 & 0.55 & 0.2 \\
                         & 50  & 12 &  62 & 48 &  62 & 0.85 & 0.60 & 0.2 \\
                         & 100 & 11 &  56 & 42 &  56 & 0.85 & 0.45 & 0.0 \\
    \midrule
    \multirow{3}{*}{200} & 10  & 27 & 154 & 85 & 154 & 0.90 & 0.45 & 0.2 \\
                         & 50  & 20 & 105 & 71 & 105 & 0.85 & 0.40 & 0.0 \\
                         & 100 & 18 &  97 & 62 &  97 & 0.85 & 0.40 & 0.0 \\
    \bottomrule
    \end{tabular}
\end{table}

\begin{table}[H]
    \centering
    \caption{\footnotesize Benchmark policy costs for $d=50$ (standard errors in absolute terms).}\label{table:50d_benchmarks}
    \resizebox{0.95\textwidth}{!}
    {%
    \footnotesize
    \begin{tabular}{@{}llrrrrr@{}}
    \toprule
    \multicolumn{2}{c}{} & \multicolumn{5}{c}{Cost of Benchmark Policies} \\ 
    \cmidrule(lr){3-7}
    CV & $c_0$ & $(R,S)$ & $(Q,S)$ & $(R,Q,S)$ & Can-Order & Ind.\ $(s,S)$ \\
    \midrule
    \multirow{3}{*}{Poisson} 
        &  50 & $22065.98 \pm 1.07$ & $21977.50 \pm 1.00$ & $21975.75 \pm 0.99$ & $27351.58 \pm 1.85$ & $60972.63 \pm 3.92$ \\
        & 150 & $30771.78 \pm 1.46$ & $30681.17 \pm 1.49$ & $30683.31 \pm 1.44$ & $35449.80 \pm 2.79$ & $111031.35 \pm 8.93$ \\
        & 250 & $36567.00 \pm 1.73$ & $36449.72 \pm 1.83$ & $36449.11 \pm 1.84$ & $43682.24 \pm 3.94$ & $144903.04 \pm 13.29$ \\
    \midrule
    \multirow{3}{*}{0.5} 
        &  50 & $38893.13 \pm 4.33$ & $37686.67 \pm 3.93$ & $37697.07 \pm 4.01$ & $41624.19 \pm 4.64$ & $63620.70 \pm 5.01$ \\
        & 150 & $51055.26 \pm 6.36$ & $50038.41 \pm 5.96$ & $50031.13 \pm 5.88$ & $58295.25 \pm 7.02$ & $114526.42 \pm 10.37$ \\
        & 250 & $58619.23 \pm 7.58$ & $57671.54 \pm 7.29$ & $57671.69 \pm 7.24$ & $67810.36 \pm 9.95$ & $150188.01 \pm 14.55$ \\
    \midrule
    \multirow{3}{*}{1.0} 
        &  50 & $60063.26 \pm 10.35$ & $53205.38 \pm 8.68$ & $53205.39 \pm 8.68$ & $59257.45 \pm 9.19$ & $65854.80 \pm 8.50$ \\
        & 150 & $84009.31 \pm 14.81$ & $76252.62 \pm 13.14$ & $76257.13 \pm 13.16$ & $89009.34 \pm 15.04$ & $115911.52 \pm 15.17$ \\
        & 250 & $98012.81 \pm 18.87$ & $90460.97 \pm 16.49$ & $90470.66 \pm 16.49$ & $106554.54 \pm 20.40$ & $152478.11 \pm 20.29$ \\
    \bottomrule
    \end{tabular}
    }
\end{table}

\begin{table}[H]
    \centering
    \caption{\footnotesize Optimal policy parameters for $d=50$.}\label{table:50d_params}
    \footnotesize
    \begin{tabular}{@{}llcccccccc@{}}
    \toprule
    \multicolumn{2}{c}{} & $(R,S)$ & $(Q,S)$ & \multicolumn{2}{c}{$(R,Q,S)$} & Ind.\ $(s,S)$ & \multicolumn{2}{c}{Can-Order} \\ 
    \cmidrule(lr){3-3} \cmidrule(lr){4-4} \cmidrule(lr){5-6} \cmidrule(lr){7-7} \cmidrule(lr){8-9}
    CV & $c_0$ & $R^*$ & $Q^*$ & $R^*$ & $Q^*$ & $\alpha^*$ & $\alpha^*$ & $\omega^*$ \\
    \midrule
    \multirow{3}{*}{Poisson} 
        &  50 &  9 & 221 & 10 & 220 & 0.35 & 0.05 & 0.2 \\
        & 150 & 16 & 416 & 18 & 411 & 0.60 & 0.05 & 0.1 \\
        & 250 & 20 & 524 & 25 & 527 & 0.60 & 0.05 & 0.1 \\
    \midrule
    \multirow{3}{*}{0.5} 
        &  50 &  6 & 137 & 11 & 142 & 0.35 & 0.15 & 0.0 \\
        & 150 & 12 & 298 & 20 & 293 & 0.60 & 0.15 & 0.1 \\
        & 250 & 16 & 394 & 21 & 393 & 0.60 & 0.10 & 0.0 \\
    \midrule
    \multirow{3}{*}{1.0} 
        &  50 &  3 &  53 & 13 &  53 & 0.40 & 0.30 & 0.0 \\
        & 150 &  6 & 132 & 20 & 133 & 0.60 & 0.40 & 0.2 \\
        & 250 &  9 & 203 & 21 & 204 & 0.70 & 0.30 & 0.0 \\
    \bottomrule
    \end{tabular}
\end{table}

\section{Validation of Methodology in 1-Dimensional Setting with Diffusion Demand}\label{app:D}
{To validate our computational method, we consider the single-item inventory control problem studied in \cite{sulem1986solvable}. In this setting, replenishment is instantaneous, inventory levels are continuous and continuously reviewed, and cumulative demand follows the Brownian motion}
$$
D(t) = \mu t + \sigma B(t).
$$
We treat the infinite-horizon, discounted cost problem in one dimension. 
The evolution of the inventory level is described by the one-dimensional stochastic process $X^{u}(\cdot)$, with 
\begin{equation}
X^{u}(t) = x - D(t) + \sum_{\{j : \tau_j \leq t\}} y_j, 
\end{equation}
where $x$ is the initial inventory level, and $y_j$ is the amount ordered at time $\tau_j$. 
The goal is to minimize the expected total discounted cost, where costs are continuously discounted over time.
Putting this all together, the total expected discounted cost is given by
\begin{equation}\label{eq:discountedinventorycont}
{J(x, u) = \E_x\Big[\int_0^\infty e^{-r t} f(X^u(t)) dt + \sum_j e^{-r \tau_j} c(y_j)\Big]},
\end{equation}
where the cost functions $f(\cdot)$ and $c(\cdot)$ are as defined in Section~\ref{sec:mathmodel}.

The problem parameters for the one-dimensional test problem are listed in Table~\ref{tab:lowtestbed}. 
The hyperparameter configuration for this example was determined empirically. 
The terminal time is $T=0.8$, and the number of time intervals is $N=200$. 
The batch size is $K=1250$. For the neural-network architecture, the hidden layers have width 250, 
and we use four hidden layers with ELU activation functions in this one-dimensional example. 
During training, we use learning rates $10^{-3}$, $10^{-4}$, and $10^{-5}$ consecutively, each for 5000 iterations, 
to achieve high accuracy. The penalties are $10^2$, $10^4$, and $2\cdot10^5$, also for 5000 iterations each, 
and the cost scaling parameter is set to $\kappa=1.0$, {i.e., no scaling}. 
The primitives of the reference policy are $\lambda=0.4$ {and $\Phi$ is lognormal with mean 2.0 and variance 1.0.} 
Figure~\ref{fig:value-gradient} shows that the neural-network approximations determined via our numerical procedure 
are close to the exact solutions for the optimal value function and its derivative in the one-dimensional setting. 
{The exact solutions are computed using the expressions in \cite{sulem1986solvable}.}

\begin{table}[H]
\centering
\caption{Problem parameters for one-dimensional example with diffusion demand}
\label{tab:lowtestbed}
\begin{tabular}{ccccccc}
\toprule
$r$ & $p$ & $h$ & $c$ & $c_0$ & $\mu$ & $\sigma$ \\
\midrule
0.05 & 2 & 0.5 & 1 & 1.5 & 1 & 0.2  \\
\bottomrule
\end{tabular}
\end{table}

\begin{figure}[H]
  \centering
  \begin{subfigure}[t]{0.49\linewidth}
    \centering
    \includegraphics[width=\linewidth]{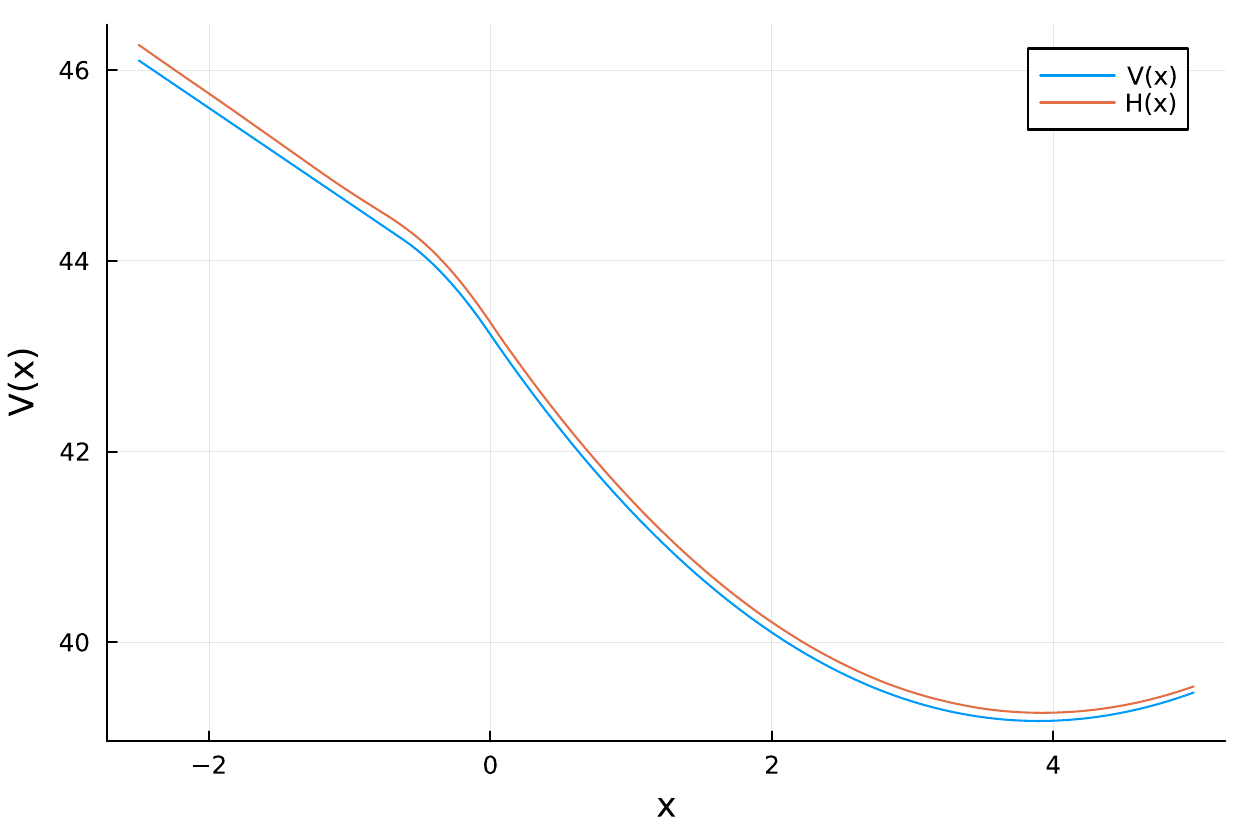}%
    \caption{$V(x)$ and $H_\theta(x)$}
    \label{fig:value}
  \end{subfigure}\hfill
  \begin{subfigure}[t]{0.49\linewidth}
    \centering
    \includegraphics[width=\linewidth]{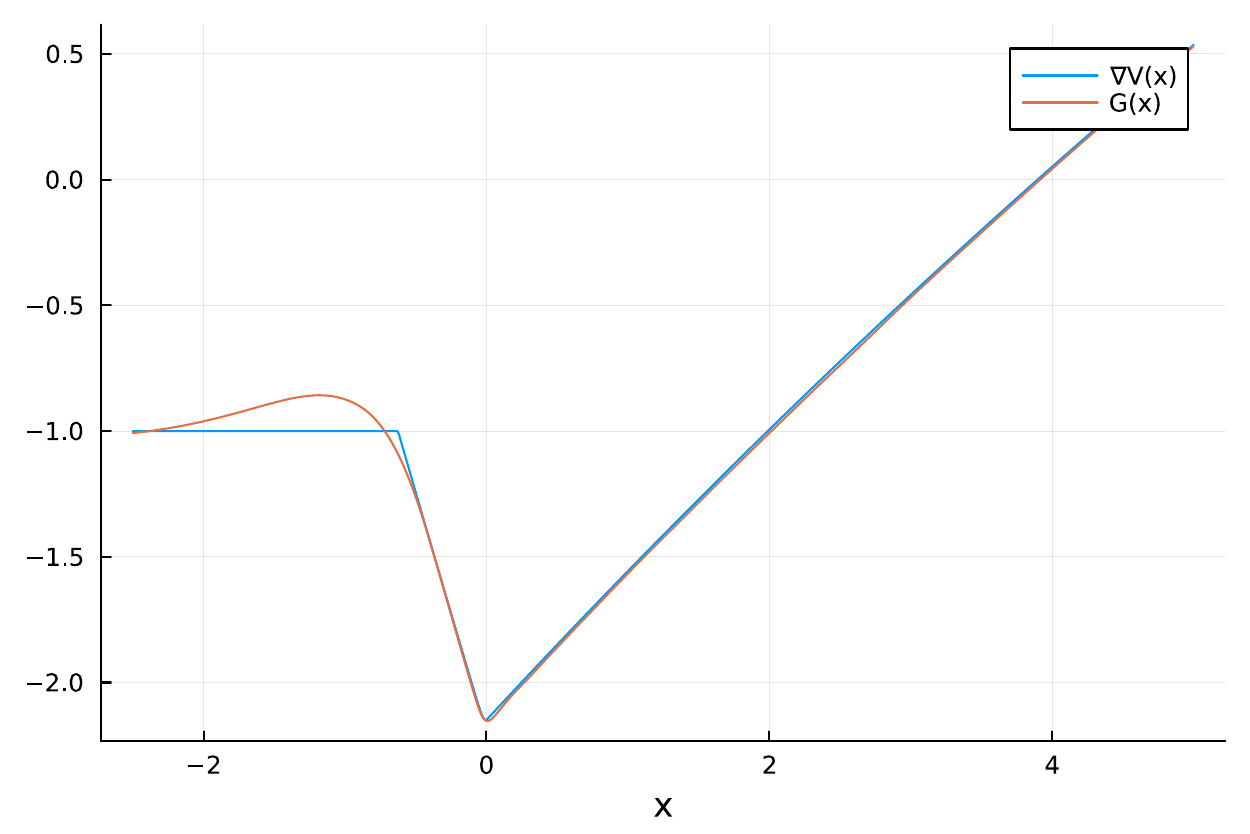}%
    \caption{$\nabla V(x)$ and $G_\vartheta(x)$}
    \label{fig:grad}
  \end{subfigure}
  \caption{Value function and gradient computed using the expressions in \cite{sulem1986solvable}, and the neural network approximations}
  \label{fig:value-gradient}
\end{figure}

\end{appendix}

\end{document}